\theoremstyle{plain}
\newtheorem{proposition}{Proposition}[section]
\newtheorem{theorem}[proposition]{Theorem}
\newtheorem{corollary}[proposition]{Corollary}
\newtheorem{lemma}[proposition]{Lemma}
\newtheorem{definition}[proposition]{Definition}
\theoremstyle{definition}
\newcommand{\vnten}{{\overline{\otimes}}}
\newcommand{\ip}[2]{{\langle {#1} , {#2} \rangle}}
\newcommand{\mc}{\mathcal}
\newcommand{\mf}{\mathfrak}
\newcommand{\G}{\mathbb{G}}
\newcommand{\supp}{\operatorname{supp}}
\newcommand{\op}{{\operatorname{op}}}
\newcommand{\lin}{\operatorname{lin}}
\begin{document}

\title{Isometries between quantum convolution algebras}
\author{Matthew Daws, Hung Le Pham}

\maketitle

\begin{abstract}
Given locally compact quantum groups $\G_1$ and $\G_2$, we show that if the
convolution algebras $L^1(\G_1)$ and $L^1(\G_2)$ are isometrically isomorphic
as algebras, then $\G_1$ is isomorphic either to $\G_2$ or the commutant $\G_2'$.
Furthermore, given an isometric algebra isomorphism $\theta:L^1(\G_2)
\rightarrow L^1(\G_1)$, the adjoint is a $*$-isomorphism between $L^\infty(\G_1)$
and either $L^\infty(\G_2)$ or its commutant, composed with a twist given by a
member of the intrinsic group of $L^\infty(\G_2)$.  This extends known results
for Kac algebras (although our proofs are somewhat different)
which in turn generalised classical results of Wendel and
Walter.  We show that the same result holds for isometric algebra homomorphisms
between quantum measure algebras (either reduced or universal).  We make some
remarks about the intrinsic groups of the enveloping von Neumann algebras of
C$^*$-algebraic quantum groups.

MSC classification: 16T20, 20G42, 22D99, 46L89, 81R50  (Primary);
46L07, 46L10, 46L51  (Secondary).

Keywords: Locally compact quantum group, isometric isomorphism, intrinsic group.
\end{abstract}

\section{Introduction}

Locally compact quantum groups generalise Kac algebras, and form an
abstract generalisation of Pontryagin duality.  For a locally compact quantum
group $\G$, we shall write $L^\infty(\G)$ for the von Neumann algebraic quantum
group, and $C_0(\G)$ for the (reduced) C$^*$-algebraic quantum group.  As one
can move between these algebras, we tend to view them as representing the same
object $\G$.  Let $L^1(\G)$ be the ``quantum convolution algebra'', which is
the predual of $L^\infty(\G)$, made into a Banach algebra by using the
coproduct.  We can alternatively identify $L^1(\G)$ as a certain closed ideal
in $C_0(\G)^*$.  Notice that even in the classical case, where $G$ is even
an abelian locally compact group, the algebra $L^1(G)$ does not determine $G$,
as if $G$ is finite, then $L^1(G)$ is isomorphic to $C(\hat G)$, the continuous
functions on the dual group $\hat G$, and so $L^1(G)$ is isomorphic to $L^1(H)$
if and only if $\hat G$ and $\hat H$ are of the same cardinality.

However, Wendel's theorem \cite{wen} shows that if we take the norm into
account, then $L^1(G)$ completely determines $G$.  To be precise, if $\theta:
L^1(G_2)\rightarrow L^1(G_1)$ is an \emph{isometric} algebra isomorphism, then
there is a character $\chi$ on $G_1$, a positive constant $c>0$,
and a continuous group homomorphism $\alpha:G_1\rightarrow G_2$ such that
$\theta(f)(s) = c \chi(s) f(\alpha(s))$ almost everywhere for $s\in G_1$.
The constant $c$ simply reflects the fact that the Haar measure is only unique
up to a constant.  This was generalised to Fourier algebras by Walter,
\cite{wal}: here notice that $A(G)$ and $A(G^\op)$ are also isometrically
isomorphic, where $G^\op$ is the opposite group to $G$, and indeed Walter's
theorem shows (amongst other things) that $A(G_1)$ and $A(G_2)$ are
isometrically isomorphic if and only if $G_1$ is isomorphic to either $G_2$
or $G_2^\op$.

The Kac algebra case was shown by De Canni{\`e}re, Enock and Schwartz in
\cite{des} (see also \cite{es}).  The proof in the Kac algebra case uses that
the antipode is bounded, which is no longer true in the locally compact quantum
group case.  We
instead use a characterisation of the unitary antipode through the Haar weight
(see \cite[Proposition~5.20]{kv} and Section~\ref{sec:ua} below).
The intuitive idea is to show that an isometric algebra isomorphism must
intertwine the unitary antipode, although our actual argument is slightly
indirect.

Our principle result is that when $\theta:L^1(\G_2)\rightarrow L^1(\G_1)$ is
an isometric algebra isomorphism, then there is $u$, a member of the intrinsic
group of $L^\infty(\G_2)$, such that $x\mapsto \theta^*(x)u$ is either a
$*$-isomorphism, or an anti-$*$-isomorphism, from $L^\infty(\G_1)$ to
$L^\infty(\G_2)$.  We briefly study the intrinsic group, and prove that it coincides
with the collection of characters of $L^1(\G_2)$, as we expect from Wendel's
Theorem.  An anti-$*$-isomorphism to $L^\infty(\G_2)$ can be converted to
a $*$-isomorphism to the commutant $L^\infty(\G_2)'$ by composing with
$x\mapsto Jx^*J$; the possibility of an anti-$*$-isomorphism occurring can
of course already be seen in Walter's Theorem.  In particular, if
$L^1(\G_1)$ and $L^1(\G_2)$ are isometrically isomorphic, then $\G_1$ is
isomorphic to either $\G_2$ or $\G_2'$.  We can easily remove the
possibility of $\G_2'$ occurring by restricting to completely isometric
(or even just completely contractive)
isomorphisms between $L^1(\G_1)$ and $L^1(\G_2)$, see Section~\ref{sec:cccase}.

Having established the result for $L^1$ algebras, we can prove similar results
for quantum measure algebras-- for example, for isometric algebra isomorphisms
between the dual spaces $C_0(\G_2)^*$ and $C_0(\G_1)^*$.  Indeed, we work
with some generality, and look at C$^*$-bialgebras $(A,\Delta)$ which admit
a surjection $\pi:A\rightarrow C_0(\G)$ which intertwines the coproduct,
and such that $\pi^*$ identifies $L^1(\G)$ as an ideal in $A^*$.  This
includes the reduced and universal C$^*$-algebraic quantum groups associated
with $\G$.  As in the Kac algebra case, we use order properties of $A^{**}$
to determine $L^1(\G)$ inside $A^*$.  Our characterisation of such isometric
isomorphisms involves the intrinsic group of $A^{**}$, but we show that this
is always canonically isomorphic to the intrinsic group of $L^\infty(\G)$.
We finish to showing how, in some sense, the picture becomes clearer by
embedding everything into $L^\infty(\hat\G)$, and here the interaction
between multipliers and the antipode becomes important
(compare with \cite{daws1}).

\subsection{Acknowledgements}

The second named author would like to acknowledge the financial support of the
Marsden Fund (the Royal Society of New Zealand).

\section{Locally compact quantum groups}

We give a quick overview of the theory of locally compact quantum groups.
For readable introductions, see \cite{kusbook} or \cite{vaes}.  Our main
reference is \cite{kv}, which is a self-contained account of the C$^*$-algebraic
approach to locally compact quantum groups.  We shall however mainly work with
von Neumann algebras, for which see \cite{kusvn}.  However, this paper is not
self-contained, and should be read in conjunction with \cite{kv}.  Indeed, in
a number of places, we shall reference \cite{kv}, where really we need the
obvious von Neumann algebraic version of the required result.
See also \cite{mas} and \cite{vd}
for the C$^*$-algebraic and von Neumann algebraic approaches, respectively.

A Hopf-von Neumann algebra is a pair $(M,\Delta)$ where $M$ is a von Neumann
algebra and $\Delta:M\rightarrow M\vnten M$ is a unital norm $*$-homomorphism
which is coassociative: $(\iota\otimes\Delta)\Delta
= (\Delta\otimes\iota)\Delta$.  Then $\Delta$ induces a Banach algebra product
on the predual $M_*$.  We shall write the product in $M_*$ by juxtaposition, so
\[ \ip{x}{\omega\omega'} = \ip{\Delta(x)}{\omega\otimes\omega'}
\qquad (x\in M, \omega,\omega'\in M_*). \]
Similarly, the module actions of $M$ on $M_*$ will be denoted by
juxtaposition.

Recall the notion of a normal semi-finite faithful weight $\varphi$ on $M$
(see \cite[Chapter~VII]{tak2} for example).  We let
\[ \mf n_\varphi = \{ x\in M : \varphi(x^*x)<\infty \}, \quad
\mf m_\varphi = \lin\{ x^*y : x,y\in\mf n_\varphi \}, \quad
\mf m_\varphi^+ = \{ x\in M^+ : \varphi(x)<\infty \}. \]
Then $\mf m_\varphi$ is a hereditary $*$-subalgebra of $M$, $\mf n_\varphi$
is a left ideal, and $\mf m_\varphi^+$ is indeed $M^+ \cap \mf m_\varphi$.
We can perform the GNS construction for $\varphi$, which leads to a
Hilbert space $H$, a dense range map $\Lambda:\mf n_\varphi\rightarrow H$
and a unital normal $*$-representation $\pi:M\rightarrow\mc B(H)$ with
$\pi(x)\Lambda(y)=\Lambda(xy)$.  In future, we shall tend to suppress $\pi$.
Then $\Lambda(\mf n_\varphi \cap \mf n_\varphi^*)$ is full left Hilbert algebra,
and this contains a maximal Tomita algebra
(see \cite[Section~2, Chapter~VI]{tak2}); denote by $\mc T_\varphi\subseteq
\mf n_\varphi\cap\mf n_\varphi^*$ the inverse image under $\Lambda$ of this
maximal Tomita algebra.  Tomita-Takesaki theory gives us the modular
conjugation $J$ and the modular automorphism group $(\sigma_t)$.  Then
$\mc T_\varphi$ is a $*$-algebra, dense in $M$ for the $\sigma$-weak topology,
all of whose elements are analytic for $(\sigma_t)$.

A von Neumann algebraic quantum group is a Hopf-von Neumann algebra
$(M,\Delta)$ together with faithful normal semifinite weights $\varphi,\psi$
which are left and right invariant, respectively.  This means that
\[ \varphi\big( (\omega\otimes\iota)\Delta(x) \big)
= \varphi(x)\ip{1}{\omega}, \quad
\psi\big( (\iota\otimes\omega)\Delta(y) \big)
= \psi(y)\ip{1}{\omega}
\qquad (\omega\in M_*^+, x\in \mf m_{\varphi}^+, y\in\mf m_\psi^+). \]
Using these weights, we can construct an antipode $S$, which will in
general be unbounded.  Then $S$ has a decomposition $S = R \tau_{-i/2}$,
where $R$ is the unitary antipode, and $(\tau_t)$ is the scaling group.
The unitary antipode $R$ is a normal anti-$*$-automorphism of $M$, and
$\Delta R = \sigma(R\otimes R)\Delta$, where $\sigma:M\vnten M\rightarrow
M\vnten M$ is the tensor swap map.  As $R$ is normal, it drops to an
isometric linear map $R_*:M_*\rightarrow M_*$, which is
anti-multiplicative.  As usual, we make the canonical
choice that $\varphi = \psi\circ R$.

Let $H$ be the GNS space of $\varphi$, and let $\Lambda:\mf n_\varphi
\rightarrow H$ be the GNS map.  There is a unitary $W$, the fundamental
unitary, acting on $H\otimes H$ (the Hilbert space tensor product) such that
$\Delta(x) = W^*(1\otimes x)W$ for $x\in M$.  The left-regular representation
of $M_*$ is the map $\lambda:\omega\mapsto(\omega\otimes\iota)(W)$.  This
is a homomorphism, and the $\sigma$-weak closure of $\lambda(M_*)$ is
a von Neumann algebra $\hat M$.  We define a coproduct $\hat\Delta$ on
$\hat M$ by $\hat\Delta(x) = \hat W^*(1\otimes x)\hat W$, where $\hat W
= \Sigma W^* \Sigma$ (here $\Sigma:H\otimes H\rightarrow H\otimes H$ is the
swap map).  Then we can find invariant weights to turn $(\hat M,\hat\Lambda)$
into a locally compact quantum group-- the dual group to $M$.
We have the biduality theorem that $\hat{\hat M} = M$ canonically.

As is becoming common, we shall write $\G$ for the abstract ``object''
to be thought of as a locally compact quantum group.  We then write
$L^\infty(\G)$ for $M$, $L^1(\G)$ for $M_*$, and $L^2(\G)$ for $H$.
In this paper, we shall
often have two quantum groups $\G_1$ and $\G_2$.  Then we shall denote by
$S_i$ the antipode of $\G_i$, for $i=1,2$, and similarly for $R_i$, $\psi_i$,
and so forth.

There is of course a parallel C$^*$-algebraic theory, but we shall introduce
this below in Section~\ref{sec:cstar}.

\subsection{Isomorphisms of quantum groups}

\begin{definition}
A \emph{quantum group isomorphism} between $\G_1$ and $\G_2$ is a normal
$*$-isomorphism $\theta:L^\infty(\G_1)\rightarrow L^\infty(\G_2)$
which intertwines the coproducts.
\end{definition}

Suppose we have a $*$-isomorphism $\theta:L^\infty(\G_1)\rightarrow
L^\infty(\G_2)$ which intertwines the coproducts.
Then, arguing as in \cite[Proposition~5.45]{kv}, $\theta$ must intertwine the
antipode, the unitary antipode, and the scaling group.  As the Haar weights are
unique up to a constant, we may actually choose the weights to be intertwined
by $\theta$.  Hence every object associated to $\G_1$ is transfered to $\G_2$
by $\theta$.  

\begin{definition}
A \emph{quantum group commutant isomorphism} between $\G_1$ and $\G_2$
is a normal anti-$*$-isomorphism $\theta:L^\infty(\G_1)\rightarrow
L^\infty(\G_2)$ which intertwines the coproducts.
\end{definition}

The \emph{commutant} von Neumann algebraic quantum group
to $\G$ is $\G'$, which has $L^\infty(\G') = L^\infty(\G)'$, the commutant
of $L^\infty(\G)$, and $\Delta'(x) = (J\otimes J)\Delta(JxJ)(J\otimes J)$,
for $x\in L^\infty(\G)'$.  All the other objects (such as $W', R', \varphi'$)
associated to $\G'$ can be related to those of $\G$ using the modular conjugation
operator $J$.  See \cite[Section~4]{kusvn} for further details.  Then, if
$\theta:L^\infty(\G_1)\rightarrow L^\infty(\G_2)$ is a commutant isomorphism,
then $\theta'(x) = J\theta(x)^*J$ defines a quantum group isomorphism from
$\G_1$ to $\G_2'$; this motivates our choice of terminology.  Notice that if
$\G_2$ is commutative, then $\G_2' = \G_2$; thus we have avoided the
terminology ``quantum group anti-isomorphism'', as this would be misleading
in the motivating commutative situation.

\section{Isometries of convolution algebras}\label{sec:ell1case}

Throughout this section, fix two locally compact quantum groups $\G_1$
and $\G_2$, and let $T_*:L^1(\G_2) \rightarrow L^1(\G_1)$ be a linear bijective
isometry which is an algebra homomorphism (in short, $T_*$ is an isometric
algebra isomorphism).

Then $T=(T_*)^*:L^\infty(\G_1) \rightarrow L^\infty(\G_2)$ is a bijective linear
isometry between von Neumann algebras.  Kadison studied such maps in
\cite{kad} (see also \cite[Section~5.4]{es})
where it is shown that $T(1)$ is a unitary in $L^\infty(\G_2)$
and the map $T_1:x\mapsto T(x)T(1)^*$ is a Jordan $*$-homomorphism.  That is,
\[ T_1(x)^* = T_1(x^*), \qquad
T_1(xy+yx) = T_1(x)T_1(y) + T_1(y)T_1(x) \qquad (x,y\in L^\infty(\G_1)). \]

In our situation, we can say more about the unitary $T(1)$.

\begin{definition}
Let $\G=(M,\Delta)$ be a Hopf-von Neumann algebra.  The \emph{intrinsic
group} of $\G$ is the collection of unitaries $u\in M$ with
$\Delta(u)=u\otimes u$.
\end{definition}

Recall that a character on a Banach algebra is a non-zero multiplicative
functional.  The following is more than we need, but is of independent
interest; it generalises \cite[Theorem~3.6.10]{es} (which again makes
extensive use of a bounded antipode for a Kac algebra).  Recall that
$M(C_0(\G))$ is the multiplier algebra of $C_0(\G)$; for further details
see Section~\ref{sec:cstar} below.

\begin{theorem}\label{prop:one}
Let $\G=(M,\Delta)$ be a Hopf-von Neumann algebra.
For $x\in M$, the following are equivalent:
\begin{enumerate}
\item\label{prop:onea} $x$ is a character of the Banach algebra $M_*$;
\item\label{prop:oneb} $x\not=0$ and $\Delta(x)=x\otimes x$.
\end{enumerate}
If $\G$ is a locally compact quantum group, then a character
$x\in L^\infty(\G)$ is a unitary, and so automatically $x$ is a member
of the intrinsic group of $\G$.  Furthermore, $x\in M(C_0(\G))$ and
$x\in D(S)$ with $S(x)=x^*$.  The maps
\[ L^1(\G)\rightarrow L^1(\G); \omega\mapsto \omega x \ , \
x\omega \]
are isometric automorphisms of the algebra $L^1(\G)$.
\end{theorem}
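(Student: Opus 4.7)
I would first establish the equivalence (a)$\Leftrightarrow$(b) by direct unwrapping. The multiplicativity of $x$ as a functional, $\langle x,\omega\omega'\rangle=\langle x,\omega\rangle\langle x,\omega'\rangle$, translates via the definition of the product on $M_*$ to $\langle\Delta(x)-x\otimes x,\omega\otimes\omega'\rangle=0$ for all $\omega,\omega'\in M_*$, which by density of elementary tensors in $(M\vnten M)_*$ forces $\Delta(x)=x\otimes x$; the non-vanishing of $x$ corresponds to the character being non-trivial.

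For the LCQG half, the plan begins with the norm computation: since $\Delta$ is an injective (hence isometric) unital normal $*$-homomorphism, $\|x\|=\|\Delta(x)\|=\|x\otimes x\|=\|x\|^2$, forcing $\|x\|=1$. The central step is proving $x$ is unitary, and I would do this by producing a two-sided inverse $x^{-1}\in M$ via the antipode. Concretely, I would invoke the strong left invariance characterisation of $S$ (\cite[Proposition~5.33]{kv} together with the von Neumann analogue cited in Section~\ref{sec:ua}), which places $(\iota\otimes\omega)\Delta(x)=\omega(x)\,x$ into $D(S)$ for $\omega$ in a suitable core; choosing any $\omega$ with $\omega(x)\neq 0$ then puts $x$ itself in $D(S)$. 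The Hopf-algebraic antipode identity applied to the legs of $\Delta(x)=x\otimes x$ yields $x\cdot S(x)=S(x)\cdot x=1$. Hence $x^{-1}=S(x)\in M$ satisfies $\Delta(x^{-1})=x^{-1}\otimes x^{-1}$, making $x^{-1}$ another character, so $\|x^{-1}\|\leq 1$; combined with $\|x\|\|x^{-1}\|\geq\|xx^{-1}\|=1$ this forces $\|x^{-1}\|=1$. The standard C$^*$-algebraic fact that an invertible element with $\|x\|=\|x^{-1}\|=1$ has $\sigma(x^*x)\subseteq\{1\}$, hence $x^*x=xx^*=1$, now delivers unitarity, $S(x)=x^{-1}=x^*$, and membership in the intrinsic group. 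The multiplier assertion $x\in M(C_0(\G))$ I would deduce from the slice relation $(1\otimes x)W=W(x\otimes x)$ (equivalent to $\Delta(x)=W^*(1\otimes x)W=x\otimes x$), which shows that $x$ multiplies the norm-dense set of second-leg slices of $W$ back into $C_0(\G)$.

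For the final claim, the verification is direct via pairing with $y\in M$:
\[
\langle y,(\omega_1 x)(\omega_2 x)\rangle
=\langle (x\otimes x)\Delta(y),\omega_1\otimes\omega_2\rangle
=\langle \Delta(xy),\omega_1\otimes\omega_2\rangle
=\langle y,(\omega_1\omega_2)x\rangle,
\]
so $\omega\mapsto\omega x$ is an algebra endomorphism of $L^1(\G)$; its two-sided inverse is the analogous map $\omega\mapsto\omega x^*$ (since $(\omega x)x^*=\omega(xx^*)=\omega$), and isometry follows from $\|x\|=\|x^*\|=1$ combined with the two-sided nature of this inverse. The symmetric argument handles $\omega\mapsto x\omega$.

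The hardest step will be showing $x\in D(S)$ with $S(x)x=xS(x)=1$: the absence of a counit at the von~Neumann level forces one to substitute the strong invariance of the Haar weight, which is precisely where the Kac algebra proof of \cite{des} (relying on a bounded antipode) no longer applies directly. Once this invertibility is established, the C$^*$-algebraic principle about norm-one inverses quickly delivers unitarity, and the remaining assertions follow routinely.
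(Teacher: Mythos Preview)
Your argument for (a)$\Leftrightarrow$(b) and for the final automorphism claim matches the paper's. The substantive divergence is in how unitarity of a group-like $x$ is obtained, and here your plan has a genuine gap.

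The paper does \emph{not} reach unitarity through the antipode. Instead it first proves the auxiliary statement: if $y\in L^\infty(\G)^+$ is nonzero with $\Delta(y)=y\otimes y$, then $y=1$. This is done by Borel functional calculus: for each $r\in[0,1]$ the spectral projection $p=\chi_{[r,1]}(y)$ satisfies $\Delta(p)=\chi_{[r,1]}(y\otimes y)\le p\otimes 1$, and then \cite[Lemma~6.4]{kv} forces $p\in\{0,1\}$; varying $r$ gives $y=1$. Applying this to $y=x^*x$ and $y=xx^*$ yields unitarity. Only \emph{after} this does the paper invoke \cite[Proposition~5.33]{kv}, feeding in the identities $1\otimes x=(x^*\otimes 1)\Delta(x)$ and $1\otimes x^*=\Delta(x^*)(x\otimes 1)$---identities which already require $x^*x=1$---to conclude $x\in D(S)$ and $S(x)=x^*$.

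Your route reverses this order, and that is where it breaks. First, neither \cite[Proposition~5.33]{kv} nor the strong-invariance formula of Section~\ref{sec:ua} places $x$ in $D(S)$ without further input: the strong-invariance identities apply to elements of $\mf n_\psi$ or $\mc T_\psi$, and a group-like element (take $x=1$) is typically not square-integrable; while the version of Proposition~5.33 actually used in the paper takes as hypothesis precisely the relations that presuppose $x^*x=1$. Second, even granting $x\in D(S)$, your appeal to ``the Hopf-algebraic antipode identity'' $xS(x)=S(x)x=1$ is unjustified: that identity is $m(\iota\otimes S)\Delta=\eta\epsilon$, and as you yourself note there is no counit on $L^\infty(\G)$. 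Asserting that strong invariance ``substitutes'' for the counit does not by itself produce the equation $xS(x)=1$; you would need to exhibit a concrete weight identity that collapses to it for group-like $x$, and none of the cited results does so directly. The paper's functional-calculus route via \cite[Lemma~6.4]{kv} sidesteps the whole issue by establishing unitarity first, after which both $S(x)=x^*$ and the multiplier assertion follow cleanly.
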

\begin{proof}
The equivalence of (\ref{prop:onea}) and (\ref{prop:oneb}) is 
an easy calculation.

Suppose that $\G$ is a locally compact quantum group, and $x\not=0$ is
such that $\Delta(x)=x\otimes x$.  Suppose also that $x\geq0$; we shall
prove that $x=1$.  The von Neumann algebra which $x$ generates is abelian,
and so isomorphic to $L^\infty(K)$ for some measure space $K$.  Let
$\tilde x$ be the image of $x$ in $L^\infty(K)$.  We note that as
$\|x\| = \|\Delta(x)\| = \|x\otimes x\| = \|x\|^2$, necessarily $\|x\|=1$.

Let $r\in [0,1]$, and using the Borel functional calculus, let $p =
\chi_{[r,1]}(x)$.  Thus $\tilde p$ is the indicator function of the set
$\{ k\in K : \tilde x(k) \geq r \}$.  The von Neumann algebra generated by
$x\otimes x$ embeds into $L^\infty(K\times K)$ by sending $x\otimes x$ to
$\tilde x\otimes \tilde x$,
which is just the function $(k,l)\mapsto \tilde x(k)\tilde x(l)$.
Then $\chi_{[r,1]}(\tilde x\otimes\tilde x)$ is the indicator function of
the set $\{(k,l)\in K\times K : \tilde x(k)\tilde x(l)\geq r \}$.
Thus, if $\chi_{[r,1]}(\tilde x\otimes\tilde x)(k,l)=1$ then
$\tilde x(k)\tilde x(l)\geq r$ so certainly $\tilde x(k)\geq r$ (as
$\|\tilde x\|=1$) and so $(\tilde p\otimes 1)(k,l)=1$.  It follows that
\[ \chi_{[r,1]}(\tilde x\otimes\tilde x) \leq \tilde p\otimes 1. \]

By the homomorphism property of the Borel functional calculus,
\[ \Delta(p) = \chi_{[r,1]}(\Delta(x)) = \chi_{[r,1]}(x \otimes x)
\leq p \otimes 1. \]
However, we can now appeal to \cite[Lemma~6.4]{kv} to conclude that
$p=0$ or $p=1$ (as an aside on notation, $\tilde A$ as used in $\cite{kv}$
is simply $L^\infty(\G)$, see \cite[Page~874]{kv}).
So, we have that $\chi_{[r,1]}(x)=1$ or $0$ for every $r\in [0,1]$.
It follows that $x=1$.

Now let $x\in L^\infty(\G)$ be non-zero with $\Delta(x)=x\otimes x$.
As $\Delta$ is a $*$-homomorphism, it follows that $\Delta(x^*x)=
x^*x\otimes x^*x$, and so from the previous paragraph, $x^*x=1$.
Similarly, $xx^*=1$, so $x$ is a unitary, as required.

Then
\[ 1\otimes x = (x^*\otimes 1)\Delta(x), \qquad
1\otimes x^* = \Delta(x^*)(x\otimes 1), \]
and so from (the von Neumann algebraic analogue of)
\cite[Proposition~5.33]{kv} we conclude that $x\in D(S)$ with $S(x)=x^*$.
To show that $x$ is a multiplier of $C_0(\G)$, we adapt an idea from
\cite[Section~4]{woro}, which in turn is inspired by \cite[Page~441]{bs}.
We have that $W\in M(C_0(\G) \otimes \mc K(L^2(\G)))$, where
$\mc K(L^2(\G))$ is the compact operators on $L^2(\G)$, see
\cite[Section~3.4]{kv} or compare \cite[Theorem~1.5]{woro}.  Then
\[ x\otimes 1 = (1\otimes x^*) \Delta(x) =
(1\otimes x^*)W^*(1\otimes x)W \in M(C_0(\G) \otimes \mc K(L^2(\G))), \]
and so $x\in M(C_0(\G))$ as required.

Finally, for $\omega,\omega'\in L^1(\G)$, we see that
\[ \ip{y}{(\omega\omega')x}
= \ip{(x\otimes x)\Delta(y)}{\omega\otimes\omega'}
= \ip{y}{(\omega x)(\omega' x)} \qquad (y\in L^\infty(\G)), \]
so the map $\omega\mapsto\omega x$ is an algebra homomorphism, with
inverse $\omega\mapsto\omega x^*$.  The case of $\omega\mapsto x\omega$
is analogous.
\end{proof}

We remark that similar results to the above theorem have been obtained
independently by Neufang and Kalantar, see Kalantar's thesis,
\cite[Theorem~3.2.11]{kt} and \cite[Theorem~3.9]{kn}.

We hence see that if $T_*:L^1(\G_2)\rightarrow L^1(\G_1)$ is an
isometric algebra isomorphism, then so is $T_{1,*}:\omega\mapsto
T_*(T(1)^*\omega)$.  For the rest of this section, we shall just
assume that actually $T(1)=1$.

Let $p\in L^\infty(\G_2)$ be a central projection, and let $T_p$ be the
map $x\mapsto T(x)p$.  As in \cite[Section~5.4]{es}, we define
\begin{align*} \mc P_h &= \big\{ p\text{ a central projection in }
L^\infty(\G_2) \text{ with }T_p\text{ an algebra homomorphism}\big\}, \\
\mc P_a &= \big\{ p\text{ a central projection in }L^\infty(\G_2)
\text{ with }T_p\text{ an algebra anti-homomorphism	}\big\}. \end{align*}
Then \cite[Lemma~5.4.5]{es} shows that both $\mc P_a$ and $\mc P_h$
have greatest elements, say $s_a$ and $s_h$.  From \cite[Theorem~3.3]{sto},
there is some $p\in\mc P_a$ with $1-p\in\mc P_h$, and so $s_a + s_h\geq 1$.

The following results are also shown in \cite{es}, but we give
sketch proofs to verify that the results still hold for locally compact
quantum groups.

\begin{lemma}
Let $x\in L^\infty(\G)$ be a central projection with $\Delta(x)\geq x\otimes x$
and $R(x)=x$.  Then $W(x\otimes x) = (x\otimes x)W$ and $\Delta(x)(x\otimes 1)
= \Delta(x)(1\otimes x) = x\otimes x$.
\end{lemma}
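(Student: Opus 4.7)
\emph{Plan.} My strategy is to translate $\Delta(x) \geq x \otimes x$ into a condition on $W$ via $\Delta(x) = W^*(1 \otimes x)W$, then use the centrality of $x$ together with $R(x) = x$ to conclude $W(x \otimes x) = (x \otimes x)W$; the identities $\Delta(x)(x \otimes 1) = \Delta(x)(1 \otimes x) = x \otimes x$ will then follow directly.

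Since $\Delta(x)$ and $x \otimes x$ are projections with $\Delta(x) \geq x \otimes x$, the equality $\Delta(x)(x \otimes x) = x \otimes x$ holds; via $\Delta(x) = W^*(1 \otimes x)W$ this reads
\[ (1 \otimes (1-x))W(x \otimes x) = 0. \qquad (\star) \]
Because $x$ is central in $L^\infty(\G)$ and $W \in L^\infty(\G) \vnten L^\infty(\hat\G)$, the element $x \otimes 1$ is central in the ambient von Neumann algebra and so commutes with $W$. Hence $W(x \otimes x) = (x \otimes 1)W(1 \otimes x) = (x \otimes x)W(1 \otimes x)$, the last step invoking $(\star)$.

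The crux is to derive the mirror identity $(x \otimes x)W(1 \otimes (1-x)) = 0$ from $R(x) = x$. The implementation $R(y) = \hat J y^* \hat J$ together with $x = x^*$ yields $\hat J x \hat J = x$, and the centrality of $x$ gives $J x J = x$; combined with the standard identity $(\hat J \otimes J)W(\hat J \otimes J) = W^*$, conjugation of $(\star)$ by $\hat J \otimes J$ produces $(1 \otimes (1-x))W^*(x \otimes x) = 0$, whose adjoint is the mirror identity.

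With both identities in hand, $W(x \otimes x) = (x \otimes x)W(1 \otimes x) = (x \otimes x)W$, which proves the first claim. For the second, commuting $x \otimes 1$ past $W$ gives
\[ \Delta(x)(x \otimes 1) = W^*(1 \otimes x)(x \otimes 1)W = W^*(x \otimes x)W = (x \otimes x)W^*W = x \otimes x, \]
and $\Delta(x)(1 \otimes x) = x \otimes x$ follows by applying the flip, since $R(x) = x$ combined with $(R \otimes R)\Delta = \sigma \Delta R$ forces $\Delta(x) = \sigma \Delta(x)$. The main obstacle is the mirror identity: the $\sigma$-symmetry of $\Delta(x)$ alone merely reproduces $(\star)$ after translation via $\hat W = \Sigma W^* \Sigma$, so one must bring in the modular conjugations (or some equivalent strong form of $R$-invariance) in order to effectively swap $W$ with $W^*$.
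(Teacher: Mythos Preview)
Your argument for $W(x\otimes x)=(x\otimes x)W$ and for $\Delta(x)(x\otimes 1)=x\otimes x$ is correct and is essentially the paper's proof, just reorganised: both use $\Delta(x)(x\otimes x)=x\otimes x$ rewritten via $W$, centrality of $x$ to commute $x\otimes 1$ past $W$, and the identity $(\hat J\otimes J)W(\hat J\otimes J)=W^*$ (together with $\hat JxJ\hat{}=JxJ=x$) to pass from $W$ to $W^*$.

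There is, however, a genuine error in your final step. From $R(x)=x$ and $(R\otimes R)\Delta=\sigma\Delta R$ you only obtain $(R\otimes R)\Delta(x)=\sigma\Delta(x)$, \emph{not} $\Delta(x)=\sigma\Delta(x)$. The latter is false in general: take $\G=G=S_3$ (so $L^\infty(G)$ is abelian and every projection is central) and let $x$ be the indicator of the subgroup $E=\{e,(12)\}$. Then $R(x)=x$ and $\Delta(x)\geq x\otimes x$, but $\Delta(x)(s,t)=\chi_E(st)$ is not symmetric in $(s,t)$ since, e.g., $(123)(13)=(12)\in E$ while $(13)(123)=(23)\notin E$. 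So you cannot deduce $\Delta(x)(1\otimes x)=x\otimes x$ by ``applying the flip'' in this way.

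The fix is immediate with what you already have. Apply $\sigma$ to $\Delta(x)(x\otimes 1)=x\otimes x$ to get $\sigma\Delta(x)\,(1\otimes x)=x\otimes x$, then use $\sigma\Delta(x)=(R\otimes R)\Delta(x)$ and apply the anti-multiplicative involution $R\otimes R$ (which fixes $1\otimes x$ and $x\otimes x$) to obtain $(1\otimes x)\Delta(x)=x\otimes x$; since both factors are self-adjoint and the product is self-adjoint, this gives $\Delta(x)(1\otimes x)=x\otimes x$. Equivalently, and this is what the paper does, simply rerun the whole argument in $\G^{\op}$, where the hypothesis $\Delta^{\op}(x)=\sigma\Delta(x)\geq x\otimes x$ still holds.
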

\begin{proof}
We have that $x\otimes x = (x\otimes x)\Delta(x) = (x\otimes x)W^*(1\otimes x)W$,
and so $(x\otimes x)W^*(1\otimes x) = (x\otimes x)W^*$.
Now we use that $(\hat J\otimes J)W (\hat J\otimes J) = W^*$, see
\cite[Corollary~2.2]{kusvn}.  Thus
\[ x\otimes x = (x\otimes x)(\hat J\otimes J)W(\hat J\otimes J)(1\otimes x)
(\hat J\otimes J)W^*(\hat J\otimes J), \]
but $JxJ = x^*=x$ as $x$ is central and self-adjoint, and $\hat J x \hat J
=R(x^*)=R(x)=x$ by assumption.  So $x\otimes x = (x\otimes x)W(1\otimes x)W^*$.
Taking adjoints gives $x\otimes x = W(1\otimes x)W^*(x\otimes x)$.
As $W^*\in L^\infty(\G) \vnten L^\infty(\hat\G)$, we see that
$W^*(x\otimes x) = (x\otimes 1)W^*(1\otimes x)$, and so, from above,
\[ x\otimes x = W(x\otimes x)W^*(1\otimes x) = W(x\otimes x)W^*. \]
Thus $W(x\otimes x) = (x\otimes x)W$.

Then, arguing similarly, $\Delta(x)(x\otimes 1) = W^*(1\otimes x)W(x\otimes 1)
= W^*(x\otimes x)W = x\otimes x$.  The case of $\Delta(x)(1\otimes x)$
follows by applying the result to $\G^\op$ (see \cite[Section~4]{kusvn}).
\end{proof}

\begin{corollary}
Let $p,q\in L^\infty(\G)$ be central projections with
$\Delta(p)\geq p\otimes p$ and $\Delta(q)\geq q\otimes q$,
with $R(p)=p$ and $R(q)=q$, and with $p+q\geq 1$.  Then $p=1$ or $q=1$.
\end{corollary}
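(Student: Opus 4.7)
The plan is to rephrase the hypothesis $p+q \geq 1$ as the equivalent disjointness condition $p'q' = 0$, where $p' = 1-p$ and $q' = 1-q$, then use the previous lemma to pin down $\Delta(p)$ and $\Delta(q)$ up to a corner term, and finally exploit $\Delta(p')\Delta(q') = \Delta(p'q') = 0$ to force $p' = 0$ or $q' = 0$.

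First, because $p$ and $q$ are commuting projections, $p+q \geq 1$ is equivalent to $p' \leq q$, and hence to the three identities $pq' = q'$, $p'q = p'$, and $p'q' = 0$, which I would use freely. Second, the previous lemma gives $\Delta(p)(p \otimes 1) = \Delta(p)(1 \otimes p) = p \otimes p$, and combined with $\Delta(p) \geq p \otimes p$ this forces
\[ \Delta(p) = p \otimes p + P, \qquad 0 \leq P \leq p' \otimes p'. \]
Taking complements yields $\Delta(p') = p \otimes p' + p' \otimes p + B$ for some $0 \leq B \leq p' \otimes p'$, and symmetrically $\Delta(q') = q \otimes q' + q' \otimes q + C$ with $0 \leq C \leq q' \otimes q'$.

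Now $\Delta(p')\Delta(q') = \Delta(p'q') = 0$, and I would expand this nine-term product using the decompositions above. Every term involving $B$ or $C$ vanishes: for instance $(p' \otimes p')(q \otimes q') = p'q \otimes p'q' = p' \otimes 0 = 0$, and all similar products contain a factor $p'q'$ somewhere. Among the four remaining pure-tensor products, the two diagonal ones $(p \otimes p')(q \otimes q')$ and $(p' \otimes p)(q' \otimes q)$ also contain a $p'q'$ factor, and only the two off-diagonal terms survive:
\[ (p \otimes p')(q' \otimes q) = q' \otimes p', \qquad (p' \otimes p)(q \otimes q') = p' \otimes q'. \]
Hence $p' \otimes q' + q' \otimes p' = 0$, and since these are mutually orthogonal positive projections, both must vanish, giving $p' = 0$ or $q' = 0$, i.e.\ $p = 1$ or $q = 1$.

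The only real obstacle is the nine-term bookkeeping, which is routine once the three identities $pq' = q'$, $p'q = p'$, $p'q' = 0$ are available; the hypothesis $R(p) = p = R(q)$ enters only indirectly, through its use in the previous lemma to obtain the corner decomposition of $\Delta(p)$ and $\Delta(q)$.
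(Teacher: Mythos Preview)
Your argument is correct. The corner decomposition $\Delta(p)=p\otimes p+P$ with $0\le P\le p'\otimes p'$ does follow from the lemma together with centrality, and your nine-term expansion of $\Delta(p')\Delta(q')=0$ is accurate once one notes that $B=(p'\otimes p')B$ and $C=(q'\otimes q')C$, so every cross term picks up a factor $p'q'=0$.

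The paper reaches the same conclusion by a shorter, though closely related, route. Rather than expanding $\Delta(p')\Delta(q')$, it observes directly from the lemma that $\Delta(p)\big((1-p)\otimes p\big)=0$; since $1-q\le p$ this yields $\Delta(p)\big((1-p)\otimes(1-q)\big)=0$, and symmetrically $\Delta(q)\big((1-p)\otimes(1-q)\big)=0$. Then the hypothesis is used in the form $\Delta(p)+\Delta(q)\ge 1$ (rather than your $\Delta(p')\Delta(q')=0$), and multiplying by $(1-p)\otimes(1-q)$ immediately gives $(1-p)\otimes(1-q)=0$. So the paper avoids the explicit decomposition of $\Delta(p')$ and $\Delta(q')$ and the nine-term bookkeeping; your approach trades that economy for a more mechanical computation, but both are valid and rest on the same lemma.
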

\begin{proof}
By the lemma, $\Delta(p)( (1-p)\otimes p) = \Delta(p)(1\otimes p)
- p\otimes p = 0$, and $\Delta(q)(q\otimes(1-q))=0$.  As $1-q\leq p$
and $1-p\leq q$, it follows that $\Delta(p)( (1-p)\otimes(1-q) ) = 0$
and $\Delta(q)((1-p)\otimes(1-q))=0$.  As $\Delta(p)+\Delta(q)\geq 1$,
it follows that $(1-p)\otimes(1-q)=0$, so $p=1$ or $q=1$.
\end{proof}

\begin{proposition}\label{prop:two}
Form $S_a$ and $S_h$ as above.  Then:
\begin{enumerate}
\item $(T_{S_h} \otimes T_{S_h})\Delta_1(x) = \Delta_2(T(x))(S_h\otimes S_h)$
for $x\in L^\infty(\G_1)$;
\item\label{prop:twoa}
$(T_{S_a} \otimes T_{S_a})\Delta_1(x) = \Delta_2(T(x))(S_a\otimes S_a)$
for $x\in L^\infty(\G_1)$;
\item $\Delta_2(S_h) \geq S_h \otimes S_h$;
\item $\Delta_2(S_a) \geq S_a \otimes S_a$.
\end{enumerate}
\end{proposition}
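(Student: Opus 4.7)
Parts (1) and (2) are immediate consequences of the identity
\[
(T\otimes T)\Delta_1 = \Delta_2\circ T,
\]
which is simply the dual formulation of the hypothesis that $T_*$ is multiplicative. Indeed, since $S_h$ and $S_a$ are central projections in $L^\infty(\G_2)$ and $T_p(x)=T(x)p$, one has $(T_p\otimes T_p)(a\otimes b) = (T\otimes T)(a\otimes b)\cdot(p\otimes p)$ on elementary tensors, and normality of the tensor product extends this to the claimed equalities for $p=S_h$ and $p=S_a$.

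For Part (3), I would start from the observation that by Part (1) the map $x \mapsto \Delta_2(T(x))(S_h\otimes S_h)$ equals $(T_{S_h}\otimes T_{S_h})\Delta_1(x)$, and hence is a $*$-homomorphism from $L^\infty(\G_1)$ into $L^\infty(\G_2)\vnten L^\infty(\G_2)$ (note $T_{S_h}$ is $*$-preserving because $T$ is Jordan $*$ and $S_h$ is self-adjoint and central). Comparing the multiplicativity of this composite on $\Delta_1(x)\Delta_1(y)=\Delta_1(xy)$ with the multiplicativity of $\Delta_2$ yields
\[
\Delta_2(T(xy))(S_h\otimes S_h) = \Delta_2(T(x))\Delta_2(T(y))(S_h\otimes S_h) = \Delta_2(T(x)T(y))(S_h\otimes S_h),
\]
so the multiplicativity defect $D(x,y) := T(xy) - T(x)T(y)$ of $T$ satisfies $\Delta_2(D(x,y))(S_h\otimes S_h) = 0$ for all $x,y \in L^\infty(\G_1)$. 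Using Kadison's decomposition $T = T_h + T_a$ (with $T_h(x) = T(x)P_h$ a $*$-homomorphism, $T_a(x) = T(x)P_a$ a $*$-antihomomorphism, and $P_h + P_a = 1$ orthogonal central projections) one computes $D(x,y) = -[T_a(x),T_a(y)]$, so as $x,y$ vary the $D(x,y)$ are precisely the commutators of $P_a L^\infty(\G_2)$. Analysing the defining property of $S_h$ then shows $S_h = P_h + E$, where $E\leq P_a$ is the maximum central projection with $EL^\infty(\G_2)$ abelian, and hence the $D(x,y)$ range over all commutators of $(1-S_h)L^\infty(\G_2) = (P_a-E)L^\infty(\G_2)$.

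To conclude, note that because $S_h\otimes S_h$ is central in $L^\infty(\G_2)\vnten L^\infty(\G_2)$, the annihilator $\{z\in L^\infty(\G_2) : \Delta_2(z)(S_h\otimes S_h) = 0\}$ is a $\sigma$-weakly closed two-sided ideal of $L^\infty(\G_2)$; since it contains every $D(x,y)$, it contains the $\sigma$-weakly closed two-sided ideal they generate. By construction $(1-S_h)L^\infty(\G_2)$ has no abelian central direct summand, so the standard fact that commutators generate the whole algebra as a $\sigma$-weakly closed two-sided ideal in such a von Neumann algebra (via central disintegration into simple factors) lets us conclude that this generated ideal is all of $(1-S_h)L^\infty(\G_2)$. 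Evaluating at $1-S_h$ gives $\Delta_2(1-S_h)(S_h\otimes S_h) = 0$, which is Part (3). Part (4) follows by the symmetric argument: $T_{S_a}$ is an algebra antihomomorphism, the corresponding defect is $T(xy)-T(y)T(x) = [T_h(x),T_h(y)]$, and $(1-S_a)L^\infty(\G_2) = (P_h-F)L^\infty(\G_2)$ is likewise without abelian central direct summand. The main technical obstacle is the commutator-ideal fact; everything else is routine rearrangement around $(T\otimes T)\Delta_1 = \Delta_2 T$ and Kadison's decomposition.
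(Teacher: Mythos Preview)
Your argument for (1) and (2) is essentially the paper's, though the identity ``$(T\otimes T)\Delta_1 = \Delta_2 T$'' should be read with care: since $T$ is only a Jordan $*$-isomorphism, a normal extension of $T\otimes T$ to the full von~Neumann tensor product is not automatic, and the identity is really the weak-$*$ computation $\ip{\Delta_1(x)}{T_*\omega\otimes T_*\omega'} = \ip{\Delta_2(T(x))}{\omega\otimes\omega'}$, which is exactly what the paper records (with $p\omega$ in place of $\omega$).

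For (3) and (4) your approach is correct but takes a genuinely different route. The paper argues directly from maximality: since $S_a\otimes S_a$ commutes with $\Delta_2(L^\infty(\G_2))$, one takes the central projection $q\in L^\infty(\G_2)$ for which $\Delta_2(q)$ is the central support of $S_a\otimes S_a$ relative to $\Delta_2(L^\infty(\G_2))$, so that multiplication by $S_a\otimes S_a$ induces a $*$-isomorphism $\Delta_2(L^\infty(\G_2)q)\cong \Delta_2(L^\infty(\G_2))(S_a\otimes S_a)$. Part~(2) then shows that $x\mapsto\Delta_2(T_q(x))$ is anti-multiplicative, hence $T_q$ is, so $q\in\mc P_a$ and $q\le S_a$, giving $\Delta_2(S_a)\ge\Delta_2(q)\ge S_a\otimes S_a$. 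Your argument instead fixes a St{\o}rmer decomposition $P_h+P_a=1$, identifies $S_h=P_h+E$ with $E$ the abelian part of $P_aL^\infty(\G_2)$, and then invokes the structural fact that in a von~Neumann algebra without abelian central summand the $\sigma$-weakly closed ideal generated by commutators is everything. Both arguments are short; the paper's avoids the commutator-ideal lemma and uses nothing beyond the defining maximality of $S_h,S_a$, while yours yields as a by-product an explicit description of $S_h$ and $S_a$ in terms of $P_h,P_a$ and the abelian summands.
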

\begin{proof}
We prove claims for $S_a$; the proofs for $S_h$ are easier.
The preadjoint of $T_{S_a}$ is the map $\omega\mapsto T_*(S_a\omega)$.
Firstly, let $\omega,\omega'\in L^1(\G_2)$, and calculate
\[ \ip{(T_{S_a} \otimes T_{S_a})\Delta_1(x)}{\omega\otimes\omega'}
= \ip{x}{T_*(S_a\omega)T_*(S_a\omega')}
= \ip{\Delta_2(T(x))}{S_a\omega\otimes S_a\omega'}, \]
which shows (\ref{prop:twoa}).

As $S_a$ is central, we see that $S_a\otimes S_a \in L^\infty(\G_2)'
\vnten L^\infty(\G_2)' \subseteq \Delta_2(L^\infty(\G_2))'$.  Let $q\in
L^\infty(\G_2)$ be such that $\Delta_2(q)$ is the central support of
$S_a\otimes S_a$ (so $q$ is the smallest central projection with $\Delta_2(q)
(S_a\otimes S_a) = S_a\otimes S_a$).  Then
\[ \Phi: \Delta_2(L^\infty(\G_2))(S_a\otimes S_a) \rightarrow
\Delta_2(L^\infty(\G_2)q); \quad \Delta_2(x)(S_a\otimes S_a)
\mapsto \Delta_2(xq) = \Delta_2(x)\Delta_2(q), \]
is readily seen to be an isomorphism.  Then, for $x\in L^\infty(\G_1)$,
\[ \Delta_2(T_q(x)) = \Delta_2(T(x)q)
= \Phi\big( \Delta_2(T(x)) (S_a\otimes S_a) \big)
= \Phi\big( (T_{S_a}\otimes T_{S_a})\Delta_1(x) \big). \]
So $x\mapsto \Delta_2(T_q(x))$ is anti-multiplicative, and so $q\in \mc P_a$.
Thus $q\leq S_a$, and so $\Delta_2(S_a) \geq \Delta(q) \geq S_a\otimes S_a$
as required.
\end{proof}

At this point, we can no longer follow \cite{es}.  We would like to
show that $TR_1 = R_2T$ (that is, $T'$ as defined in the next proposition,
is the identity map) but we have to proceed somewhat indirectly.

\begin{proposition}\label{prop:five}Suppose that the map
$T' = T^{-1} R_2 T R_1:L^\infty(\G_1) \rightarrow
L^\infty(\G_1)$ is a homomorphism.  Then $T$ is either a $*$-homomorphism or
an anti-$*$-homomorphism.
\end{proposition}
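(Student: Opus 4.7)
The plan is to extract from the hypothesis ``$T'$ is a homomorphism'' a single commutator identity that forces both $S_h$ and $S_a$ to be $R_2$-invariant, and then invoke the corollary preceding Proposition~\ref{prop:two} to conclude that $S_h = 1$ or $S_a = 1$, which is exactly the desired dichotomy.

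The first step is to refine the decomposition of $T$. I would write $1 = p_h + p_a + p_c$ in $L^\infty(\G_2)$ using the orthogonal central projections $p_h = S_h(1 - S_a)$, $p_a = S_a(1 - S_h)$, $p_c = S_h S_a$, which indeed sum to $1$ since $S_h + S_a \geq 1$. On these pieces $T$ is respectively a $*$-homomorphism, an anti-$*$-homomorphism, and both (the image being commutative). A key observation is that $p_c$ is characterised as the \emph{largest} central projection $r$ with $L^\infty(\G_2) r$ commutative: any such $r$ makes $T(\cdot) r$ simultaneously a hom and an anti-hom, forcing $r \leq S_h \wedge S_a = p_c$. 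Since an anti-$*$-automorphism sends commutative subalgebras to commutative subalgebras, this maximality immediately yields $R_2(p_c) = p_c$. I would also observe that $T'$, being a bijective linear isometry fixing $1$, is itself a Jordan $*$-iso by Kadison; the hypothesis upgrades it to a genuine $*$-automorphism of $L^\infty(\G_1)$.

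The technical heart of the argument is to unpack $T'(xy) = T'(x) T'(y)$. Applying $T$ to both sides and using $T \circ T' = R_2 T R_1$, I would expand each side via $T(ab) = T(a)T(b)(p_h + p_c) + T(b)T(a) p_a$, and apply the anti-multiplicativity of $R_2$ together with the centrality of the $p_i$. Writing $q_i := R_2(p_i)$ and using $(p_h + p_c) - (q_h + q_c) = q_a - p_a$, the two expansions should collapse to
\[
(p_a - q_a)\bigl[R_2 T R_1(y),\,R_2 T R_1(x)\bigr] = 0 \qquad (x,y \in L^\infty(\G_1)),
\]
which in view of the bijectivity of $R_2 T R_1$ upgrades to $(p_a - q_a)[a,b] = 0$ for all $a,b \in L^\infty(\G_2)$.

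To finish, I would decompose $p_a - q_a$ as the signed sum of the orthogonal central projections $p_a \wedge q_a^\perp$ and $q_a \wedge p_a^\perp$; each annihilates all commutators of $L^\infty(\G_2)$ and hence lies below $p_c$ by the maximality above. The relation $p_a \perp p_c$ then gives $p_a \wedge q_a^\perp = 0$, so $p_a \leq q_a$. Setting $r := q_a - p_a \leq p_c$, the equality $q_h = 1 - q_a - q_c = p_h - r$ combined with $q_h$ being a projection forces $r \leq p_h$, and then $r \leq p_h \wedge p_c = 0$. Hence $R_2$ fixes each of $p_h, p_a, p_c$, and in particular $R_2(S_h) = S_h$ and $R_2(S_a) = S_a$. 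Together with Proposition~\ref{prop:two} and $S_h + S_a \geq 1$, the corollary preceding that proposition now forces $S_h = 1$ or $S_a = 1$, yielding the dichotomy. The main obstacle I foresee is ensuring that the bookkeeping in the central identity actually collapses to an equation involving only $p_a - q_a$: the two expansions each produce three terms whose differences must cancel cleanly, and one has to keep careful track of the orders of factors produced by $R_2$'s anti-multiplicativity.
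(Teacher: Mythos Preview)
Your argument is correct: the expansion of $T(T'(xy)) = T(T'(x)T'(y))$ via the decomposition $T(ab) = T(a)T(b)(p_h+p_c) + T(b)T(a)p_a$ does collapse (after using $R_2(p_c)=p_c$ and $1-p_a = p_h+p_c$) to $(p_a - R_2(p_a))[U(x),U(y)] = 0$ with $U = R_2TR_1$ surjective, and your endgame with the orthogonal pieces of $p_a - R_2(p_a)$ and the maximality of $p_c$ then forces $R_2(p_a)=p_a$, $R_2(p_h)=p_h$.  One small point you glide over: to see that a central $r$ with $L^\infty(\G_2)r$ commutative lies below $p_c$, you need that a Jordan $*$-map into a commutative corner is genuinely multiplicative; this follows, e.g., from the St{\o}rmer splitting applied inside that corner.

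The paper's route is considerably shorter and avoids the fine decomposition, the commutator identity, and $p_c$ altogether.  It simply writes
\[
T_{R_2(S_h)}(x) \;=\; T(x)\,R_2(S_h) \;=\; R_2\bigl(R_2(T(x))\,S_h\bigr) \;=\; R_2\bigl(T_{S_h}\bigl(T'(R_1(x))\bigr)\bigr),
\]
using $R_2 T = T\,T' R_1$.  Thus $T_{R_2(S_h)} = R_2 \circ T_{S_h} \circ T' \circ R_1$ is a composition of two anti-homomorphisms ($R_2$, $R_1$) and two homomorphisms ($T_{S_h}$ by definition, $T'$ by hypothesis), hence a homomorphism; so $R_2(S_h)\le S_h$, and $R_2^2=\iota$ gives equality.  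The same trick, \emph{mutatis mutandis}, handles $S_a$.

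What the paper's approach buys is economy: a two-line computation in place of your commutator bookkeeping and projection arithmetic.  What your approach buys is a more transparent explanation of \emph{how} the hypothesis on $T'$ is used—it literally forces a certain central element to annihilate all commutators—whereas the paper feeds the hypothesis into a composition as a black box.
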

\begin{proof}
As the unitary antipode $R_2$ is an anti-$*$-homomorphism, it is easy to
see that $R_2(S_h)$ is a central projection.  For $x\in L^\infty(\G_1)$,
\[ T_{R_2(S_h)}(x) = T(x) R_2(S_h) = R_2\big( R_2(T(x)) S_h \big)
= R_2\big( T (T' (R_1(x))) S_h \big). \]
As $y\mapsto T(y)S_h$ is a homomorphism, it follows that $T_{R_2(S_h)}$
is a homomorphism, and so $R_2(S_h) \leq S_h$.  As $R_2$ preserves the
order, also $S_h \leq R_2(S_h)$, so $S_h = R_2(S_h)$.

A similar argument establishes that $R(S_a) = S_a$.  So,
combining the previous proposition and corollary, we conclude that
either $S_h=1$, in which case $T$ is a $*$-homomorphism, or $S_h=0$, so
$S_a=1$, and $T$ is an anti-$*$-homomorphism.
\end{proof}

We are henceforth motivated to study the map $T' = T^{-1} R_2 T R_1$.
Notice that this map is normal, and the preadjoint $T'_*$ is
an isometric algebra isomorphism from $L^1(\G_2)$ to itself.

\subsection{Characterising the unitary antipode}\label{sec:ua}

We now study the unitary antipode more closely.
For us, an important characterisation
of $R$ is the following, given in \cite[Proposition~5.20]{kv}:
\[ R\big( (\psi\otimes\iota)((a^*\otimes 1)\Delta(b)) \big)
= (\psi\otimes\iota)\big( \Delta(\sigma^\psi_{-i/2}(a^*))
(\sigma^\psi_{-i/2}(b)\otimes 1) \big), \]
where $a,b\in\mc T_\psi$.  (We shall shortly explain further exactly what
this formula means).
We are hence motivated to look at the right Haar weights, and how they
interact with $T$.  We shall then split $L^\infty(\G_1)$ into a direct
summand, with $T$ acting as a homomorphism in the first component, and
as an anti-homomorphism in the second.  Then $R_1$ and $R_2$ will interact
well with $T$ on these components, but less well on the cross-terms.  However,
this ``bad interaction'' will cancel out if we consider $T'^2$,
for $T'$ as defined above.

\begin{lemma}
The map $L^\infty(\G_1)^+ \rightarrow [0,\infty]; x\mapsto \psi_2(T(x))$
is a right-invariant, normal semi-finite faithful weight on $L^\infty(\G_1)$,
and is hence proportional to $\psi_1$.
\end{lemma}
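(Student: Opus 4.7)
The plan is to verify directly that $\varphi'(x) := \psi_2(T(x))$ defines a normal semi-finite faithful right-invariant weight on $L^\infty(\G_1)$, and then invoke uniqueness (up to positive scalar) of the right Haar weight.

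The weight properties are essentially formal consequences of Kadison's theorem applied to $T$. Since we are operating under the assumption $T(1) = 1$, Kadison gives that $T$ is a unital Jordan $*$-isomorphism; in particular $T$ preserves the positive cone (for $y$ self-adjoint, $T(y^2) = T(y)^2 \geq 0$), so $\varphi'$ takes values in $[0,\infty]$ and inherits additivity and positive homogeneity from $\psi_2$. Being the adjoint of a bounded map, $T$ is $\sigma$-weakly continuous, and since $T^{-1}$ is similarly an adjoint, $T$ is a $\sigma$-weak homeomorphism. Normality of $\varphi'$ then follows because $T$ pushes bounded increasing nets to bounded increasing nets with the same supremum. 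Faithfulness is immediate from faithfulness of $\psi_2$ and injectivity of $T$. For semi-finiteness, $\mf m_{\varphi'}^+ = T^{-1}(\mf m_{\psi_2}^+)$ is the image under a $\sigma$-weak homeomorphism of a $\sigma$-weakly dense subset, hence $\sigma$-weakly dense.

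The crux is right-invariance, which reduces to the identity
\[ T\bigl((\iota\otimes\omega)\Delta_1(y)\bigr) = (\iota\otimes T_*^{-1}(\omega))\Delta_2(T(y)). \]
This is proved by pairing both sides with an arbitrary $\rho \in L^1(\G_2)$. The left side unpacks to $\ip{y}{T_*(\rho)\,\omega}$, where the module action is in $L^1(\G_1)$. Since $T_*$ is an algebra homomorphism, $T_*(\rho)\,\omega = T_*(\rho \cdot T_*^{-1}(\omega))$, and transferring back through $T$ recovers the pairing of the right side. Taking $y \in \mf m_{\varphi'}^+$ so that $T(y) \in \mf m_{\psi_2}^+$, then applying $\psi_2$ and invoking its right-invariance yields $\varphi'\bigl((\iota\otimes\omega)\Delta_1(y)\bigr) = \psi_2(T(y)) \, T_*^{-1}(\omega)(1)$. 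Finally, the assumption $T(1) = 1$ together with adjoint duality gives $T_*^{-1}(\omega)(1_{\G_2}) = \omega(1_{\G_1})$, completing the right-invariance of $\varphi'$.

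The one subtlety, and what I expect to be the main obstacle, is that $T_*^{-1}(\omega)$ need not be positive even if $\omega$ is, so the positive-functional form of the right-invariance identity for $\psi_2$ does not apply verbatim. This is circumvented by extending the identity linearly in $\sigma$ from $L^1(\G_2)^+$ to all of $L^1(\G_2)$ via the Jordan decomposition of normal functionals: once $T(y) \in \mf m_{\psi_2}^+$ is fixed, both sides are linear functionals of $\sigma$ which agree on the positive cone, hence everywhere. With right-invariance established, proportionality with $\psi_1$ follows at once from uniqueness (up to positive constant) of the right Haar weight.
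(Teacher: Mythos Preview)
Your argument is correct and follows essentially the same route as the paper: verify the slice identity $T((\iota\otimes\omega)\Delta_1(y)) = (\iota\otimes T_*^{-1}(\omega))\Delta_2(T(y))$ by pairing against an arbitrary $\rho\in L^1(\G_2)$ and using that $T_*$ is multiplicative, then apply right-invariance of $\psi_2$ and uniqueness of the right Haar weight.

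The one place you diverge is unnecessary. You flag as ``the main obstacle'' that $T_*^{-1}(\omega)$ need not be positive for $\omega\ge 0$, and then repair this by a Jordan-decomposition extension. In fact $T_*^{-1}(\omega)$ \emph{is} positive: since $T$ is a unital Jordan $*$-isomorphism it is an order isomorphism $L^\infty(\G_1)^+\to L^\infty(\G_2)^+$, so $T^{-1}$ preserves positivity, and hence for $x\in L^\infty(\G_2)^+$ one has $\ip{x}{T_*^{-1}(\omega)}=\ip{T^{-1}(x)}{\omega}\ge 0$. The paper simply records ``as $T_*^{-1}(\omega)\ge 0$'' and proceeds directly; your detour is harmless but superfluous.
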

\begin{proof}
As $T$ is a Jordan homomorphism, it restricts to an order isomorphism
$L^\infty(\G_1)^+ \rightarrow L^\infty(\G_2)^+$.  Thus we can define
$\psi = \psi_2\circ T:L^\infty(\G_1)^+\rightarrow[0,\infty]$, and it follows
that $\psi$ is a faithful weight, and $\mf m_\psi^+ = T^{-1}(\mf m_{\psi_2}^+)$.
Thus also $\mf m_\psi = T^{-1}(\mf m_{\psi_2})$.  As $T$ is $\sigma$-weakly
continuous, it is now routine to establish that $\psi$ is semi-finite,
and normal (as $T$ is an order isomorphism on the positive cones).

It remains to check that $\psi$ is right-invariant.  For $\omega\in
L^1(\G_1)^+$ and $y\in\mf m_{\psi}^+$, a simple calculation shows that
$T((\iota\otimes\omega)\Delta_1(y))
= (\iota\otimes T_*^{-1}(\omega))\Delta_2(T(y))$.
As $T_*^{-1}(\omega)\geq 0$ and $T(y) \in \mf m_{\psi_2}^+$, it follows that
\[ \psi\big( (\iota\otimes\omega)\Delta(y) \big)
= \psi_2\big( (\iota\otimes T_*^{-1}(\omega))\Delta_2(T(y)) \big)
= \ip{1}{T_*^{-1}(\omega)} \psi_2(T(y)). \]
As $T$ is unital, this shows that $\psi$ is right-invariant.
\end{proof}

Henceforth, we shall actually assume that that $\psi_1 = \psi_2\circ T$.

Henceforth, using \cite[Theorem~3.3]{sto}, we fix a central projection
$p\in L^\infty(\G_2)$ such that $T_p$ is a homomorphism, and $T_{1-p}$
is an anti-homomorphism.  Note that we cannot
necessarily assume that $p=S_a$ and $1-p=S_h$.  Let $q = T^{-1}(p)$.

\begin{lemma}
With $p,q$ as above, we have that $q$ is a central projection in 
$L^\infty(\G_1)$.  Then $L^\infty(\G_1)$ decomposes as
$q L^\infty(\G_1) \oplus (1-q) L^\infty(\G_1)$, $L^\infty(\G_2)$
decomposes as $p L^\infty(\G_2) \oplus (1-p) L^\infty(\G_2)$, and
under these identifications, $T$ decomposes as $T_p \oplus T_{1-p}$.
\end{lemma}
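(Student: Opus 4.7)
The plan is to verify in turn that $q = T^{-1}(p)$ is a projection, is central in $L^\infty(\G_1)$, and splits $T$ as a direct sum. That $q$ is a projection is immediate: $T$ is a Jordan $*$-isomorphism, so it preserves involution and squares, and hence $p = p^* = p^2$ together with injectivity of $T$ force $q = q^* = q^2$.

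For centrality, I would combine the two hypotheses into the single identity
\[ T(ab) = T(a)T(b)p + T(b)T(a)(1-p) \qquad (a,b\in L^\infty(\G_1)). \]
This comes from writing out $T_p(ab) = T_p(a)T_p(b)$ and $T_{1-p}(ab) = T_{1-p}(b)T_{1-p}(a)$ separately, using that $p$ is central in $L^\infty(\G_2)$ to simplify the right-hand sides, and summing. Setting $(a,b) = (q,x)$ and then $(a,b) = (x,q)$, substituting $T(q) = p$, and using $p(1-p) = 0$ together with the centrality of $p$ to kill the cross terms, both products reduce to
\[ T(qx) = T(x)p = T(xq). \]
Injectivity of $T$ then forces $qx = xq$ for every $x\in L^\infty(\G_1)$, so $q$ is central.

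With $q$ central, the algebras split as indicated. The identity $T(qx) = T(x)p$ shows that $T$ carries $q L^\infty(\G_1)$ into $p L^\infty(\G_2)$; the complementary case gives the other inclusion, and bijectivity of $T$ promotes both to bijections. Finally, for $x \in q L^\infty(\G_1)$ one has $T_p(x) = T(x)p = T(qx) = T(x)$, so $T_p$ agrees with $T$ on this summand, and likewise $T_{1-p}$ agrees with $T$ on the complementary summand, yielding $T = T_p \oplus T_{1-p}$ under the stated identifications. The only step where anything subtle happens is the combination of the two hypotheses into the single identity above, in which the centrality of $p$ in $L^\infty(\G_2)$ plays the essential role; beyond that the argument is purely algebraic.
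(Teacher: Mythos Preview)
Your argument is correct and is essentially the same as the paper's proof. The paper works with $T_p$ and $T_{1-p}$ separately---showing $T_p(qx) = T_p(xq) = T_p(x)$ and $T_{1-p}(qx) = T_{1-p}(xq) = 0$, then summing to obtain $T(qx-xq)=0$---whereas you package these into the single identity $T(ab) = T(a)T(b)p + T(b)T(a)(1-p)$ before specialising; the underlying computations and the role of the centrality of $p$ are identical.
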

\begin{proof}
Let $x\in L^\infty(\G_1)$.  Then $T(xq)p =  T_p(xq) = T_p(x) T_p(q)
= T(x)p T(q)p = T(x)p = T_p(x)$, and similarly $T_p(qx) = T_p(x)$, and
$T_{1-p}(qx)=T_{1-p}(xq)=0$.  Thus
\[ T(xq-qx) = T_p(xq-qx) + T_{1-p}(xq-qx) = T_p(xq) - T_p(qx)
= T_p(x) - T_p(x)=0. \]
So $q$ is central; it is easily seen to be a projection.  The remaining
claims now follow by simple calculation.
\end{proof}

This lemma means that, for example, given $a\in qL^\infty(\G_1)$ and
$x\in L^\infty(\G_1)$,
\[ T(ax) = T\big( axq + ax(1-q) \big) = T_p(a) T_p(x)
= T_p(a) T(x) = T(a) T_p(x) = T(a) T(x). \]
Thus we understand $T$ quite well; what is unclear is how $T$ interacts
with the unitary antipodes $R_1$ and $R_2$.

We can then restrict $\psi_1 = \psi_2\circ T$ to $qL^\infty(\G_1)$ and
to $(1-q)L^\infty(\G_2)$, say giving $\psi^q_1$ and $\psi^{1-q}_1$.
As $T_p$ is a $*$-homomorphism, it is clear that $T_p$ gives a bijection
from $\mf n_{\psi_1^q}$ to $\mf n_{\psi_2^p}$.  As $T_{1-p}$ is an
anti-$*$-homomorphism, we have that $x\in \mf n_{\psi_1^{1-q}}$ if
and only if $T(x^*)=T(x)^*\in\mf n_{\psi_2^{1-p}}$.
To ease notation for the modular automorphism groups, for $t\in\mathbb R$,
we shall let $\sigma^{2,p}_t = \sigma^{\psi_2^p}_t$
and $\sigma^{2,1-p}_t = \sigma^{\psi_2^{1-p}}_t$, and similarly for $\psi_1$.

\begin{lemma}\label{lem:three}
The map $T$ intertwines the modular automorphism groups in the following ways:
\[ T_p \circ \sigma^{1,q}_t = \sigma^{2,p}_t \circ T_p, \qquad
T_{1-p} \circ \sigma^{1,1-q}_t
= \sigma^{2,1-p}_{-t} \circ T_{1-p}
\qquad (t\in\mathbb R). \]
\end{lemma}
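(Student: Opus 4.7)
The plan is to invoke the uniqueness of the modular automorphism group for a faithful normal semi-finite weight and transport it separately through $T_p$ and $T_{1-p}$. The weights $\psi_1^q, \psi_1^{1-q}, \psi_2^p, \psi_2^{1-p}$ are faithful, normal, and semi-finite on their respective direct summands (immediate from centrality of $q$ and $p$). Furthermore $T_p$ restricts to a normal $*$-isomorphism $qL^\infty(\G_1) \to pL^\infty(\G_2)$ satisfying $\psi_1^q = \psi_2^p \circ T_p$, while $T_{1-p}$ restricts to a normal anti-$*$-isomorphism $(1-q)L^\infty(\G_1) \to (1-p)L^\infty(\G_2)$ satisfying $\psi_1^{1-q} = \psi_2^{1-p} \circ T_{1-p}$.

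For the first identity I would observe that the one-parameter group $\alpha_t := T_p^{-1}\,\sigma^{2,p}_t\,T_p$ consists of $*$-automorphisms of $qL^\infty(\G_1)$ which, by direct transport of the KMS condition along the $*$-isomorphism $T_p$, satisfy the KMS condition for $\psi_1^q$; uniqueness of the modular automorphism group then forces $\alpha_t = \sigma^{1,q}_t$, and rearrangement gives the first displayed identity.

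For the second, more substantive identity, set $\beta_t := T_{1-p}^{-1}\,\sigma^{2,1-p}_{-t}\,T_{1-p}$, a $\sigma$-weakly continuous one-parameter group of $*$-automorphisms of $(1-q)L^\infty(\G_1)$. The verification that $\beta_t$ is the modular group of $\psi_1^{1-q}$ proceeds by transporting the KMS identity in its equivalent form $\psi_2^{1-p}(AB) = \psi_2^{1-p}(\sigma^{2,1-p}_i(B)\,A)$ through $T_{1-p}$ with $A = T_{1-p}(y)$ and $B = T_{1-p}(x)$. Anti-multiplicativity $T_{1-p}(y)T_{1-p}(x) = T_{1-p}(xy)$ reverses the order of products on each side, and the sign of the imaginary shift is precisely what is required for the resulting identity to read $\psi_1^{1-q}(xy) = \psi_1^{1-q}(y\,\beta_{-i}(x))$. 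Uniqueness of the modular group then yields $\beta_t = \sigma^{1,1-q}_t$, which is the second identity.

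The main point requiring care is the book-keeping in the anti-homomorphism case; the time-reversal $t \mapsto -t$ reflects the general fact that an anti-$*$-isomorphism between von Neumann algebras identifies one algebra with the opposite of the other, and the modular automorphism group of a weight on $N^{\op}$ runs backwards relative to the modular group on $N$.
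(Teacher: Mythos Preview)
Your proposal is correct and follows essentially the same approach as the paper: the paper's proof simply cites \cite[Corollary~1.4, Chapter~VIII]{tak2} for the $*$-isomorphism case and remarks that ``a variant of the standard argument'' gives the sign change for the anti-$*$-isomorphism, while you spell out that standard argument via transport of the KMS condition and uniqueness of the modular group. Your version is thus a fleshed-out form of what the paper leaves implicit.
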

\begin{proof}
As $T_p$ is a $*$-isomorphism between $L^\infty(\G_1)q$ and $L^\infty(\G_2)p$,
it is standard that it intertwines the modular automorphism group, compare
\cite[Corollary~1.4, Chapter~VIII]{tak2}.  As $T_{1-p}$ is an
anti-$*$-isomorphism, a variant of the standard argument will show that
we get the sign change $t\mapsto -t$.
\end{proof}

As in \cite[Section~6]{kusbook} (see also the C$^*$-algebraic
version in \cite[Section~1.5]{kv}) we let 
\[ \mf m_{\psi_1\otimes\iota}^+ = \big\{ x\in (L^\infty(\G_1)
\vnten L^\infty(\G_1))^+ : (\iota\otimes\omega)(x) \in \mf m_{\psi_1}^+
\ (\omega\in L^1(\G_1)_+ \big\}. \]
Then $\mf m_{\psi_1\otimes\iota}^+$ is a hereditary cone in
$(L^\infty(\G_1)\vnten L^\infty(\G_1))^+$.  Let $\mf m_{\psi_1\otimes\iota}$
be the $*$-subalgebra generated by $\mf m_{\psi_1\otimes\iota}^+$; this
agrees with the linear span of $\mf m_{\psi_1\otimes\iota}^+$.  There is
a linear map
\[ (\psi_1\otimes\iota) : \mf m_{\psi_1\otimes\iota}
\rightarrow L^\infty(\G_1) \quad\text{with}\quad
\ip{ (\psi_1\otimes\iota)(x) }{\omega}
= \psi_1\big( (\iota\otimes\omega)x \big). \]
We then set
\[ \mf n_{\psi_1\otimes\iota} = \big\{ x\in L^\infty(\G_1)
\vnten L^\infty(\G_1) : x^*x \in \mf m_{\psi_1\otimes\iota}^+ \big\}. \]
This is a left ideal in $L^\infty(\G_1)\vnten L^\infty(\G_1)$, and
$\mf m_{\psi_1\otimes\iota}$ is the linear span of
$\mf n_{\psi_1\otimes\iota}^* \mf n_{\psi_1\otimes\iota}$.

As $\psi_1$ is right-invariant, a simple calculation shows that for
$a,b\in\mf n_{\psi_1}$, we have that $\Delta(b)\in\mf n_{\psi_1\otimes\iota}$
and that $a\otimes 1 \in \mf n_{\psi_1\otimes\iota}$.  Thus
$(a^*\otimes 1)\Delta(b) \in \mf m_{\psi_1\otimes\iota}$, and
similarly $\Delta(a^*)(b\otimes 1) \in \mf m_{\psi_1\otimes\iota}$.

In particular, for $a,b\in\mc T_{\psi_1}$, we can make sense of the formula
\[ R_1\big( (\psi_1\otimes\iota)((a\otimes 1)\Delta_1(b)) \big)
= (\psi_1\otimes\iota)\big( \Delta_1(\sigma^{\psi_1}_{-i/2}(a))
(\sigma^{\psi_1}_{-i/2}(b)\otimes 1) \big). \]

\begin{lemma}\label{lem:one}
Let $a\in\mc T_{\psi_1}$ and $b\in\mc T_{\psi_1}q$.  Then
\begin{gather*} T\big( (\psi_1\otimes\iota)((b\otimes 1)\Delta_1(a)) \big)
= (\psi_2\otimes\iota)\big(  (T(b)\otimes 1)\Delta_2(T(a)) \big), \\
T\big( (\psi_1\otimes\iota)(\Delta_1(a)(b\otimes 1)) \big)
= (\psi_2\otimes\iota)\big(  \Delta_2(T(a))(T(b)\otimes 1) \big)
\end{gather*}
\end{lemma}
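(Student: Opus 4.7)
My plan is to verify both identities by testing the $L^\infty(\G_1)$-valued element on the left against an arbitrary $\omega'\in L^1(\G_2)$, pushing the slice through the decomposition of $T$, and then reassembling. Fix $\omega'$ and set $\omega = T_*(\omega')\in L^1(\G_1)$. Writing $y = (\psi_1\otimes\iota)((b\otimes 1)\Delta_1(a))$, one has
\[ \ip{T(y)}{\omega'} = \ip{y}{\omega} = \psi_1\bigl( b\cdot (\iota\otimes\omega)\Delta_1(a) \bigr), \]
since $b\otimes 1$ sits entirely in the slicing leg and can be factored out.

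The decisive use of the hypothesis $b\in\mc T_{\psi_1}q$ enters now: because $b$ lies in the ``homomorphic summand'' $qL^\infty(\G_1)$, the Jordan map $T$ is genuinely multiplicative against $b$, i.e.\ $T(bz) = T(b)T(z)$ for every $z\in L^\infty(\G_1)$ (this is exactly the calculation recorded immediately after the decomposition lemma, using $T(b)\in pL^\infty(\G_2)$ to kill the anti-homomorphic summand). Combining this with $\psi_1 = \psi_2\circ T$ yields
\[ \ip{T(y)}{\omega'} = \psi_2\bigl( T(b)\cdot T\bigl((\iota\otimes\omega)\Delta_1(a)\bigr) \bigr). \]
To finish, I would invoke the intertwining identity $T((\iota\otimes\omega)\Delta_1(a)) = (\iota\otimes\omega')\Delta_2(T(a))$ already established as the ``simple calculation'' in the proof that $\psi_2\circ T$ is right-invariant; a direct pairing with functionals in $L^1(\G_2)$ shows it holds for arbitrary $a\in L^\infty(\G_1)$, not merely the positive ones. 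Substituting and recognising the result as a slice-map pairing gives $\ip{T(y)}{\omega'} = \ip{(\psi_2\otimes\iota)((T(b)\otimes 1)\Delta_2(T(a)))}{\omega'}$, which is the first claimed identity.

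The second identity, involving $\Delta_1(a)(b\otimes 1)$ instead, goes through by the symmetric argument: in place of $T(bz)=T(b)T(z)$ one uses $T(zb)=T(z)T(b)$, which again follows from $b=bq\in qL^\infty(\G_1)$ via the decomposition $T = T_p\oplus T_{1-p}$. The rest of the chain is identical, with $b$ now riding on the right of the slice.

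The only part where I expect genuine verification to be needed --- and what I see as the main obstacle --- is the well-definedness of the slice maps on both sides, i.e.\ that $(b\otimes 1)\Delta_1(a)\in\mf m_{\psi_1\otimes\iota}$ and $(T(b)\otimes 1)\Delta_2(T(a))\in\mf m_{\psi_2\otimes\iota}$. The former is covered by the remarks immediately preceding the lemma, since $a,b\in\mc T_{\psi_1}\subseteq\mf n_{\psi_1}\cap\mf n_{\psi_1}^*$. For the latter one uses $b=bq$ together with the fact that $T_p$ restricts to a $*$-isomorphism between $qL^\infty(\G_1)$ and $pL^\infty(\G_2)$ intertwining the restricted right Haar weights, together with Lemma~\ref{lem:three}, to transfer the required integrability of $T(b)$ and $T(a)$ from $\psi_1$ to $\psi_2$; the asymmetry between $\mf n_{\psi_2}$ and $\mf n_{\psi_2}^*$ introduced by the anti-multiplicative piece $T_{1-p}$ is absorbed by $a\in\mc T_{\psi_1}\subseteq\mf n_{\psi_1}\cap\mf n_{\psi_1}^*$.
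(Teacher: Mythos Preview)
Your argument is correct and essentially matches the paper's proof: both pair against $\omega'\in L^1(\G_2)$, use that $b\in qL^\infty(\G_1)$ forces $T(bz)=T(b)T(z)$, invoke the coproduct intertwining coming from $T_*$ being multiplicative, and then apply $\psi_1=\psi_2\circ T$. Two small differences are worth noting. First, the paper orders the computation so that well-definedness on the $\G_2$ side is automatic: it first shows $T\bigl((\iota\otimes T_*(\omega))((b\otimes 1)\Delta_1(a))\bigr)=(\iota\otimes\omega)\bigl((T(b)\otimes 1)\Delta_2(T(a))\bigr)$ as elements of $L^\infty(\G_2)$, and only then applies $\psi_2$, using $\psi_1=\psi_2\circ T$ to see that the result is finite because the $\G_1$ side already lies in $\mf m_{\psi_1}$; this sidesteps the integrability check you flag as the main obstacle. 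Second, for the second displayed identity the paper does not rerun the symmetric argument with $T(zb)=T(z)T(b)$ as you propose, but instead observes $(\psi_2\otimes\iota)(x^*)=(\psi_2\otimes\iota)(x)^*$, applies the first identity to $a^*,b^*\in\mc T_{\psi_1}$, and takes adjoints. Both routes are valid.
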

\begin{proof}
Let $\omega,\omega'\in L^1(\G_2)$.  Then, for $x\in L^\infty(\G_1)$,
\[ \ip{x}{T_*(\omega')b} = \ip{T(bx)}{\omega'} = \ip{T(b)T(x)}{\omega'}
= \ip{x}{T_*(\omega' T(b))}, \]
using that $b\in L^\infty(\G_1)q$.  Thus
\begin{align*}
& \ip{T\big( (\iota\otimes T_*(\omega))((b\otimes 1)\Delta_1(a)) \big)}{\omega'}
= \ip{(b\otimes 1)\Delta_1(a)}{T_*(\omega') \otimes T_*(\omega)} \\
&\qquad= \ip{\Delta_1(a)}{T_*(\omega' T(b)) \otimes T_*(\omega)} 
= \ip{T(a)}{(\omega' T(b))\omega}
= \ip{(T(b)\otimes 1)\Delta_2(T(a))}{\omega'\otimes\omega}. \end{align*}
Hence finally
\begin{align*}
& \ip{T\big( (\psi_1\otimes\iota)((b\otimes 1)\Delta_1(a)) \big)}{\omega}
= \psi_1\big( (\iota\otimes T_*(\omega))((b\otimes 1)\Delta_1(a)) \big) \\
&\qquad= \psi_2\big( (\iota\otimes\omega)(T(b)\otimes 1)\Delta_2(T(a)) \big)
= \ip{(\psi_2\otimes\iota)\big(  (T(b)\otimes 1)\Delta_2(T(a)) \big)}{\omega},
\end{align*}
as required.

Now, as $\psi_2$ is a weight, we have that $\psi_2(x^*)=\overline{\psi_2(x)}$ for
$x\in\mf m_{\psi_2}$.  We can also verify that $(\iota\otimes\omega)(x^*)
= (\iota\otimes\omega^*)(x)^*$ for $x\in\mf m_{\psi_2\otimes\iota}$ and
$\omega\in L^1(\G_2)^+$.  It follows that $(\psi_2\otimes\iota)(x^*)
= (\psi_2\otimes\iota)(x)^*$.  As $\mc T_{\psi_1}$ is a $*$-algebra,
and $T$ respects the involution, applying this calculation to $a^*$ and $b^*$
and then taking the adjoint yields the second claimed equality.
\end{proof}

\begin{lemma}\label{lem:two}
Let $a\in\mc T_{\psi_1}$ and $b\in\mc T_{\psi_1}(1-q)$.  Then
\begin{gather*} T\big( (\psi_1\otimes\iota)((b\otimes 1)\Delta_1(a)) \big)
= (\psi_2\otimes\iota)\big(  \Delta_2(T(a))(T(b)\otimes 1) \big), \\
T\big( (\psi_1\otimes\iota)(\Delta_1(a)(b\otimes 1)) \big)
= (\psi_2\otimes\iota)\big(  (T(b)\otimes 1) \Delta_2(T(a))  \big).
\end{gather*}
\end{lemma}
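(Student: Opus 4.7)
The plan is to run the argument of Lemma~\ref{lem:one} almost verbatim, adjusting only for the fact that on the support of $b$ the map $T$ now acts as an anti-homomorphism rather than a homomorphism; this is exactly what forces $T(b)\otimes 1$ to migrate to the opposite side of $\Delta_2(T(a))$ in the conclusion.

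The first thing I would do is establish the analogue of the opening identity $T_*(\omega')b = T_*(\omega'T(b))$ used in Lemma~\ref{lem:one}. Since $1-q$ is central and $b = b(1-q) = (1-q)b$, for any $x \in L^\infty(\G_1)$ we have $bx \in (1-q)L^\infty(\G_1)$, so
\[ T(bx) = T_{1-p}(bx) = T_{1-p}(x)\, T_{1-p}(b) = T(x)\, T(b), \]
the middle equality being the anti-multiplicativity of $T_{1-p}$. Dualising,
\[ \ip{x}{T_*(\omega')b} = \ip{bx}{T_*(\omega')} = \ip{T(x)T(b)}{\omega'} = \ip{x}{T_*(T(b)\omega')}, \]
so $T_*(\omega')b = T_*(T(b)\omega')$ for all $\omega' \in L^1(\G_2)$. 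The only difference from Lemma~\ref{lem:one} is that $T(b)$ now appears on the \emph{left} of $\omega'$ instead of on the right.

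With this substitution in hand, I would repeat the duality computation of Lemma~\ref{lem:one} line by line:
\begin{align*}
\ip{(b\otimes 1)\Delta_1(a)}{T_*(\omega')\otimes T_*(\omega)}
&= \ip{\Delta_1(a)}{T_*(T(b)\omega')\otimes T_*(\omega)} \\
&= \ip{T(a)}{(T(b)\omega')\omega} \\
&= \ip{\Delta_2(T(a))(T(b)\otimes 1)}{\omega'\otimes\omega},
\end{align*}
where the final step uses the convention $(c\omega)(y) = \omega(yc)$, which transports left multiplication by $T(b)$ on the functional into right multiplication by $T(b)\otimes 1$ on $\Delta_2(T(a))$. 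Inserting this into the chain
\[ \ip{T\bigl((\psi_1\otimes\iota)((b\otimes 1)\Delta_1(a))\bigr)}{\omega} = \psi_2\bigl(T((\iota\otimes T_*(\omega))((b\otimes 1)\Delta_1(a)))\bigr) \]
(which uses $\psi_1 = \psi_2\circ T$ exactly as in Lemma~\ref{lem:one}) yields the first claimed equality.

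The second equality I would derive in the same way as at the end of the proof of Lemma~\ref{lem:one}: since $\mc T_{\psi_1}$ is a $*$-algebra and $1-q$ is central, one has $a^* \in \mc T_{\psi_1}$ and $b^* \in \mc T_{\psi_1}(1-q)$; apply the first equality to the pair $(a^*,b^*)$, take adjoints, and use $(\psi_2\otimes\iota)(x^*) = (\psi_2\otimes\iota)(x)^*$ together with the fact that $T$ respects the involution. The only delicate point in the whole argument is keeping the left/right conventions for the $L^\infty$-module structure on $L^1$ straight; once that is pinned down, the switch from ``homomorphism'' to ``anti-homomorphism'' mechanically flips the placement of $T(b)\otimes 1$, which is exactly the asymmetry between the statements of Lemmas~\ref{lem:one} and \ref{lem:two}.
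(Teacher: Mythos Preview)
Your argument is correct and follows exactly the approach of the paper: the paper's proof simply says ``as in the previous proof'' one checks $T_*(\omega')b = T_*(T(b)\omega')$, deduces the displayed identity for $T\big((\iota\otimes T_*(\omega))((b\otimes 1)\Delta_1(a))\big)$, and then takes adjoints for the second equality. You have merely written out the details the paper leaves implicit, including the justification $T(bx)=T_{1-p}(bx)=T_{1-p}(x)T_{1-p}(b)=T(x)T(b)$, and your handling of the module conventions is correct.
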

\begin{proof}
As in the previous proof, but now using that $b\in L^\infty(\G_1)(1-q)$,
we check that for $\omega,\omega'\in L^1(\G_2)$, we have that
$T_*(\omega')b = T_*\big( T(b) \omega' \big)$, which leads to
\[ T\big( (\iota\otimes T_*(\omega))((b\otimes 1)\Delta_1(a)) \big)
= (\iota\otimes\omega) \big( \Delta_2(T(a)) (T(b)\otimes 1) \big), \]
which gives the first result.  The second equality now follows by
taking adjoints.
\end{proof}

\begin{proposition}\label{prop:three}
As before, let $T' = T^{-1} R_2 T R_1$.  If $a,b\in\mc T_{\psi_1}q$
or $a,b\in\mc T_{\psi_1}(1-q)$, we have that
\[ T'\big( (\psi_1\otimes\iota)((b\otimes 1)\Delta_1(a)) \big)
= (\psi_1\otimes\iota)((b\otimes 1)\Delta_1(a)). \]
\end{proposition}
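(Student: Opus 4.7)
The plan is to compute $T'(X)$ for $X = (\psi_1 \otimes \iota)((b \otimes 1)\Delta_1(a))$ by applying $R_1$, $T$, $R_2$, $T^{-1}$ in sequence. The $R_i$ steps are handled by the Haar weight characterisation \cite[Proposition~5.20]{kv}, the $T^{\pm 1}$ steps by Lemmas~\ref{lem:one}--\ref{lem:two}, and the residual modular terms by Lemma~\ref{lem:three}. The whole point of splitting into the $q$ and $1-q$ cases is that the sign flip in Lemma~\ref{lem:three} on the anti-homomorphism summand is precisely what is needed for the two occurrences of $\sigma_{\pm i/2}^{\psi_1}$ and $\sigma_{\pm i/2}^{\psi_2}$ to cancel.

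In the case $a,b \in \mc T_{\psi_1}q$, set $a' = \sigma^{\psi_1}_{-i/2}(a)$, $b' = \sigma^{\psi_1}_{-i/2}(b)$; both remain in $\mc T_{\psi_1}q$ since $\sigma^{\psi_1}_t$ preserves the summand $qL^\infty(\G_1)$. Then \cite[Proposition~5.20]{kv} gives $R_1(X) = (\psi_1\otimes\iota)(\Delta_1(b')(a'\otimes 1))$; the second formula of Lemma~\ref{lem:one} (with its $a$ replaced by $b'$ and its $b$ by $a'$) yields $TR_1(X) = (\psi_2\otimes\iota)(\Delta_2(T(b'))(T(a')\otimes 1))$. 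Applying $R_2$ via the $R^2 = \iota$ rearrangement of \cite[Proposition~5.20]{kv} produces
\[ R_2TR_1(X) = (\psi_2\otimes\iota)((\sigma^{\psi_2}_{i/2}(T(b'))\otimes 1)\Delta_2(\sigma^{\psi_2}_{i/2}(T(a')))). \]
Since $T(a'), T(b') \in pL^\infty(\G_2)$, Lemma~\ref{lem:three} (analytically extended) gives $\sigma^{\psi_2}_{i/2}\circ T = T\circ\sigma^{\psi_1}_{i/2}$ on $qL^\infty(\G_1)$, collapsing $\sigma^{\psi_1}_{i/2}(a') = a$ and similarly for $b$. The first formula of Lemma~\ref{lem:one} then identifies $T^{-1}$ of $(\psi_2\otimes\iota)((T(b)\otimes 1)\Delta_2(T(a)))$ with $X$.

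The case $a,b \in \mc T_{\psi_1}(1-q)$ follows the same template, but Lemma~\ref{lem:two} replaces Lemma~\ref{lem:one} at the $T$ step, swapping the tensor order to give $TR_1(X) = (\psi_2\otimes\iota)((T(a')\otimes 1)\Delta_2(T(b')))$. Here $R_2$ can be applied directly (not inverted) via \cite[Proposition~5.20]{kv} with $c^* = T(a')$, $d = T(b')$, yielding $(\psi_2\otimes\iota)(\Delta_2(\sigma^{\psi_2}_{-i/2}(T(a')))(\sigma^{\psi_2}_{-i/2}(T(b'))\otimes 1))$. Now Lemma~\ref{lem:three} contributes the crucial sign flip $\sigma^{\psi_2}_{-i/2}\circ T_{1-p} = T_{1-p}\circ\sigma^{\psi_1}_{i/2}$, so again $\sigma^{\psi_2}_{-i/2}(T(a')) = T(a)$ and similarly for $b$; and the first formula of Lemma~\ref{lem:two} recovers $X$ under $T^{-1}$. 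The principal obstacle in executing this plan is precisely the sign bookkeeping: one must pair the ``original'' versus ``inverted'' forms of the $R$-formula against the two intertwinings in Lemma~\ref{lem:three}, and check in each case that $\sigma^{\psi_1}_{i/2}\circ\sigma^{\psi_1}_{-i/2}$ collapses to the identity with no leftover modular terms.
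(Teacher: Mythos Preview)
Your proof is correct and uses the same ingredients as the paper: the characterisation of $R_i$ via \cite[Proposition~5.20]{kv}, Lemmas~\ref{lem:one}--\ref{lem:two} for the $T$ step, and Lemma~\ref{lem:three} for the modular bookkeeping. The organisation differs slightly---you apply $R_1,T,R_2,T^{-1}$ in sequence and simplify at the end, whereas the paper intertwines Lemma~\ref{lem:three} immediately after $T$, takes adjoints, and then recognises the resulting expression as $R_2T(X)$ directly (thereby proving $TR_1=R_2T$ on these elements without explicitly inverting $T$)---but the computations are equivalent.
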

\begin{proof}
Suppose that $a,b\in \mc T_{\psi_1}(1-q)$, the other case being analogous.
We have that
\begin{align*}
& T R_1 \big( (\psi_1\otimes\iota)((b\otimes 1)\Delta_1(a)) \big)
= T\big( (\psi_1\otimes\iota)\big( \Delta_1(\sigma^{1,1-q}_{-i/2}(b))
(\sigma^{1,1-q}_{-i/2}(a)\otimes 1) \big) \big) \\
&\qquad = (\psi_2\otimes\iota)\big( (T_{1-p}\sigma^{1,1-q}_{-i/2}(a)\otimes 1)
\Delta_2(T_{1-p}\sigma^{1,1-q}_{-i/2}(b)) \big) \\
&\qquad= (\psi_2\otimes\iota)\big( (\sigma^{2,1-p}_{i/2}T_{1-p}(a)\otimes 1)
\Delta_2(\sigma^{2,1-p}_{i/2}T_{1-p}(b)) \big)
\end{align*}
using first Lemma~\ref{lem:two} (applied to $\sigma^{1,1-q}_{-i/2}(a)
\in \mc T_{\psi_1}(1-q)$) and then Lemma~\ref{lem:three}.

Thus, taking adjoints gives that
\begin{align*}
& T R_1 \big( (\psi_1\otimes\iota)((b\otimes 1)\Delta_1(a)) \big)
= (\psi_2\otimes\iota)\big( \Delta_2(\sigma^{2,1-p}_{-i/2}T_{1-p}(b^*))
(\sigma^{2,1-p}_{-i/2}T_{1-p}(a^*)\otimes 1) \big) ^* \\
&\qquad= R_2\big( (\psi_2\otimes\iota)\big( (T_{1-p}(b^*)\otimes 1)
\Delta_2(T_{1-p}(a^*)) \big) ^*\big) \\
&\qquad= R_2 T \big( (\psi_1\otimes\iota)\big( \Delta_1(a^*)(b^*\otimes 1)
   \big)^* \big)
= R_2 T \big( (\psi_1\otimes\iota) \big((b\otimes 1)\Delta_1(a)\big) \big),
\end{align*}
as required.
\end{proof}

\begin{proposition}\label{prop:four}
As before, let $T' = T^{-1} R_2 T R_1$.  If $a\in\mc T_{\psi_1}(1-q)$ and
$b\in \mc T_{\psi_1}q$, or vice versa, we have that
\begin{gather*} T'\big( (\psi_1\otimes\iota)((b\otimes 1)\Delta_1(a)) \big)
= (\psi_1\otimes\iota)(\Delta_1(a)(\sigma^{\psi_1}_{-i}(b)\otimes 1)), \\
T'\big( (\psi_1\otimes\iota)(\Delta_1(a)(b\otimes 1)) \big)
= (\psi_1\otimes\iota)((\sigma^{\psi_1}_{i}(b)\otimes 1)\Delta_1(a)).
\end{gather*}
\end{proposition}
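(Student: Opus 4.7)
The plan is to trace $T' = T^{-1} R_2 T R_1$ through the expression $(\psi_1\otimes\iota)((b\otimes 1)\Delta_1(a))$ step by step, following the same strategy as the proof of Proposition~\ref{prop:three}, but tracking carefully the asymmetries introduced by having $a$ and $b$ on opposite sides of the central decomposition given by $q$. I would handle the case $a\in\mc T_{\psi_1}(1-q)$, $b\in\mc T_{\psi_1}q$ explicitly; the ``vice versa'' case proceeds by the same chain with the roles of Lemmas~\ref{lem:one} and~\ref{lem:two} swapped, and the second displayed equality in each case follows from the first by taking adjoints --- note that $T'$ is $*$-preserving, since $T$ is a Jordan $*$-map and $R_1, R_2$ are anti-$*$-automorphisms, while $q, 1-q$ are self-adjoint.

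The concrete sequence of moves is: first, apply the Kustermans--Vaes formula for $R_1$, using that $q$ is fixed by $(\sigma^{\psi_1}_t)$ (because $q$ is central and $\psi_1$ splits along $q, 1-q$) so that the analytic continuations $\sigma^{\psi_1}_{-i/2}(a)$ and $\sigma^{\psi_1}_{-i/2}(b)$ remain in their respective summands; next, push $T$ past the slice via Lemma~\ref{lem:two} (not Lemma~\ref{lem:one}), since the outside factor now sits on the $(1-q)$ side where $T$ is an anti-homomorphism --- this is the crucial step that reverses the product order, turning $\Delta_1(\cdot)(\cdot\otimes 1)$ into $(\cdot\otimes 1)\Delta_2(\cdot)$; then use Lemma~\ref{lem:three} to pull the modular half-shifts out through $T$, flipping sign on the $(1-p)$ summand; apply the $R_2$ formula, introducing another pair of $\sigma^{\psi_2}_{-i/2}$ shifts, which on the $(1-p)$ summand cancel the pre-existing $+i/2$ shifts and on the $p$ summand compound with the $-i/2$ shifts to give a full $\sigma^{2,p}_{-i}$; finally invert $T$ using Lemma~\ref{lem:one} (applicable because the outside factor has now migrated to the $p$ side), converting $\sigma^{2,p}_{-i}$ back to $\sigma^{\psi_1}_{-i}$ via Lemma~\ref{lem:three}.

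The main obstacle is just the combinatorial bookkeeping of product-order swaps and modular-group sign flips. The conceptual punchline --- and the reason the right-hand side of this proposition differs from the identity that appeared in Proposition~\ref{prop:three} --- is that in the same-side case the two half-shifts from $R_1$ and $R_2$ either both flip sign under Lemma~\ref{lem:three} or neither does, and so cancel; here, because $a$ and $b$ sit on opposite sides, exactly one of the two shifts flips, so rather than cancelling they accumulate to the full $\sigma^{\psi_1}_{-i}$ factor appearing in the statement.
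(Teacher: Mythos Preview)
Your proposal is correct and follows the paper's approach almost exactly: apply the $R_1$ formula, push $T$ through via Lemma~\ref{lem:two} (since the outer factor $\sigma^{\psi_1}_{-i/2}(a)$ lies in $(1-q)$), adjust modular shifts via Lemma~\ref{lem:three}, handle $R_2$, and then undo $T$ via Lemma~\ref{lem:one}. The only cosmetic difference is at the $R_2$ step: the paper rewrites $TR_1(\cdots)$ as an adjoint so as to recognise it as $R_2T$ applied to the desired right-hand side, whereas you apply the $R_2$ formula forward to $TR_1(\cdots)$ and then invert $T$; the modular bookkeeping you describe (the $+i/2$ on the $(1-p)$ side cancelling, the $-i/2$ on the $p$ side compounding to $-i$) is precisely what comes out either way. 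Your remarks on the vice versa case, on obtaining the second equality by taking adjoints, and on $T'$ being $*$-preserving are all correct and match the paper.
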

\begin{proof}
Suppose that $a\in\mc T_{\psi_1}(1-q)$ and $b\in \mc T_{\psi_1}q$, so we can
follow the previous proof through to get that
\[ T R_1 \big( (\psi_1\otimes\iota)((b\otimes 1)\Delta_1(a)) \big)
= (\psi_2\otimes\iota)\big( (\sigma^{2,1-p}_{i/2}T_{1-p}(a)\otimes 1)
\Delta_2(\sigma^{2,p}_{-i/2}T_{p}(b)) \big), \]
where here we remember that $b\in \mc T_{\psi_1}q$.  Thus
\begin{align*}
& T R_1 \big( (\psi_1\otimes\iota)((b\otimes 1)\Delta_1(a)) \big)
= (\psi_2\otimes\iota)\big( \Delta_2(\sigma^{2,p}_{i/2}T_{p}(b^*))
(\sigma^{2,1-p}_{-i/2}T_{1-p}(a^*)\otimes\iota) \big) ^* \\
&\qquad= R_2\big( (\psi_2\otimes\iota)\big( (\sigma_i^{2,p}(T_{p}(b^*))\otimes 1)
\Delta_2(T_{1-p}(a^*)) \big) ^*\big) \\
&\qquad= R_2 T \big( (\psi_1\otimes\iota)\big( (\sigma_i^{1,q}(b^*)\otimes 1)
   \Delta_1(a^*) \big)^* \big)
= R_2 T \big( (\psi_1\otimes\iota) \big(\Delta_1(a)
   (\sigma^{\psi_1}_{-i}(b)\otimes 1) \big) \big),
\end{align*}
as required, using Lemma~\ref{lem:one}.  The case when $a\in\mc T_{\psi_1}q$
and $b\in\mc T_{\psi_1}(1-q)$ follows similarly.  Again, taking adjoints
(and remembering that $\sigma^{\psi_1}_i(b)^* = \sigma^{\psi_1}_{-i}(b^*)$
gives the second claimed equality).
\end{proof}

\begin{corollary}\label{corr:one}
We have that $T'^2 = \iota$.
\end{corollary}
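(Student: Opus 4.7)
The plan is to show $T'^2 = \iota$ on a $\sigma$-weakly dense subspace of $L^\infty(\G_1)$; since $T'$ is normal (as already noted after Proposition~\ref{prop:five}), so is $T'^2$, and the equality will then extend to all of $L^\infty(\G_1)$. The generators I would use are the same ones that appear in the characterisation of the unitary antipode: the elements
\[ X(a,b) := (\psi_1\otimes\iota)\bigl((b\otimes 1)\Delta_1(a)\bigr), \qquad a,b\in \mc T_{\psi_1}, \]
span a $\sigma$-weakly dense subspace of $L^\infty(\G_1)$ — indeed, they form a core for $S_1$, by the von Neumann algebraic version of \cite[Proposition~5.33]{kv}.

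Given such an $X(a,b)$, I would split $a = a_1 + a_2$ with $a_1 = aq \in \mc T_{\psi_1}q$ and $a_2 = a(1-q) \in \mc T_{\psi_1}(1-q)$, and similarly $b = b_1 + b_2$. By bilinearity, $X(a,b) = \sum_{i,j=1}^{2} X(a_i, b_j)$. The two ``diagonal'' terms $X(a_1,b_1)$ and $X(a_2,b_2)$ are fixed by $T'$ directly, thanks to Proposition~\ref{prop:three}, and hence are fixed by $T'^2$.

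For an ``off-diagonal'' term $X(a_i, b_j)$ with $i \neq j$, the first formula of Proposition~\ref{prop:four} gives
\[ T'\bigl(X(a_i, b_j)\bigr) = Y\bigl(a_i, \sigma^{\psi_1}_{-i}(b_j)\bigr), \qquad Y(a,b) := (\psi_1\otimes\iota)\bigl(\Delta_1(a)(b\otimes 1)\bigr). \]
To invoke Proposition~\ref{prop:four} a second time I need $\sigma^{\psi_1}_{-i}(b_j)$ to lie in $\mc T_{\psi_1}$ and in the same component as $b_j$ (so opposite to $a_i$). The former is a defining property of the Tomita algebra. The latter follows because $q$ lies in the modular centralizer of $\psi_1$ — a direct consequence of the decomposition $\psi_1 = \psi_1^q \oplus \psi_1^{1-q}$ introduced before Lemma~\ref{lem:three} — so that $\sigma^{\psi_1}_t$ preserves each of $\mc T_{\psi_1}q$ and $\mc T_{\psi_1}(1-q)$. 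The second formula of Proposition~\ref{prop:four} then yields
\[ T'^2\bigl(X(a_i,b_j)\bigr) = X\bigl(a_i, \sigma^{\psi_1}_i \sigma^{\psi_1}_{-i}(b_j)\bigr) = X(a_i,b_j), \]
completing the verification on the generators.

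The main obstacle is really just the bookkeeping around the interaction of the Tomita algebra, the central splitting induced by $q$, and the analytic extensions $\sigma^{\psi_1}_{\pm i}$, needed to legitimise the second invocation of Proposition~\ref{prop:four}. Once that is in place, Propositions~\ref{prop:three} and~\ref{prop:four} combine cleanly to give $T'^2 = \iota$ on all generators $X(a,b)$, and normality then extends the identity to the whole of $L^\infty(\G_1)$.
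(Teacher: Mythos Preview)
Your proof is correct and follows essentially the same route as the paper's: both reduce to the dense set of elements $(\psi_1\otimes\iota)((b\otimes 1)\Delta_1(a))$, split $a$ and $b$ along the central projection $q$, and then appeal to Proposition~\ref{prop:three} for the diagonal pieces and Proposition~\ref{prop:four} (applied twice) for the off-diagonal ones. You have simply made explicit the bookkeeping---that $\sigma^{\psi_1}_{-i}(b_j)$ stays in $\mc T_{\psi_1}$ and in the same $q$-component---which the paper leaves implicit.
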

\begin{proof}
By density, it is enough to verify these identities on
elements of the form $(\psi_1\otimes\iota)((b\otimes 1)\Delta_1(a))$
for $a,b\in\mc T_{\psi_1}$.  By linearity, we may suppose that
$a,b\in\mc T_{\psi_1}q$ or $a,b\in\mc T_{\psi_1}(1-q)$,
in which case the result follows from Proposition~\ref{prop:three},
or that $a\in\mc T_{\psi_1}q, b\in\mc T_{\psi_1}(1-q)$ or vice versa,
in which case the result follows from Proposition~\ref{prop:four}.
\end{proof}

Finally, we wish to show that $T'$ commutes with the scaling group $(\tau_t)$.
For this, recall from \cite[Proposition~6.8]{kv} that
$\Delta_1 \sigma^{\psi_1}_t = (\sigma^{\psi_1}_t \otimes \tau_{-t})\Delta_1$.

\begin{proposition}\label{prop:six}
We have that $\tau_t T' = T' \tau_t$ for each $t\in\mathbb R$.
\end{proposition}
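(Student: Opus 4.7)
The plan is to verify the identity $\tau_t T' = T' \tau_t$ on the $\sigma$-weakly dense subspace spanned by elements $y_{a,b} := (\psi_1\otimes\iota)((b\otimes 1)\Delta_1(a))$ for $a,b\in\mc T_{\psi_1}$, exactly as in Corollary~\ref{corr:one}. By linearity we may assume each of $a,b$ lies in $\mc T_{\psi_1}q$ or in $\mc T_{\psi_1}(1-q)$, putting us in the setting of Propositions~\ref{prop:three} and~\ref{prop:four}.

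The main computational input is a formula expressing $\tau_t(y_{a,b})$ as a new generator. Rearranging the hint formula $\Delta_1\sigma^{\psi_1}_{-t} = (\sigma^{\psi_1}_{-t}\otimes\tau_t)\Delta_1$ gives $(\iota\otimes\tau_t)\Delta_1 = (\sigma^{\psi_1}_t\otimes\iota)\Delta_1\sigma^{\psi_1}_{-t}$. Combined with the routine identity $\tau_t(\psi_1\otimes\iota)(X) = (\psi_1\otimes\iota)((\iota\otimes\tau_t)(X))$ (from $\tau_t$'s normality and preadjoint) and the $\sigma^{\psi_1}$-invariance of $\psi_1$, a direct calculation yields
\[ \tau_t(y_{a,b}) = (\psi_1\otimes\iota)\bigl((\sigma^{\psi_1}_{-t}(b)\otimes 1)\Delta_1(\sigma^{\psi_1}_{-t}(a))\bigr) = y_{\sigma^{\psi_1}_{-t}(a),\,\sigma^{\psi_1}_{-t}(b)}. \]
Crucially, since $\psi_1$ decomposes as $\psi_1^q + \psi_1^{1-q}$ under the $q$-decomposition, the modular group $\sigma^{\psi_1}_t$ is block-diagonal; in particular $\sigma^{\psi_1}_{-t}(q)=q$, so $\sigma^{\psi_1}_{-t}(a)$ and $\sigma^{\psi_1}_{-t}(b)$ lie in the same summands as $a$ and $b$. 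This means Propositions~\ref{prop:three} and~\ref{prop:four} apply verbatim to the new generator.

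An identical calculation, now starting from the other generator form $(\psi_1\otimes\iota)(\Delta_1(a)(c\otimes 1))$, shows that $\tau_t$ acts on it by replacing $a,c$ with $\sigma^{\psi_1}_{-t}(a), \sigma^{\psi_1}_{-t}(c)$. Applying this to $T'(y_{a,b})$ in the mixed-summand case (Proposition~\ref{prop:four}), and using that the one-parameter group $(\sigma^{\psi_1}_s)$ is commutative so $\sigma^{\psi_1}_{-t}\sigma^{\psi_1}_{-i} = \sigma^{\psi_1}_{-i-t}$, gives
\[ \tau_t T'(y_{a,b}) = (\psi_1\otimes\iota)\bigl(\Delta_1(\sigma^{\psi_1}_{-t}(a))(\sigma^{\psi_1}_{-i-t}(b)\otimes 1)\bigr). \]
On the other hand $T'\tau_t(y_{a,b}) = T'(y_{\sigma^{\psi_1}_{-t}(a),\,\sigma^{\psi_1}_{-t}(b)})$, and Proposition~\ref{prop:four} (applicable by the previous paragraph) evaluates this to exactly the same expression. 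In the same-summand case Proposition~\ref{prop:three} instead makes $T'$ the identity on both $y_{a,b}$ and $y_{\sigma^{\psi_1}_{-t}(a),\,\sigma^{\psi_1}_{-t}(b)}$, and both sides collapse to $\tau_t y_{a,b}$.

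The main obstacle is bookkeeping: one must carefully manipulate $(\iota\otimes\tau_t)\Delta_1$ against the weight $\psi_1$ and the multiplication by $(b\otimes 1)$ or $(c\otimes 1)$. The key trick that makes everything work is the observation that $\tau_t$ acting after $(\psi_1\otimes\iota)$ is equivalent to letting $\sigma^{\psi_1}_{-t}$ act on the generator's $a$ and $b$ arguments. Since the $q$-decomposition is preserved by $\sigma^{\psi_1}$ (unlike its a priori unclear behaviour under $\tau_t$), this manoeuvre reduces the desired commutation to a direct application of Propositions~\ref{prop:three} and~\ref{prop:four} on the transformed generator, and normality plus density finish the proof.
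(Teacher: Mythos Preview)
Your proposal is correct and follows essentially the same approach as the paper's proof: both compute $\tau_t$ on the generators $(\psi_1\otimes\iota)((b\otimes 1)\Delta_1(a))$ via the relation $(\iota\otimes\tau_t)\Delta_1 = (\sigma^{\psi_1}_t\otimes\iota)\Delta_1\sigma^{\psi_1}_{-t}$ together with $\sigma^{\psi_1}$-invariance of $\psi_1$, then feed the result into Propositions~\ref{prop:three} and~\ref{prop:four} and compare. Your packaging via the formula $\tau_t(y_{a,b}) = y_{\sigma^{\psi_1}_{-t}(a),\,\sigma^{\psi_1}_{-t}(b)}$ and the explicit observation that $\sigma^{\psi_1}$ is block-diagonal (so the summand structure of $a,b$ is preserved) is a tidy abstraction of exactly the calculation the paper carries out line by line.
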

\begin{proof}
Let $a\in\mc T_{\psi_1}(1-q)$ and $b\in\mc T_{\psi_1}q$.
Then, from Proposition~\ref{prop:four},
\begin{align*}
\tau_t T'\big( (\psi_1\otimes\iota)((b\otimes 1)\Delta_1(a)) \big)
= \tau_t(\psi_1\otimes\iota)(\Delta_1(a)(\sigma^{\psi_1}_{-i}(b)\otimes 1)).
\end{align*}
Now, for $\omega,\omega'\in L^1(\G_1)$,
\begin{align*}
&\ip{ (\iota\otimes \omega\circ\tau_t)(\Delta_1(a)
   (\sigma^{\psi_1}_{-i}(b)\otimes 1)) }{\omega'}
= \ip{(\iota\otimes\tau_t)\Delta_1(a)}
   {\sigma^{\psi_1}_{-i}(b)\omega'\otimes\omega} \\
&\qquad = \ip{(\sigma^{\psi_1}_t\otimes\iota)\Delta_1(\sigma^{\psi_1}_{-t}(a))}
   {\sigma^{\psi_1}_{-i}(b)\omega'\otimes\omega}
= \ip{(\sigma^{\psi_1}_t\otimes\iota)\big(\Delta_1(\sigma^{\psi_1}_{-t}(a))
   (\sigma^{\psi_1}_{-t-i}(b)\otimes 1)\big)}
   {\omega'\otimes\omega} \\
&\qquad = \ip{ \sigma^{\psi_1}_t\big( (\iota\otimes\omega)
   \big(\Delta_1(\sigma^{\psi_1}_{-t}(a))(\sigma^{\psi_1}_{-t-i}(b)\otimes 1)\big)
   \big) }{\omega'}
\end{align*}
Thus also
\begin{align*} &\ip{ \tau_t(\psi_1\otimes\iota)((b\otimes 1)\Delta_1(a)) }{\omega}
= \psi_1\big( (\iota\otimes \omega\circ\tau_t)(\Delta_1(a)
   (\sigma^{\psi_1}_{-i}(b)\otimes 1)) \big) \\
&\qquad = \psi_1\big( \sigma^{\psi_1}_t\big( (\iota\otimes\omega)
   \big(\Delta_1(\sigma^{\psi_1}_{-t}(a))(\sigma^{\psi_1}_{-t-i}(b)\otimes 1)\big)
   \big) \big) \\
&\qquad = \ip{(\psi_1\otimes\iota)
   \big(\Delta_1(\sigma^{\psi_1}_{-t}(a))(\sigma^{\psi_1}_{-t-i}(b)\otimes 1)\big)}
   {\omega},
\end{align*}
and so we conclude that
\begin{align*} &\tau_t T'\big( (\psi_1\otimes\iota)((b\otimes 1)\Delta_1(a)) \big)
= (\psi_1\otimes\iota)
   \big(\Delta_1(\sigma^{\psi_1}_{-t}(a))(\sigma^{\psi_1}_{-t-i}(b)\otimes 1)\big).
\end{align*}

Similarly, we find that
\begin{align*}
&\ip{ (\iota\otimes \omega\circ\tau_t)((b\otimes 1)\Delta_1(a))}{\omega'}
= \ip{ (\iota\otimes\tau_t)\Delta_1(a) }{ \omega'b \otimes \omega} \\
&\qquad = \ip{(\sigma^{\psi_1}_t\otimes\iota)\Delta_1(\sigma^{\psi_1}_{-t}(a))}
   {\omega'b \otimes \omega}
= \ip{(\sigma^{\psi_1}_t\otimes\iota)
   \big((\sigma^{\psi_1}_{-t}(b)\otimes 1)\Delta_1(\sigma^{\psi_1}_{-t}(a))\big)}
   {\omega' \otimes \omega}.
\end{align*}
So arguing similarly,
\begin{align*}
&T' \tau_t \big( (\psi_1\otimes\iota)((b\otimes 1)\Delta_1(a)) \big)
= T' \big( (\psi_1\otimes\iota)((\sigma^{\psi_1}_{-t}(b)\otimes 1)
   \Delta_1(\sigma^{\psi_1}_{-t}(a))) \big) \\
&\qquad = (\psi_1\otimes\iota)
   \big(\Delta_1(\sigma^{\psi_1}_{-t}(a))(\sigma^{\psi_1}_{-t-i}(b)\otimes 1)\big)
=\tau_t T'\big( (\psi_1\otimes\iota)((b\otimes 1)\Delta_1(a)) \big).
\end{align*}

The same argument works if $a\in\mc T_{\psi_1}q$ and $b\in\mc T_{\psi_1}(1-q)$.
Similarly, by using Proposition~\ref{prop:three}, a similar calculation
works for $a,b\in \mc T_{\psi_1}q$ or $a,b\in \mc T_{\psi_1}(1-q)$.
By linearity and density, the result follows.
\end{proof}

\subsection{The main result}

We are now in a position to state and prove our main result.

\begin{theorem}\label{thm:main}
Let $T_*:L^1(\G_2) \rightarrow L^1(\G_1)$ be an isometric algebra
isomorphism.  Then $u = T(1)\in L^\infty(\G_2)$ is a member of the intrinsic
group, and there is a quantum group isomorphism, or quantum group commutant
isomorphism, $\theta:L^\infty(\G_1) \rightarrow L^\infty(\G_2)$ such that
\[ T_*(\omega) = \theta_*(u\omega) \qquad (\omega\in L^\infty(\G_2)). \]
In particular, $\G_1$ is isomorphic to either $\G_2$ or $\G_2'$.
\end{theorem}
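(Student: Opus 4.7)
The plan has three main steps: identify $u := T(1)$ with a member of the intrinsic group of $\G_2$ and reduce to the case $T(1) = 1$; show that the auxiliary map $T' := T^{-1} R_2 T R_1$ is the identity on $L^\infty(\G_1)$; and invoke Proposition~\ref{prop:five} to conclude. Since $\Delta_1(1) = 1\otimes 1$, the constant $1\in L^\infty(\G_1)$ is itself a character of $L^1(\G_1)$, so that for $\omega,\omega' \in L^1(\G_2)$ we have
\[
\langle u, \omega\omega'\rangle = \langle 1, T_*(\omega)T_*(\omega')\rangle = \langle 1, T_*(\omega)\rangle\langle 1, T_*(\omega')\rangle = \langle u, \omega\rangle\langle u, \omega'\rangle,
\]
so $u$ is a character of $L^1(\G_2)$. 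Theorem~\ref{prop:one} then identifies $u$ as a unitary in the intrinsic group of $\G_2$ and guarantees that $\omega\mapsto u^*\omega$ is an isometric algebra automorphism of $L^1(\G_2)$. Replacing $T_*$ by $\omega\mapsto T_*(u^*\omega)$, whose adjoint $T_1(x) = T(x)u^*$ sends $1$ to $1$, reduces the problem to the unital case. By Kadison's theorem, $T$ is then a Jordan $*$-isomorphism, decomposing as $T_p\oplus T_{1-p}$ for a central projection $p \in L^\infty(\G_2)$ with $q = T^{-1}(p)$ central in $L^\infty(\G_1)$.

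Next, I would assemble the results of the preceding subsection to show $T' = \iota$. Propositions~\ref{prop:three} and~\ref{prop:four} give explicit formulas for $T'$ on the $\sigma$-weakly dense subspace spanned by elements $(\psi_1\otimes\iota)((b\otimes 1)\Delta_1(a))$ for $a,b\in\mc T_{\psi_1}$: on same-component elements (both in $\mc T_{\psi_1}q$ or both in $\mc T_{\psi_1}(1-q)$) $T'$ acts as the identity, while on mixed-component elements it acts by a twist involving $\sigma^{\psi_1}_{\pm i}$. Combining the involution property $T'^2 = \iota$ of Corollary~\ref{corr:one} with the commutation $T'\tau_t = \tau_t T'$ of Proposition~\ref{prop:six}, one forces the twist on the mixed terms to collapse to the identity as well, so that $T' = \iota$ on a dense subspace and hence on all of $L^\infty(\G_1)$.

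Once $T' = \iota$ is in hand, Proposition~\ref{prop:five} applies (with $T'$ trivially a homomorphism) and yields that $T$ is either a $*$-homomorphism or an anti-$*$-homomorphism. Setting $\theta := T$, the reduction identity $T_{\mathrm{orig}}(x) = \theta(x)u$ translates to $T_{*,\mathrm{orig}}(\omega) = \theta_*(u\omega)$; moreover $\theta$ intertwines the coproducts because its preadjoint is an algebra homomorphism, and thus $\theta$ is a quantum group isomorphism or commutant isomorphism, as required. The main obstacle in this plan lies in the middle step: the modular twist in Proposition~\ref{prop:four} is genuinely nontrivial, and reconciling it with the involution $T'^2 = \iota$ and the $\tau_t$-equivariance (bearing in mind the sign flip $t\mapsto -t$ between the two summands noted in Lemma~\ref{lem:three}) is the delicate calculation on which the whole argument hinges.
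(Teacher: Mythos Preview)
Your reduction to the unital case and your final step (once $T'=\iota$, apply Proposition~\ref{prop:five} to $T$) are fine and match the paper. The genuine gap is the middle step: you assert that $T'^2=\iota$ together with $T'\tau_t=\tau_tT'$ ``forces the twist on the mixed terms to collapse to the identity'', but you do not say how, and the formula in Proposition~\ref{prop:four} does not obviously simplify to the identity under these constraints. Indeed, the paper never proves $T'=\iota$ directly; that equality emerges only as a \emph{corollary} of the full argument.

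The paper's route is more indirect. From $T'^2=\iota$ one gets that $T'$ commutes with $R_1$, so the auxiliary map $(T')^{-1}R_1T'R_1$ attached to $T'$ is the identity, and Proposition~\ref{prop:five} applied to $T'$ (not to $T$) shows that $T'$ is itself either a $*$-homomorphism or an anti-$*$-homomorphism. In the first case Proposition~\ref{prop:five} then applies to $T$ and one is done. In the second case one runs the same analysis with $T^{-1}$, and if the corresponding $(T^{-1})'$ is also an anti-$*$-homomorphism, one compares the two relations $T'\tau_t=\tau_tT'$ (Proposition~\ref{prop:six}) and $T'\tau_t=\tau_{-t}T'$ (obtained by viewing $T'$ as a quantum group commutant isomorphism, which intertwines $\tau_t$ with $\tau'_t$) to conclude that the scaling group is trivial. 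At that point the antipode is bounded ($S=R$), and the paper finishes with the Kac-algebra style argument: build a unitary corepresentation out of $T_p$ and $T_{1-p}R_1$, deduce that $T_*$ is a $\sharp$-homomorphism, and conclude $R_2T=TR_1$, contradicting the assumption that $T'$ was a genuine anti-$*$-homomorphism. Your plan skips this entire case analysis, and without it the argument does not close.
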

\begin{proof}
Suppose that the result holds (with $u=1$) when $T=(T_*)^*$ is unital.
Then we apply this to $T_1$ to find that
\[ T_*(T(1)^*\omega) = T_{1,*}(\omega) = \theta_*(\omega)
\qquad (\omega\in L^1(\G_2)), \]
from which the general case follows.

So, we may suppose that $T$ is unital.  We wish to prove that
$T$ is either a $*$-homomorphism, or an anti-$*$-homomorphism.
Form $T' = T^{-1}R_2TR_1$.
By Corollary~\ref{corr:one}, $T'^2=\iota$, so $R_1 T' R_1 = R_1 T^{-1}R_2T
= T'^{-1} = T'$; thus $T'$ commutes with $R_1$.

Now, $T'(1)=1$ and $T'_*$ is an isometric algebra isomorphism.
By Proposition~\ref{prop:five}, as $T'^{-1} R_1 T' R_1 = \iota$, it
follows that $T'$ is either a $*$-homomorphism, or an anti-$*$-homomorphism.
If $T'$ is a $*$-homomorphism, then Proposition~\ref{prop:five} now
shows that $T$ itself is either a $*$-homomorphism or an anti-$*$-homomorphism,
as required.

If we reverse the roles of $\G_1$ and $\G_2$, and work with $T^{-1}$,
then the same arguments show that $(T^{-1})' = T R_1 T^{-1} R_2$ is
either a $*$-homomorphism or an anti-$*$-homomorphism.  If it is a
$*$-homomorphism, then $T^{-1}$ (and so $T$) is either a $*$-homomorphism
or an anti-$*$-homomorphism, as required.

So, the remaining case is when both $T'$ and $(T^{-1})'$ are
anti-$*$-homomorphisms (and, to avoid special cases, by this we mean that
$T'$ and $(T^{-1})'$ are not also $*$-homomorphisms).
Then we can consider the map $\Phi:L^\infty(\G_1)
\rightarrow L^\infty(\G_1') = L^\infty(\G_1)'; x\mapsto JT'(x)^*J$, which
will be a $*$-isomorphism which intertwines the coproducts.  Thus $\Phi$ will
also intertwine the antipode, the unitary antipode, and in particular
the scaling group.  The scaling group of $L^\infty(\G_1')$ is
$\tau'_t(x) = J\tau_{-t}(JxJ)J$, see \cite[Section~4]{kusvn}.  So, for
$x\in L^\infty(\G_1)$,
\[ JT'(\tau_t(x))^*J = \Phi(\tau_t(x)) = \tau'_t(\Phi(x)) = J\tau_{-t}(T'(x)^*)J. \]
Thus $T'\tau_t = \tau_{-t} T'$.  However, Proposition~\ref{prop:six}
shows that $T'\tau_t = \tau_t T'$; as $T'$ bijects, it follows that
$\tau_t=\iota$ for all $t$.

So the scaling group of $\G_1$ is trivial; arguing with $(T^{-1})'$
in place of $T'$ shows that the same is true of $\G_2$.  This does not
quite show that $\G_1$ and $\G_2$ are Kac algebras (see \cite[Page~7]{vv})
but it does give us enough that we can now easily follow the proof in
the Kac algebra case, see \cite[Section~5.5]{es}.  Indeed, let
\[ X = (T_p\otimes\iota)(W_1) + (T_{1-p}R_1\otimes\iota)(W_1^*)
\in L^\infty(\G_2) \vnten L^\infty(\hat\G_1), \]
where $W_1$ is the fundamental unitary for $\G_1$.  This makes sense,
as both $T_p$ and $T_{1-p}R_1$ are $*$-homomorphisms.

Then $X$ is unitary.  This follows, as for $x,y\in L^\infty(\G_1)$,
$T_p(x) T_{1-p}(y) = T(x) T(y) p(1-p)=0$.  Thus
\begin{align*} X^*X
&= \big( (T_p\otimes\iota)(W_1^*) + (T_{1-p}R_1\otimes\iota)(W_1) \big)
\big( (T_p\otimes\iota)(W_1) + (T_{1-p}R_1\otimes\iota)(W_1^*) \big) \\
&= (T_p\otimes\iota)(1) + (T_{1-p}R_1\otimes\iota)(1) = 1, \end{align*}
and similarly $XX^*=1$.

As the scaling group is trivial, the familiar formula for the antipode,
\cite[Proposition~8.3]{kv} or \cite[Page~79]{kusvn}, becomes
\[ R_1\big( (\iota\otimes\omega)(W_1) \big) = (\iota\otimes\omega)(W_1^*)
\qquad (\omega\in L^1(\hat\G_1)). \]
Thus, for $\omega\in L^1(\G_2)$ and $\omega'\in L^1(\hat\G_1)$,
as $R_1^2=\iota$,
\begin{align*} \ip{(\omega\otimes\iota)(X)}{\omega'}
&= \ip{T_p((\iota\otimes\omega')W_1)}{\omega}
   + \ip{T_{1-p}R_1((\iota\otimes\omega')(W_1^*))}{\omega} \\
&= \ip{T_p((\iota\otimes\omega')W_1)}{\omega}
   + \ip{T_{1-p}((\iota\otimes\omega')(W_1))}{\omega} \\
&= \ip{W_1}{T_*(\omega)\otimes\omega'}
= \ip{\lambda_1(T_*(\omega))}{\omega'}.
\end{align*}
So the map $L^1(\G_2) \rightarrow C_0(\hat\G_1); \omega\mapsto
\lambda_1(T_*(\omega))$ is a homomorphism, and now a simple calculation
shows that $(\Delta_2\otimes\iota)(X) = X_{13} X_{23}$.

Again, as $S_2 = R_2$, we can turn $L^1(\G_2)$ into a Banach $*$-algebra
for the involution
\[ \ip{x}{\omega^\sharp} = \overline{ \ip{R_2(x)^*}{\omega} }
\qquad (x\in L^\infty(\G_2), \omega\in L^1(\G_2)), \]
compare with Section~\ref{sec:cstar} below.  As $X$ is unitary,
\cite[Proposition~5.2]{kusun} shows that $\lambda_1 T_*$ is a
$*$-homomorphism.  Hence $T_*$ is a $*$-homomorphism.  So, for
$x\in L^\infty(\G_1)$ and $\omega\in L^1(\G_2)$,
\begin{align*} \ip{x}{T_*(\omega)^\sharp} &=
\overline{ \ip{R_1(x)^*}{T_*(\omega)} }
= \overline{ \ip{T(R_1(x))^*}{\omega} } \\
&= \ip{T(x)}{\omega^\sharp}
= \overline{ \ip{R_2(T(x))^*}{\omega} }, \end{align*}
showing that $R_2 T = T R_1$.  Thus $T' = \iota$, so $T'$ is a
$*$-homomorphisms, a contradiction, completing the proof.
\end{proof}

We remark that a corollary of the proof is that, actually, $T'=\iota$
all along!

\subsection{Completely isometric homomorphisms}\label{sec:cccase}

It is increasingly common in non-abelian harmonic analysis to study
objects in the category of operator spaces and completely bounded
maps; see for example the survey \cite{runde}.  It is well-known that the
transpose mapping is the canonical example of an isometric, but not
completely isometric, linear mapping.  So we might suspect that a
complete isometry cannot give rise to a anti-$*$-homomorphism, and this
is indeed the case-- this is well-known, but we include a sketch proof
for completeness.

\begin{theorem}
Let $A$ and $B$ be unital C$^*$-algebras, and let $T:A\rightarrow B$
be completely isometric bijection.  Then $T(1)$ is a unitary, and the
map $A\rightarrow B; a\mapsto T(a)T(1)^*$ is a $*$-homomorphism.
\end{theorem}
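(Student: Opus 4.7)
The plan is to apply Kadison's theorem (already invoked in the paper) to reduce to a unital setting, and then to show that a unital complete isometry between unital C$^*$-algebras is a $*$-homomorphism by means of the Kadison--Schwarz inequality applied to both the map and its inverse. By Kadison's theorem, $T(1)$ is a unitary in $B$, so setting $T_1(a) = T(a) T(1)^*$ gives a unital bijection $T_1 : A \to B$ which is still a complete isometry (right multiplication by a unitary is a complete isometry), and the same then holds for $T_1^{-1}$.

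The key observation I would invoke is the classical fact that a unital completely contractive linear map between unital C$^*$-algebras is automatically completely positive. (A short proof uses that $\|a\|\leq 1$ in $A$ is equivalent to positivity of $\bigl(\begin{smallmatrix} 1 & a \\ a^* & 1 \end{smallmatrix}\bigr)$ in $M_2(A)$, and amplifies this characterisation to all matrix levels.) Applied to both $T_1$ and $T_1^{-1}$, this shows that each is unital completely positive, and in particular self-adjoint and order-preserving.

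I would then finish with a standard multiplicative-domain argument. The Kadison--Schwarz inequality for $T_1$ gives $T_1(a)^* T_1(a) \leq T_1(a^*a)$ for every $a\in A$. Applied to $T_1^{-1}$ with the substitution $y = T_1(a)$, it yields $a^*a \leq T_1^{-1}(T_1(a)^* T_1(a))$; applying the positive map $T_1$ to both sides gives the reverse inequality $T_1(a^*a) \leq T_1(a)^* T_1(a)$. Hence equality holds for every $a$, which says that every element of $A$ lies in the multiplicative domain of $T_1$, and so by Choi's theorem $T_1$ is a $*$-homomorphism. I expect the main obstacle to be the verification of the ``unital complete contraction $\Rightarrow$ completely positive'' implication, but this is a well-known lemma in operator space theory; with that in hand, the remainder is a one-line application of the Schwarz inequality twice.
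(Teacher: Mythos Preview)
Your proof is correct and follows essentially the same route as the paper: reduce to the unital map $T_1$ via Kadison, observe that a unital complete contraction is completely positive, apply the Kadison--Schwarz inequality to both $T_1$ and $T_1^{-1}$ to force equality $T_1(a^*a)=T_1(a)^*T_1(a)$, and then use polarisation (equivalently, the multiplicative-domain argument you cite) to conclude that $T_1$ is multiplicative. The paper phrases the last step as ``polarisation'' where you invoke Choi's theorem, but the content is identical.
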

\begin{proof}
By Kadison, $T(1)$ is unitary, and $S:a\mapsto T(a)T(1)^*$ is a unital,
completely isometric bijection.  We can now follow \cite[Section~1.3]{bm}
to conclude that $S$ is a $*$-homomorphism, as required.  Indeed, $S$ is
unital and completely contractive, and so is completely positive.  Then
the Stinespring construction allows us to prove the Kadison-Schwarz
inequality: $S(a)^*S(a) \leq S(a^*a)$.  Applying this to $S^{-1}$
as well, and using polarisation, yields the result.
\end{proof}

\begin{theorem}
Let $T_*:L^1(\G_2) \rightarrow L^1(\G_1)$ be a completely isometric algebra
isomorphism.  Then $u = T(1)\in L^\infty(\G_2)$ is a member of the intrinsic
group, and there is a quantum group isomorphism
$\theta:L^\infty(\G_1) \rightarrow L^\infty(\G_2)$ such that
\[ T_*(\omega) = \theta_*(u\omega) \qquad (\omega\in L^\infty(\G_2)). \]
In particular, $\G_1$ is isomorphic to $\G_2$.
\end{theorem}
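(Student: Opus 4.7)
The plan is to combine Theorem \ref{thm:main} with the preceding completely isometric version of Kadison's theorem, in order to rule out the quantum group commutant isomorphism alternative in the present setting.

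First, I would apply Theorem \ref{thm:main} to the (in particular, isometric) algebra isomorphism $T_*$. This yields that $u = T(1)$ is a member of the intrinsic group of $L^\infty(\G_2)$, and produces a map $\theta:L^\infty(\G_1)\to L^\infty(\G_2)$, which is either a quantum group isomorphism or a quantum group commutant isomorphism, and which satisfies $T_*(\omega) = \theta_*(u\omega)$. Tracing the construction of $\theta$ through the proof of Theorem \ref{thm:main}, one sees that $\theta$ coincides with the unital Jordan $*$-isomorphism $T_1$ from Kadison's theorem; that is, $\theta(x) = T(x)u^*$ for all $x\in L^\infty(\G_1)$.

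Next, I would exploit the complete isometry hypothesis. Since $T_*$ is completely isometric, so is its adjoint $T:L^\infty(\G_1)\to L^\infty(\G_2)$, by the duality between the matrix norms on $M_n(L^\infty(\G))$ and $M_n(L^1(\G))$. Applying the preceding theorem to $T$, we conclude that $\theta = T(\cdot)T(1)^*$ is a genuine $*$-homomorphism, not merely a Jordan $*$-map; in particular, it is not an honest anti-$*$-homomorphism (with the irrelevant exception of the abelian case, where $\G_2' = \G_2$ anyway). This eliminates the commutant isomorphism alternative in Theorem \ref{thm:main}, so $\theta$ must be a quantum group isomorphism, and the conclusion $\G_1\cong\G_2$ follows immediately.

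There is essentially no new obstacle here: all the heavy lifting is already contained in Theorem \ref{thm:main} and the preceding completely isometric Kadison-type result, and the present theorem is really a short corollary of the two. The only minor technicality worth noting in passing is the lifting of complete isometry from the predual level to the von Neumann algebra level, which is a standard fact from operator space theory.
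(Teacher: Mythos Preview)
Your proposal is correct and matches the paper's approach: the paper's proof is the one-line observation that the preceding completely isometric Kadison-type theorem forces $\theta(x)=T(x)u^*$ to be a genuine $*$-homomorphism, whence the conclusion is immediate from the structure already established (equivalently, from Theorem~\ref{thm:main}). Your explicit identification $\theta=T_1$ and the remark about lifting complete isometry to the adjoint are exactly the details the paper leaves implicit.
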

\begin{proof}
The previous result shows that $\theta(x) = T(x)u^*$ defines a
$*$-homomorphism, and so the result is immediate.
\end{proof}

We remark that if $T_*:L^1(\G_2) \rightarrow L^1(\G_1)$ is isometric,
and completely contractive, then $T_*$ is induced by a quantum group
isomorphism as above.  Indeed, we only need to rule out the possibility
that $\theta:x\mapsto T(x)T(1)^*$ is an anti-$*$-isomorphism.  As $\theta$
is still a complete contraction, the Kadison-Schwarz inequality would yield
that $\theta(aa^*) = \theta(a)^* \theta(a) \leq \theta(a^*a)$, so applying
$\theta^{-1}$ (which is an order-isomorphism) gives $aa^* \leq a^*a$, a
contradiction (unless $L^\infty(\G_1)$ is commutative, in which case
$\theta$ is a homomorphism, as required).

\section{Isometries between measure algebras}\label{sec:cstar}

In this section, we extend our results to isometric algebra isomorphisms
between quantum measure algebras.  We thus start with a survey of
the C$^*$-algebraic theory of locally compact quantum groups.

A \emph{morphism} between two C$^*$-algebras $A$ and $B$ is a non-degenerate
$*$-homomorphism $\phi:A\rightarrow M(B)$ from $A$ to the multiplier
algebra of $B$.  That $\phi$ is non-degenerate is equivalent to $\phi$
extending to a unital, strictly continuous $*$-homomorphism $M(A)\rightarrow
M(B)$.  Thus morphisms can be composed; for further details see
\cite[Appendix~A]{mas}.

Given $\G$ and its fundamental unitary $W$, the space $\{ (\iota\otimes\omega)(W)
: \omega\in \mc B(H)_*\}$ is an algebra $\sigma$-weakly dense in $L^\infty(\G)$.
However, the norm closure turns out to be a C$^*$-algebra, which we shall denote
by $C_0(\G)$.  Then $\Delta$ restricts to give a morphism $C_0(\G)\rightarrow
M(C_0(\G)\otimes C_0(\G))$, and $R$, $\tau_t$ and so forth all restrict
to $C_0(\G)$.  It is possible to define locally compact quantum groups purely
at the C$^*$-algebra level, although the necessary weight theory is more
complicated; see \cite{kv}.

As for $L^1(\G)$, we use $\Delta$ to turn $C_0(\G)^* = M(\G)$ into a Banach
algebra.  In the commutative case, this is the measure algebra of a group,
which justifies the notation.  As $C_0(\G)$ is $\sigma$-weakly dense in
$L^\infty(\G)$, the embedding of $L^1(\G)$ into $M(\G)$ is an isometry;
clearly it is an algebra homomorphism, and actually $L^1(\G)$ becomes an ideal
in $M(\G)$, see \cite[Page~914]{kv}.

Actually, we work in a little generality, and introduce the following
(non-standard) terminology.

\begin{definition}
A \emph{quantum group above $C_0(\G)$} is a triple $(A,\Delta_A,\pi)$
where $A$ is a C$^*$-algebra, $\Delta_A:A\rightarrow M(A\otimes A)$ is
a morphism, coassociative in the sense that $(\iota\otimes\Delta_A)\Delta_A
= (\Delta_A\otimes\iota)\Delta_A$, and $\pi:A\rightarrow C_0(\G)$
is a surjective $*$-homomorphism with $\Delta \pi = (\pi\otimes\pi)\Delta_A$.
Then $\pi^*:M(\G) \rightarrow A^*$ is an algebra homomorphism, and
we make the further requirement that $\pi^*(L^1(\G))$ is an 
essential ideal in $A^*$.  Here \emph{essential} means that
if $\pi^*(\omega)\mu=0$ for all $\omega\in L^1(\G)$, then $\mu=0$,
and similarly with the orders reversed.
\end{definition}

For example, $C_0(\G)$ itself is a quantum group above $C_0(\G)$.
In the cocommutative case, $C_0(\G) = C^*_r(G)$ the reduced group
C$^*$-algebra of a locally compact group $G$, and so $M(\G) = B_r(G)$, the
reduced Fourier-Stieltjes algebra.  We could alternatively study the full
group C$^*$-algebra $C^*(G)$, whose dual is $B(G)$ the Fourier-Stieltjes
algebra.  Then $C^*(G)$ is a quantum group above $C^*_r(G)$.  It turns out
that this example can be generalised to the quantum setting.

We follow \cite{kusun}.  Let $\G$ be a locally compact quantum group,
and let $L^1_\sharp(\hat\G)$ be the collection of $\omega\in L^1(\hat\G)$
such that there is $w^\sharp\in L^1(\hat\G)$ with
\[ \ip{x}{\omega^\sharp} = \overline{ \ip{S(x)^*}{\omega} }
\qquad (x\in D(\hat S)). \]
Then $L^1_\sharp(\hat\G)$ is a $*$-algebra for the involution $\sharp$.
Let $C_u(\G)$ be the universal enveloping $C^*$-algebra of $L^1_\sharp(\hat\G)$,
and let $\hat\lambda_u: L^1_\sharp(\hat\G)\rightarrow C_u(\G)$ be the canonical
homomorphism.  Then $C_u(\G)$ becomes a ``quantum group'' which is very similar
to $C_0(\G)$, the essential difference being that the left and right
invariant weights are no longer faithful.  For us, the important features are:
\begin{itemize}
\item There is a non-degenerate $*$-homomorphism $\Delta_u:C_u(\G)
\rightarrow M(C_u(\G)\otimes C_u(\G))$ which is coassociative;
\item There is a surjective $*$-homomorphism $\pi:C_u(\G) \rightarrow C_0(\G)$
with $\Delta\pi=(\pi\otimes\pi)\Delta_u$.
\end{itemize}
We note here that there are many differences in notation between \cite{kusun}
and that for Kac algebras used in \cite{es}.  We shall continue to follow
\cite{kusun}.  It is shown in \cite[Section~8]{daws} that $L^1(\G)$ is an
essential ideal in $C_u(\G)^*$, and so $C_u(\G)$
is a quantum group above $C_0(\G)$.

In \cite{ks} examples of discrete groups $G$ are given so that there is a
compact quantum group $(A,\Delta_A)$ which ``sits between'' $C^*_r(G)$ and
$C^*(G)$, in the sense that we have proper quotient maps $C^*(G) \rightarrow A
\rightarrow C^*_r(G)$ which intertwine the coproducts.  Then the inclusion maps
$B_r(G) \rightarrow A^* \rightarrow B(G)$ are isometric algebra homomorphisms.
As the Fourier algebra $A(G)$ is an essential ideal in $B(G)$, it follows
that $A$ is a quantum group above $C^*_r(G)$.  Indeed, this argument would
work for any quantum group sitting between $C_0(\G)$ and $C_u(\G)$ (but to our
knowledge, \cite{ks} gives the first example of this phenomena).

\subsection{Quantum group isomorphisms revisited}\label{sec:qgir}

Let $\theta:L^\infty(\G_1) \rightarrow L^\infty(\G_2)$ be a
quantum group isomorphism.  Assuming we have normalised the Haar weights,
$\theta$ will induce an isomorphism between the Hilbert spaces which
intertwines the fundamental unitaries.  Thus $\theta$ will restrict to
give a $*$-isomorphism $C_0(\G_1) \rightarrow C_0(\G_2)$.

Similarly, $\theta$ induces a quantum group isomorphism
$\hat\theta:L^\infty(\hat\G_2) \rightarrow L^\infty(\hat\G_1)$
which satisfies $\hat\theta\lambda_2 = \lambda_1 \theta_*$.
Then $\hat\theta_*$ will restrict to give a $*$-isomorphism between
$L^1_\sharp(\G_1)$ and $L^1_\sharp(\G_2)$.  This will induce a
$*$-isomorphism $\theta_u:C_u(\G_1) \rightarrow C_u(\G_2)$ which
intertwines the coproducts, and satisfies $\pi_2 \theta_u = \theta \pi_1$.

For a quantum group commutant isomorphism $\theta$ we simply compose
$\theta$ with the map $x\mapsto Jx^*J$ to get a quantum group isomorphism
from $\G_1$ to $\G_2'$.  In \cite[Section~4]{kusvn} it is shown that
$(\G_2')\hat{} = \hat\G_2^\op$, where $L^\infty(\hat\G_2^\op)
= L^\infty(\hat\G_2)$ with the opposite coproduct
$\hat\Delta^\op = \sigma \hat\Delta$.  Hence $L^1_\sharp(\hat\G_2^\op)$
agrees with $L^1_\sharp(\hat\G_2)$, but with the reversed product, and
similarly $C_u(\G_2')$ is canonically equal to the opposite C$^*$-algebra to
$C_u(\G_2)$, but has the same coproduct.  Thus, for example, $\theta$
lifts to an anti-$*$-isomorphism $\theta_u:C_u(\G_1) \rightarrow C_u(\G_2)$
which intertwines the coproduct (somewhat as we might hope).

\subsection{Normal extensions}\label{sec:norext}

Let $B$ be a C$^*$-algebra non-degenerately represented on a Hilbert space
$H$.  Let $M = B''$ be the von Neumann algebra generated by $B$.
We can identify the multiplier algebra of $B$ with
\[ M(B) = \{ x\in M : xa,ax \in B \ (a\in B) \}. \]
Let $A$ be a C$^*$-algebra, and consider the enveloping $C^*$-algebra
$A^{**}$.  Let $\phi:A\rightarrow M(B)$ be a morphism.  By the universal
property of $A^{**}$, there is a unique normal $*$-homomorphism $\tilde\phi:
A^{**} \rightarrow B''$ extending $\phi$.
As $\phi$ is non-degenerate, $\tilde\phi$ is unital.
In the special case when $B''=B^{**}$ (say with
$B\subseteq \mc B(H)$ the universal representation) the extension
$\tilde\phi$ is nothing but the second adjoint $\phi^{**}$.

Now let $(A,\Delta_A,\pi)$ be a quantum group above $C_0(\G)$, and let
$A\subseteq\mc B(H)$ be the universal representation, so that both $A\otimes A$
(the spacial C$^*$-tensor product) and $A^{**}\vnten A^{**}$ are
subalgebras of $\mc B(H\otimes H)$.  We can hence form the extension
$\tilde\Delta_A:A^{**}\rightarrow A^{**}\vnten A^{**}$.  Notice that
then $(A^{**},\tilde\Delta_A)$ becomes a Hopf-von Neumann algebra.

Similarly, we form $\tilde\pi:A^{**} \rightarrow L^\infty(\G)$.
The preadjoint of this map is simply the embedding $\tilde\pi_*:L^1(\G)
\rightarrow A^*$, which is the composition of the isometry $L^1(\G)
\rightarrow C_0(\G)^*$ with the isometry $\pi^*:C_0(\G)^*\rightarrow A^*$.
Let $\supp\tilde\pi$ be the support projection of $\tilde\pi$, so
$\supp\tilde\pi\in A^{**}$ is the unique central projection with, for
$x\in A^{**}$, $x\supp\tilde\pi=0$ if and only if $\tilde\pi(x)=0$.
Then
\[ \tilde\pi_*(L^1(\G))^\perp = \{ x\in A^{**} : \ip{x}{\tilde\pi_*(\omega)}
=0 \ (\omega\in L^1(\G)) \} = \ker\tilde\pi
= (1-\supp\tilde\pi)A^{**}. \]
It follows that
\begin{align*} \tilde\pi_*(L^1(\G)) &=
\{ \mu\in A^* : \ip{x}{\mu}=0 \ (x\in(1-\supp\tilde\pi)A^{**}) \} \\
&= \{ \mu\in A^* : (1-\supp\tilde\pi)\mu=0 \}
= (\supp\tilde\pi) A^*.
\end{align*}

Temporarily, let $\Delta_0$ be the coproduct on $C_0(\G)$, and let
$\Delta_\infty$ be the coproduct on $L^\infty(\G)$.  Identifying $M(C_0(\G)
\otimes C_0(\G))$ with a subalgebra of $L^\infty(\G)\vnten L^\infty(\G)$,
we see that $\Delta_\infty$ extends $\Delta_0$.  It is easy to verify
that as $(\pi\otimes\pi)\Delta_A = \Delta_0 \pi$, also
$(\tilde\pi\otimes\tilde\pi)\tilde\Delta_A = \Delta_\infty \tilde\pi$.
We shall use this, and similar relations, without comment in the next section.

We remark that we could work with a more general notion
of a quantum group above $C_0(\G)$.  Indeed, suppose that $(A,\Delta_A)$ is
a C$^*$-bialgebra, and that $\pi:A\rightarrow L^\infty(\G)$ is a
$*$-homomorphism with $\sigma$-weak dense range.  Then $\pi\otimes \pi$
is a $*$-homomorphism $A\otimes A\rightarrow L^\infty(\G)\otimes L^\infty(\G)
\subseteq L^\infty(\G)\vnten L^\infty(\G)$, and so, by taking a normal
extension, we have a $*$-homomorphism $\pi:M(A\otimes A)\rightarrow 
L^\infty(\G)\vnten L^\infty(\G)$.  We can thus make sense of the
requirement that $(\pi\otimes\pi)\Delta_A = \Delta \pi$.  Then $\pi^*$
restricted to $L^1(\G)$ gives a homomorphism $L^1(\G)\rightarrow A^*$ (which
is an isometry, as $\pi$ has $\sigma$-weakly dense range, and using Kaplansky
Density).  We again insist that $\pi^*(L^1(\G))$ is an essential ideal in $A^*$.
A careful examination of the following proofs show that they would all work in
this more general setting; but in the absence of any examples, we do not make
this a formal definition.

\subsection{Isometries of duals of quantum groups}

For $i=1,2$ let $(A_i,\Delta_{A_i},\pi_i)$ be a quantum group above $C_0(\G_i)$.
Let $T_*:A_2^* \rightarrow A_1^*$ be an isometric algebra isomorphism, and
set $T=(T_*)^*:A_1^{**} \rightarrow A_2^{**}$.  The following is now proved
in an entirely analogous way to the arguments in Section~\ref{sec:ell1case}.

\begin{lemma}\label{lem:four}
$T(1)$ is a unitary element of $A_2^{**}$ which is a member of the intrinsic
group.  The map $T_{1*}:A_1^{**}\rightarrow A_2^{**}; \omega\mapsto
T_*(T(1)^*\omega)$ is an isometric algebra isomorphism.
\end{lemma}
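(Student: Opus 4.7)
The plan is to mirror the opening arguments of Section~\ref{sec:ell1case}. Viewing each $A_i$ in its universal representation, $A_i^{**}$ becomes a von Neumann algebra whose predual is canonically identified with $A_i^*$, and the coproduct $\tilde{\Delta}_{A_i}$ on $A_i^{**}$ extends $\Delta_{A_i}$ (so the Banach algebra product on $A_i^*$ coming from $\Delta_{A_i}$ coincides with the predual product induced by $\tilde{\Delta}_{A_i}$). Hence $T = (T_*)^*$ is a bijective linear isometry between von Neumann algebras, and Kadison's theorem \cite{kad} immediately yields that $u := T(1)$ is a unitary in $A_2^{**}$ and that $x\mapsto T(x)u^*$ is a Jordan $*$-homomorphism. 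Note that unitarity of $u$ is free here, unlike in the abstract situation of Theorem~\ref{prop:one}.

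To show $u$ lies in the intrinsic group of the Hopf-von Neumann algebra $(A_2^{**}, \tilde{\Delta}_{A_2})$, I would invoke the Hopf-von Neumann part of Theorem~\ref{prop:one}: it suffices to verify that $u$ is a character of $A_2^*$. Since $T_*$ is an algebra homomorphism and $\tilde{\Delta}_{A_1}(1) = 1\otimes 1$, for $\omega, \omega' \in A_2^*$ one has
\[ \ip{u}{\omega\omega'} = \ip{1}{T_*(\omega)T_*(\omega')} = \ip{1\otimes 1}{T_*(\omega)\otimes T_*(\omega')} = \ip{u}{\omega}\ip{u}{\omega'}, \]
and $u\neq 0$ since $T$ is a bijection. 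Theorem~\ref{prop:one} then gives $\tilde{\Delta}_{A_2}(u) = u\otimes u$, placing $u$ in the intrinsic group.

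For the final assertion, since $\tilde{\Delta}_{A_2}(u^*) = u^*\otimes u^*$, the element $u^*$ is also a unitary in the intrinsic group. The concluding paragraph of the proof of Theorem~\ref{prop:one}, which requires only the Hopf-von Neumann structure together with unitarity and $\tilde{\Delta}_{A_2}(u^*) = u^*\otimes u^*$, shows that $\omega\mapsto u^*\omega$ is an isometric algebra automorphism of $A_2^*$. Therefore $T_{1*}\colon \omega\mapsto T_*(u^*\omega)$ is the composition of two isometric algebra isomorphisms, and so is itself an isometric algebra isomorphism.

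There is no real technical obstacle; the only point requiring care is that $A_2^{**}$ need not carry a locally compact quantum group structure, so one must use only those parts of Theorem~\ref{prop:one} that hold at the Hopf-von Neumann level (the characterisation of the intrinsic group via characters of the predual, and the fact that unitaries in the intrinsic group act isometrically and multiplicatively on the predual). Both are available, and unitarity of $T(1)$ -- which in Theorem~\ref{prop:one} was deduced from the quantum group structure -- is supplied instead by Kadison's theorem.
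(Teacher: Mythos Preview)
Your proposal is correct and follows precisely the route the paper indicates: the paper does not give an explicit proof of Lemma~\ref{lem:four} but simply states that it ``is now proved in an entirely analogous way to the arguments in Section~\ref{sec:ell1case}'', and your write-up is exactly that analogy spelled out. Your care in isolating which parts of Theorem~\ref{prop:one} hold at the Hopf--von Neumann level (character characterisation and the action of intrinsic-group unitaries on the predual) versus which require the quantum group structure (automatic unitarity, here supplied instead by Kadison) is well-placed and matches the paper's intent.
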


Again, we find that $T_1 = (T_{1*})^*$ is a Jordan homomorphism.
We now show (in a similar, but more general, fashion to the arguments in
\cite[Section~5.6]{es}) a link between $\tilde\pi$ and the order properties
of $A^{**}$, for $(A,\Delta_A,\pi)$ a quantum group above $C_0(\G)$.

\begin{proposition}\label{prop:seven}
Let $\G$ be a locally compact quantum group, and let
$(A,\Delta_A)$ a quantum group above $C_0(\G)$.  Let
\[ \mc Q = \{ Q\in A^{**} : Q\text{ is a projection, } Q\not=1,
\tilde\Delta_A(Q)\leq Q\otimes Q \}. \]
Then $\mc Q$ has a maximal element, which is $1-\supp\tilde\pi$.
\end{proposition}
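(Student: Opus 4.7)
The plan is to show $1-P\in\mc Q$ and that no element of $\mc Q$ strictly dominates $1-P$, where $P:=\supp\tilde\pi$.  Note $P$ is central in $A^{**}$ with $\tilde\pi_*(L^1(\G))=PA^*$, and $P\neq 0$ since $\pi$ surjects onto the nonzero $C_0(\G)$ (so also $1-P\neq 1$).

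The first step exploits that $PA^*$ is a two-sided ideal in $A^*$.  For $\nu\in PA^*$ and $\mu\in A^*$, the product $\nu\mu$ lies in $PA^*$, so $(\nu\mu)(1-P)=(\nu\otimes\mu)\tilde\Delta_A(1-P)=0$; letting $\mu$ range over $A^*$ yields $(\nu\otimes\iota)\tilde\Delta_A(1-P)=0$ for every $\nu\in PA^*$.  Rewriting using $\nu=P\nu$, this reads $(\nu\otimes\iota)[(P\otimes 1)\tilde\Delta_A(1-P)]=0$.  Because $PA^*$ is isometrically the predual of $PA^{**}$, the slice maps $(\nu\otimes\iota)$ for $\nu\in PA^*$ separate points of $PA^{**}\vnten A^{**}$, forcing $(P\otimes 1)\tilde\Delta_A(1-P)=0$; that is, $\tilde\Delta_A(1-P)\leq(1-P)\otimes 1$.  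The left-ideal property gives symmetrically $\tilde\Delta_A(1-P)\leq 1\otimes(1-P)$, and multiplying these two commuting projection inequalities gives $\tilde\Delta_A(1-P)\leq(1-P)\otimes(1-P)$, so $1-P\in\mc Q$.

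For maximality, suppose $Q\in\mc Q$ with $Q\geq 1-P$, and set $R=1-Q$, so $R\leq P$ and $R\neq 0$ (as $Q\neq 1$).  Applying $\tilde\pi\otimes\tilde\pi$ to $\tilde\Delta_A(Q)\leq Q\otimes Q$ and using the intertwining $(\tilde\pi\otimes\tilde\pi)\tilde\Delta_A=\Delta_\infty\tilde\pi$ gives $\Delta_\infty(\tilde\pi(Q))\leq\tilde\pi(Q)\otimes\tilde\pi(Q)\leq\tilde\pi(Q)\otimes 1$.  By \cite[Lemma~6.4]{kv}, applied to the projection $\tilde\pi(Q)\in L^\infty(\G)$, we obtain $\tilde\pi(Q)\in\{0,1\}$ and hence $\tilde\pi(R)\in\{0,1\}$.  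Since $\tilde\pi$ restricts to a $*$-isomorphism $PA^{**}\to L^\infty(\G)$ and $R\in PA^{**}$ (because $R\leq P$), this in turn forces $R\in\{0,P\}$; with $R\neq 0$ we conclude $R=P$ and $Q=1-P$.  Thus no element of $\mc Q$ strictly exceeds $1-P$.

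The main obstacle is the dualisation in the first step: translating the algebraic ideal condition into the operator inequality $\tilde\Delta_A(1-P)\leq (1-P)\otimes 1$ relies on realising $PA^*$ as the predual $(PA^{**})_*$ so that the slice maps $(\nu\otimes\iota)$ with $\nu\in PA^*$ separate points of $PA^{**}\vnten A^{**}$.  The maximality step is then a clean application of \cite[Lemma~6.4]{kv}, exploiting that $\tilde\pi$ identifies $PA^{**}$ with $L^\infty(\G)$.
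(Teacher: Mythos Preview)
Your first step is correct and slightly slicker than the paper's: you obtain the stronger inequalities $\tilde\Delta_A(1-P)\leq(1-P)\otimes 1$ and $\tilde\Delta_A(1-P)\leq 1\otimes(1-P)$ separately via slice maps, whereas the paper directly verifies $\tilde\Delta_A(e)(e\otimes e)=\tilde\Delta_A(e)$ by pairing against $\mu\otimes\mu'$ and decomposing each factor as $(1-e)\mu+e\mu$.

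Your maximality step, however, is weaker than the paper's.  By restricting at the outset to $Q\geq 1-P$, you prove only that $1-P$ is \emph{a} maximal element of $\mc Q$, i.e.\ that nothing in $\mc Q$ lies strictly above it.  The paper establishes the stronger fact that $1-P$ is the \emph{greatest} element: every $Q\in\mc Q$ satisfies $Q\leq 1-P$.  The argument there applies \cite[Lemma~6.4]{kv} to an \emph{arbitrary} $Q\in\mc Q$, obtaining $\tilde\pi(Q)\in\{0,1\}$.  The case $\tilde\pi(Q)=0$ gives $Q\leq 1-P$ immediately.  The case $\tilde\pi(Q)=1$ is ruled out by a separate contradiction: it forces $Q\geq P$, hence $Q+(1-P)\geq 1$, so $Q\otimes Q+(1-P)\otimes(1-P)\geq 1$, whence $(1-Q)\otimes P\leq 0$, contradicting $Q\neq 1$ and $P\neq 0$.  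Your clean observation that $\tilde\pi$ restricts to an isomorphism $PA^{**}\to L^\infty(\G)$ does not by itself dispose of this case when $Q$ is not assumed comparable to $1-P$.  Since the application in Proposition~\ref{prop:eight} requires that $1-\supp\tilde\pi$ be uniquely determined as \emph{the} greatest element of $\mc Q$ (so that the order isomorphism $T_1$ must carry it to its counterpart), the stronger conclusion is what is actually needed, and your hypothesis $Q\geq 1-P$ should be dropped.
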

\begin{proof}
Let $e=1-\supp\tilde\pi$, so as in Section~\ref{sec:norext} above,
$e A^{**} = \ker\tilde\pi$ and $\tilde\pi_*(L^1(\G)) = (1-e)A^*$.
Let $\mu,\mu'\in A^*$, so there are $\omega,\omega'\in L^1(\G)$ with
$\tilde\pi_*(\omega) = (1-e)\mu$ and $\tilde\pi^*(\omega')=(1-e)\mu'$.
Then
\begin{align*} \ip{\tilde\Delta_A(e)(e\otimes e)}{\mu\otimes\mu'}
&= \ip{e}{(e\mu)(e\mu')}
= \ip{e}{(\mu-\tilde\pi_*(\omega))(\mu'-\tilde\pi_*(\omega'))} \\
&= \ip{e}{\mu\mu'} + \ip{e}{\tilde\pi_*(\omega\omega') - \mu\tilde\pi_*(\omega')
- \tilde\pi_*(\omega)\mu'} \\
&= \ip{e}{\mu\mu'}
= \ip{\tilde\Delta_A(e)}{\mu\otimes\mu'},
\end{align*}
as $\tilde\pi_*(L^1(\G))$ is an ideal in $A^*$.
It follows that $\tilde\Delta_A(e)(e\otimes e) = \tilde\Delta_A(e)$,
and so $\tilde\Delta_A(e) \leq e\otimes e$.  Thus $e\in\mc Q$.

Now let $Q\in\mc Q$, so that $\Delta \tilde\pi(Q) = (\Delta\pi)\tilde{\ }(Q)
= ((\pi\otimes\pi)\Delta_u)\tilde{\ }(Q)
= (\tilde\pi\otimes\tilde\pi)\tilde\Delta_u(Q)
\leq \tilde\pi(Q) \otimes \tilde\pi(Q) \leq 1 \otimes \tilde\pi(Q)$.
By \cite[Lemma~6.4]{kv}, this can only occur when $\tilde\pi(Q)=0$ or $1$.

If $\tilde\pi(Q)=1$, then $Q \geq \supp\tilde\pi$, and so $Q+e\geq 1$.
Thus $\tilde\Delta_u(Q) + \tilde\Delta_u(e) \geq 1\otimes 1$, but as
$Q,e\in \mc Q$, it follows that
\[ 1 \otimes 1 \leq Q\otimes Q + e\otimes e. \]
Thus also
\[ (1-Q)\otimes(1-e) \leq \big((1-Q)\otimes(1-e)\big)
\big( Q\otimes Q + e\otimes e\big) \big((1-Q)\otimes(1-e)\big) = 0, \]
and so $Q=1$ or $e=1$, a contradiction.
Thus $\tilde\pi(Q)=0$, showing that $Q\leq e$ as required.
\end{proof}

\begin{proposition}\label{prop:eight}
With $T_*,T,T_1$ as above, we have that:
\begin{enumerate}
\item\label{prop:eightone} $T_1(1-\supp\tilde\pi_1) = 1-\supp\tilde\pi_2$.
\item\label{prop:eighttwo} $T(\ker\tilde\pi_1) = \ker\tilde\pi_2$.
\item\label{prop:eightthree} $T_*(\tilde\pi_{2,*}(L^1(\G_2)))
   = \tilde\pi_{1,*}(L^1(\G_1))$.
\end{enumerate}
\end{proposition}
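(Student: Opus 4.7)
My strategy is to prove statement~(1) as the core assertion, then deduce (2) and (3) essentially by duality. For~(1), I would invoke Proposition~\ref{prop:seven}, which characterises $e_i := 1 - \supp\tilde\pi_i$ as the maximum element of the order-theoretic set $\mc Q_i$. The plan is to show that $T_1$ induces a bijection $\mc Q_1 \to \mc Q_2$; together with the symmetric statement for $T_1^{-1}$ and maximality of $e_i$, this forces $T_1(e_1) = e_2$.

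The central computation is to verify $\tilde\Delta_{A_2}(T_1(Q)) \leq T_1(Q) \otimes T_1(Q)$ for every $Q \in \mc Q_1$. Since $T_{1,*}$ is an algebra homomorphism, computing $\ip{x}{T_{1,*}(\omega\omega')}$ in two ways yields the preadjoint identity
\[ \ip{\tilde\Delta_{A_2}(T_1(x))}{\omega \otimes \omega'} = \ip{\tilde\Delta_{A_1}(x)}{T_{1,*}(\omega) \otimes T_{1,*}(\omega')} \qquad (x \in A_1^{**},\ \omega, \omega' \in A_2^*). \]
As $T_1$ is a Jordan $*$-isomorphism it is positive, so $T_{1,*}$ preserves positive cones; thus for $\omega,\omega' \geq 0$ we obtain the tensor-functional inequality
\[ \ip{\tilde\Delta_{A_2}(T_1(Q))}{\omega \otimes \omega'} \leq \ip{T_1(Q) \otimes T_1(Q)}{\omega \otimes \omega'}. \]
The main obstacle is promoting this to an operator inequality in $A_2^{**} \vnten A_2^{**}$, since positivity on elementary positive tensor functionals is famously weaker than operator positivity (as the flip $\Sigma$ on $M_2 \otimes M_2$ exemplifies). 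I would resolve this by appealing to the Kadison--St\o rmer decomposition: there is a central projection $p \in A_2^{**}$ such that $T_p(x) = T_1(x)p$ is a genuine $*$-homomorphism and $T_{1-p}(x) = T_1(x)(1-p)$ is a genuine anti-$*$-homomorphism into the respective corners. Each of $T_p \otimes T_p$ and $T_{1-p} \otimes T_{1-p}$ is positive as a tensor of a $*$- or anti-$*$-homomorphism, and applying each to $Q \otimes Q - \tilde\Delta_{A_1}(Q) \geq 0$ handles the diagonal corners $p \otimes p$ and $(1-p)\otimes(1-p)$. The off-diagonal corners $p \otimes (1-p)$ and $(1-p)\otimes p$ reduce to manifestly positive contributions by a direct computation in the spirit of Proposition~\ref{prop:two}, using orthogonality $T_p(\cdot) T_{1-p}(\cdot) = 0$. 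Reassembling the four pieces gives the required operator inequality.

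Statement~(2) then drops out: $e_1$ is central, and since the Kadison--St\o rmer decomposition of the Jordan $*$-isomorphism $T_1$ respects central summands, $T_1$ carries the two-sided ideal $e_1 A_1^{**} = \ker\tilde\pi_1$ onto $T_1(e_1) A_2^{**} = e_2 A_2^{**} = \ker\tilde\pi_2$. Writing $T(x) = T_1(x) u$ for the unitary $u = T(1)$, and using that $\ker\tilde\pi_2$ is a two-sided ideal stable under right multiplication by $u$, we conclude $T(\ker\tilde\pi_1) = \ker\tilde\pi_2$. Statement~(3) is the dual of~(2): since $\tilde\pi_{i,*}(L^1(\G_i)) = (\ker\tilde\pi_i)^\perp$ under the duality $A_i^{**} = (A_i^*)^*$, taking annihilators under $T = T_*^*$ yields $T_*(\tilde\pi_{2,*}(L^1(\G_2))) = \tilde\pi_{1,*}(L^1(\G_1))$.
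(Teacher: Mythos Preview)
Your overall strategy---reduce (1) to showing that $T_1$ bijects $\mc Q_1$ onto $\mc Q_2$ via Proposition~\ref{prop:seven}, then deduce (2) from (1) using that $e_1$ is central and $T(x)=T_1(x)T(1)$, and (3) from (2) by taking pre-annihilators---is exactly the paper's approach, and your arguments for (2) and (3) are essentially the same as the paper's.

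The divergence is at the key inequality $\tilde\Delta_{A_2}(T_1(Q))\le T_1(Q)\otimes T_1(Q)$. The paper simply records the scalar inequality for positive $\mu,\mu'$ and writes ``It follows that\dots''. You are right that positivity on elementary positive tensor functionals does not, in general, promote to operator positivity; your flip example is precisely the standard warning. However, the paper's inference \emph{is} valid here, because the dominating operator is an \emph{elementary tensor of projections}. Concretely: writing $E=\tilde\Delta_{A_2}(T_1(Q))$ and $P=T_1(Q)$, for any positive $\mu_0,\nu$ one has
\[
\big\langle((1-P)\otimes 1)E((1-P)\otimes 1),\mu_0\otimes\nu\big\rangle
=\big\langle E,(1-P)\mu_0(1-P)\otimes\nu\big\rangle
\le \langle P,(1-P)\mu_0(1-P)\rangle\,\langle P,\nu\rangle=0,
\]
so the positive element $((1-P)\otimes 1)E((1-P)\otimes 1)$ vanishes on a total set of functionals and is therefore $0$; hence $E\le P\otimes 1$, and symmetrically $E\le 1\otimes P$, giving $E\le P\otimes P$. (Your flip counterexample does not fit this pattern: neither projection there is an elementary tensor.)

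By contrast, your Kadison--St\o rmer fix has a genuine gap in the off-diagonal corners. The diagonal pieces work, since $T_p\otimes T_p$ and $T_{1-p}\otimes T_{1-p}$ are positive. But $T_p\otimes T_{1-p}$ is the tensor of a $*$-homomorphism with an anti-$*$-homomorphism, and such maps are \emph{not} positive in general---indeed $\iota\otimes(\text{transpose})$ on $M_2\otimes M_2$ sends the positive operator $\sum_{i,j}e_{ij}\otimes e_{ij}$ to your flip $\Sigma$. So one cannot apply $T_p\otimes T_{1-p}$ to $Q\otimes Q-\tilde\Delta_{A_1}(Q)\ge 0$ and conclude anything about the $p\otimes(1-p)$ corner, and the orthogonality $T_p(\cdot)T_{1-p}(\cdot)=0$ does not rescue this. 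The elementary-tensor argument above (or, equivalently, the fact that a Jordan $*$-isomorphism carries the Peirce corner $QA_1^{**}Q$ onto $T_1(Q)A_2^{**}T_1(Q)$) is what is actually needed.
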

\begin{proof}
For $i=1,2$, form $\mc Q_i$ for $A_i$ as in Proposition~\ref{prop:seven}.
We claim that $T_1$ gives a bijection $\mc Q_1$ to $\mc Q_2$.  Let
$Q\in \mc Q_1$, so as $T_1$ is Jordan homomorphism, $T_1(Q)$ is a projection
which is not equal to $1$ (as $T_1(1)=1$ and $T_1$ bijects).
For $\mu,\mu'\in A_2^*$ positive, we see that
\begin{align*} \ip{\tilde\Delta_{A_2}(T_1(Q))}{\mu\otimes\mu'}
&= \ip{\Delta_{A_1}(Q)}{T_{1*}(\mu)\otimes T_{1*}(\mu')} \\
&\leq \ip{Q\otimes Q}{T_{1*}(\mu)\otimes T_{1*}(\mu')}
= \ip{T_1(Q)\otimes T_1(Q)}{\mu\otimes\mu'}. \end{align*}
It follows that $\tilde\Delta_{A_2}(T_1(Q)) \leq T_1(Q)\otimes T_1(Q)$, and so
$T_1(\mc Q_1) \subseteq \mc Q_2$.  Applying the same argument to $T_1^{-1}$
yields that $T_1(\mc Q_1) \supseteq \mc Q_2$, giving the claim.
As $T_1$ preserves the order, and $1-\supp\tilde\pi_i$ is the maximal
element of $\mc Q_i$, it follows that $T_1(1-\supp\tilde\pi_1)
= 1-\supp\tilde\pi_2$ showing (\ref{prop:eightone}).

For $i=1,2$, we know that $x\in\ker\tilde\pi_i$ if and only if
$x \supp\tilde\pi_i = 0$.  For $x\in A_1^{**}$, as $T_1$ is a Jordan
homomorphism, we see that
\begin{align*} 2 T_1(x) \supp\tilde\pi_2
&= (\supp\tilde\pi_2)T_1(x) + T_1(x)\supp\tilde\pi_2
= T_1(\supp\tilde\pi_1) T_1(x) + T_1(x) T_1(\supp\tilde\pi_1) \\
&= T_1((\supp\tilde\pi_2) x+x \supp\tilde\pi_2) = 2T_1(x \supp\tilde\pi_2),
\end{align*}
using (\ref{prop:eightone}).  Thus $T_1(\ker\tilde\pi_1)
= \ker\tilde\pi_2$.  As $\ker\tilde\pi_1$ is an ideal, and
$T(1)$ is unitary, it follows that $\ker\tilde\pi_1 T(1) = \ker\tilde\pi_1$,
and so $T(\ker\tilde\pi_1) = T_1(\ker\tilde\pi_1 T(1)) = \ker\tilde\pi_2$
showing (\ref{prop:eighttwo}).

As in Section~\ref{sec:norext} above, we have that
$\tilde\pi_{i,*}(L^1(\G_i)) = {}^\perp(\ker\tilde\pi_i)$, for $i=1,2$.
Hence (\ref{prop:eightthree}) follows immediately from (\ref{prop:eighttwo}).
\end{proof}

For $i=1,2$ we have that $L^1(\G_i) \subseteq A_i^*$ isometrically,
and so the restriction of $T$ yields an isometric algebra homomorphism
$T_r:L^1(\G_2) \rightarrow L^1(\G_1)$.  We have already characterised such
maps, and we next bootstrap this to determine the structure of $T_*$ on all
of $A_2^*$.  For the moment, we restrict attention to the cases when
$A_i=C_0(\G_i)$ for $i=1,2$, or $A_i=C_u(\G_i)$, for $i=1,2$.  In the next
section we use quantum group duality to say something about the general case.

Given a quantum group (commutant) isomorphism $\theta:L^\infty(\G_1)
\rightarrow L^\infty(\G_2)$, we recall from Section~\ref{sec:qgir}
that $\theta$ restricts to a (anti-) $*$-isomorphism $\theta:C_0(\G_1)
\rightarrow C_0(\G_2)$, and lifts to a (anti-) $*$-isomorphism
$\theta_u:C_u(\G_1) \rightarrow C_u(\G_2)$.  In the following, we call
such a map ``associated''.

\begin{theorem}\label{thm:two}
Let $\G_1$ and $\G_2$ be locally compact quantum groups.
Suppose that either $A_1=C_0(\G_1), A_2=C_0(\G_2)$, or $A_1=C_u(\G_1)$,
$A_2 = C_u(\G_2)$.
Let $T_*:A_2^* \rightarrow A_1^*$ be a bijective isometric algebra
homomorphism, and set $T=(T_*)^*$.  Then $v=T(1)$ and $u=T_r^*(1)$ are
in the intrinsic groups of $A_2^{**}$ and $L^\infty(\G_2)$, respectively.
There is either:
\begin{enumerate}
\item A quantum group isomorphism $\theta:L^\infty(\G_1) \rightarrow
L^\infty(\G_2)$ and associated $*$-isomorphism
$\theta_0:A_1\rightarrow A_2$ which intertwines the coproducts; or
\item A quantum group commutant isomorphism $\theta:L^\infty(\G_1) \rightarrow
L^\infty(\G_2)$ and associated anti-$*$-isomorphism
$\theta_0:A_1\rightarrow A_2$ which intertwines the coproducts.
\end{enumerate}
In either case, for $\omega\in M_u(\G_2)$ and $\omega'\in L^1(\G_2)$,
\[ T_*(\omega) = \theta_0^*(v \omega), \qquad
T_r(\omega') = \theta_*(u\omega'). \]
\end{theorem}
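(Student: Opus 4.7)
The plan is to deduce the theorem from the $L^1$-level main result, Theorem~\ref{thm:main}, by passing between the ideal $\tilde\pi_{i,*}(L^1(\G_i)) \subseteq A_i^*$ and the full dual, using the essentiality of this ideal to propagate the identity from $L^1$ up to $A^*$.

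First, Lemma~\ref{lem:four} gives that $v = T(1)$ lies in the intrinsic group of $A_2^{**}$ and that $T_{1*} : \omega \mapsto T_*(v^*\omega)$ is a unital isometric algebra isomorphism. Proposition~\ref{prop:eight}(\ref{prop:eightthree}) shows that $T_*$ maps $\tilde\pi_{2,*}(L^1(\G_2))$ bijectively onto $\tilde\pi_{1,*}(L^1(\G_1))$, so on identifying these ideals with $L^1(\G_i)$ the restriction $T_r : L^1(\G_2) \to L^1(\G_1)$ is a well-defined isometric algebra isomorphism, with dual relation $T_r^* \tilde\pi_1 = \tilde\pi_2 T$. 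In particular $u := T_r^*(1) = \tilde\pi_2(v)$. Applying Theorem~\ref{thm:main} to $T_r$, we obtain $u$ in the intrinsic group of $L^\infty(\G_2)$ and either a quantum group isomorphism or a commutant isomorphism $\theta : L^\infty(\G_1) \to L^\infty(\G_2)$ with $T_r(\omega') = \theta_*(u\omega')$, equivalently $T_r^*(x) = \theta(x) u$.

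Second, Section~\ref{sec:qgir} furnishes the associated (anti-)$*$-isomorphism $\theta_0 : A_1 \to A_2$ intertwining coproducts and satisfying $\pi_2 \theta_0 = \theta \pi_1$; this is the one place where the hypothesis $A_i \in \{C_0(\G_i), C_u(\G_i)\}$ is really used. Its normal extension $\theta_0^{**} : A_1^{**} \to A_2^{**}$ is still an (anti-)$*$-isomorphism, intertwines $\tilde\Delta_{A_1}$ with $\tilde\Delta_{A_2}$, and satisfies $\tilde\pi_2 \theta_0^{**} = \theta \tilde\pi_1$. Define the candidate map $\widetilde{T}_* : A_2^* \to A_1^*$ by $\widetilde{T}_*(\omega) = \theta_0^*(v\omega)$, whose adjoint is $\widetilde{T}(x) = \theta_0^{**}(x) v$. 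That $\widetilde{T}_*$ is an isometric algebra homomorphism is direct: isometry from unitarity of $v$ and $\theta_0$ being a C$^*$-isomorphism, and the algebra property from the computation
\[ \tilde\Delta_{A_2}(\widetilde{T}(x)) = (\theta_0^{**} \otimes \theta_0^{**})\tilde\Delta_{A_1}(x) \cdot (v \otimes v) = (\widetilde{T} \otimes \widetilde{T}) \tilde\Delta_{A_1}(x), \]
valid identically in both the isomorphism and commutant-isomorphism cases since $\tilde\Delta_{A_2}(v) = v \otimes v$ and the tensor-legs of $\theta_0^{**} \otimes \theta_0^{**}$ do not see whether $\theta_0^{**}$ is $*$- or anti-$*$-.

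Finally, I show $T_* = \widetilde{T}_*$. On the ideal $\tilde\pi_{2,*}(L^1(\G_2))$, the left $A_2^{**}$-module action factors through $\tilde\pi_2$, so $v \cdot \tilde\pi_{2,*}(\omega) = \tilde\pi_{2,*}(u\omega)$. Combined with $\theta_0^* \tilde\pi_{2,*} = \tilde\pi_{1,*} \theta_*$ (the preadjoint of $\tilde\pi_2 \theta_0^{**} = \theta \tilde\pi_1$), this gives $\widetilde{T}_*(\tilde\pi_{2,*}(\omega)) = \tilde\pi_{1,*}(T_r(\omega)) = T_*(\tilde\pi_{2,*}(\omega))$. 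Since $T_*$ and $\widetilde{T}_*$ are both algebra homomorphisms agreeing on the ideal, for $\mu \in A_2^*$ and $\nu$ in the ideal, expanding $T_*(\mu\nu) = \widetilde{T}_*(\mu\nu)$ yields $(T_*(\mu) - \widetilde{T}_*(\mu)) T_*(\nu) = 0$; as $T_*(\tilde\pi_{2,*}(L^1(\G_2))) = \tilde\pi_{1,*}(L^1(\G_1))$ is essential in $A_1^*$, this forces $T_* = \widetilde{T}_*$. The main obstacle I anticipate is keeping the bookkeeping straight between the $*$-iso and anti-$*$-iso cases and the various dual/extension operations; the final essential-ideal argument itself is the cleanest conceptual input, packaging the analytic content of Section~\ref{sec:ell1case} into a rigidity statement for $T_*$ at the level of $A^*$.
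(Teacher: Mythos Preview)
Your proposal is correct and follows essentially the same route as the paper's proof: restrict $T_*$ to the essential ideal $\tilde\pi_{2,*}(L^1(\G_2))$ via Proposition~\ref{prop:eight}, apply Theorem~\ref{thm:main} to obtain $\theta$ and hence the associated $\theta_0$ from Section~\ref{sec:qgir}, check agreement of $T_*$ with $\omega\mapsto\theta_0^*(v\omega)$ on the ideal, and then use essentiality to propagate to all of $A_2^*$. The only cosmetic difference is that you package the candidate map $\widetilde{T}_*$ explicitly and verify its multiplicativity via $\tilde\Delta_{A_2}(v)=v\otimes v$, whereas the paper instead unwinds the identity $v(\tilde\pi_{2,*}(\omega)\mu)=(v\tilde\pi_{2,*}(\omega))(v\mu)$ directly inside the essential-ideal computation; the content is identical.
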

\begin{proof}
By previous work, $u=T_r^*(1)$ is a member of the intrinsic group of
$L^\infty(\G_2)$, and the map
$\theta : L^\infty(\G_1)\rightarrow L^\infty(\G_2); x \mapsto T_r^*(x)u^*$ is
either a normal $*$-isomorphism, or a normal anti-$*$-isomorphism, which in
either case intertwines the coproduct.

Suppose we are in the first case, where $\theta$ is a $*$-isomorphism.
Then we have an associated $*$-isomorphism $\theta_0:A_1\rightarrow A_2$
which intertwines the coproducts, and which satisfies $\pi_2 \theta_0
= \theta \pi_1$.  Taking adjoints gives that $\theta_0^* \tilde\pi_{2,*}
= \tilde\pi_{1,*} \theta_*$.

For $\omega\in L^1(\G_2)$, we have that $\theta_*(\omega) = T_r(u^*\omega)$.
Also, $T_r$ is constructed so that $T_* \tilde\pi_{2,*} = \tilde\pi_{1,*} T_r$.
Thus
\[ T_*\big( \tilde\pi_{2,*}(u^*\omega) \big)
= \tilde\pi_{1,*}\big( T_r(u^*\omega) \big)
= \tilde\pi_{1,*}\big( \theta_*(\omega) \big)
= \theta_0^*\big(\tilde\pi_{2,*}(\omega)\big)
\qquad (\omega\in L^1(\G_2)). \]

Recall that $v = T(1) \in A_2^{**}$ is also unitary.  Then $u = T_r^*(1)
= T_r^* \tilde\pi_1(1) = \tilde\pi_2 T(1) = \tilde\pi_2(v)$.
A simple calculation shows that $v \tilde\pi_{2,*}(\omega)
= \tilde\pi_{2,*}(u\omega)$ for $\omega\in L^1(\G_2)$.

Let $\mu\in A_2^*$ and $\omega\in L^1(\G_2)$, so we can find
$\omega'\in L^1(\G_2)$ with $\tilde\pi_{2,*}(\omega') =
\tilde\pi_{2,*}(\omega) \mu$.  Then
\begin{align*}
T_*\big( \tilde\pi_{2,*}(\omega) \big) T_*(\mu)
&= T_*\big( \tilde\pi_{2,*}(\omega') \big)
= \theta_0^*\big(\tilde\pi_{2,*}(u\omega')\big)
= \theta_0^*\big(v\tilde\pi_{2,*}(\omega')\big)
= \theta_0^*\big(v\big(\tilde\pi_{2,*}(\omega)\mu\big)\big) \\
&= \theta_0^*\big( (v\tilde\pi_{2,*}(\omega)) (v\mu) \big)
= \theta_0^*(v\tilde\pi_{2,*}(\omega)) \theta_0^*(v\mu)
= T_*\big( \tilde\pi_{2,*}(\omega) \big) \theta_0^*(v\mu).
\end{align*}
Recall that, from the hypothesis, $\tilde\pi_{1,*}(L^1(\G_2))$ is an essential
ideal in $A_1^*$.  As $T_*$ bijects $\tilde\pi_{2,*}(L^1(\G_2))$
to $\tilde\pi_{1,*}(L^1(\G_2))$, we see that
\[ T_*(\mu) = \theta_0^*(v\mu) \qquad (\mu\in A_2^*), \]
as claimed.

The other case, when $\theta$ is a quantum group commutant isomorphism,
is entirely analogous.
\end{proof}

The previous theorem needs a characterisation of the intrinsic group of
$A^{**}$, for $A$ a quantum group above $C_0(\G)$.  The following results
show that it is enough to know the intrinsic group of $L^\infty(\G)$.

\begin{lemma}
Let $A$ be a Banach algebra, and let $I\subseteq A$ be a closed ideal.
Let $\Phi_I$ be the character space $I$, and let $X$ be the collection
of characters on $A$ which do not restrict to the zero functional on $I$.
Then restriction of linear functionals gives a bijection from $X$ to $\Phi_I$.
\end{lemma}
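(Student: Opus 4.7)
The map sending $\chi\in X$ to $\chi|_I$ is well defined into $\Phi_I$ by the definition of $X$. The plan is to establish injectivity by a direct computation, and then to prove surjectivity by explicitly constructing an extension; the crux of the surjectivity argument is showing that $\ker\psi$ is a two-sided ideal not just of $I$ but of all of $A$.

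For injectivity: suppose $\chi_1,\chi_2\in X$ both restrict to $\psi\in\Phi_I$. Pick any $b\in I$ with $\psi(b)\ne 0$ (which exists as $\psi\ne 0$). For arbitrary $a\in A$ we have $ab\in I$, so $\chi_i(ab)=\psi(ab)$ for $i=1,2$; thus $\chi_1(a)\psi(b)=\chi_1(ab)=\chi_2(ab)=\chi_2(a)\psi(b)$, forcing $\chi_1(a)=\chi_2(a)$.

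For surjectivity: given $\psi\in\Phi_I$, fix $b_0\in I$ with $\psi(b_0)=1$ and define $\chi(a)=\psi(ab_0)$ for $a\in A$. This is linear and continuous, and since for $a\in I$ we have $\psi(ab_0)=\psi(a)\psi(b_0)=\psi(a)$, it extends $\psi$. The key intermediate step is to show that $\ker\psi$ is stable under left and right multiplication by $A$. Given $u\in\ker\psi$ and $a\in A$, both $au$ and $ua$ lie in $I$; using $b_0,au\in I$ one computes $\psi(au)=\psi(b_0)\psi(au)=\psi(b_0\cdot au)=\psi((b_0a)u)=\psi(b_0 a)\psi(u)=0$, and symmetrically $\psi(ua)=\psi(ua\cdot b_0)=\psi(u\cdot ab_0)=\psi(u)\psi(ab_0)=0$.

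With $\ker\psi$ now known to be an $A$-bimodule, independence of $\chi$ from the choice of $b_0$ is immediate: if $\psi(b_0')=1$ too, then $b_0-b_0'\in\ker\psi$, so $a(b_0-b_0')\in\ker\psi$ and $\psi(ab_0)=\psi(ab_0')$. For multiplicativity, one computes $\chi(a_1)\chi(a_2)=\psi(a_1b_0)\psi(a_2b_0)=\psi(a_1b_0\cdot a_2b_0)=\psi((a_1b_0a_2)b_0)=\psi(a_1b_0a_2)$; meanwhile $\chi(a_1a_2)=\psi(a_1a_2b_0)$, and the equality $\psi(a_1b_0a_2)=\psi(a_1a_2b_0)$ follows from the observation that $b_0a_2-a_2b_0\in\ker\psi$ (check: $\psi(b_0a_2)=\psi(b_0a_2)\psi(b_0)=\psi(b_0\cdot a_2b_0)=\psi(a_2b_0)$) combined with $A$-stability of $\ker\psi$ applied to $a_1(b_0a_2-a_2b_0)$. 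The main obstacle is exactly this apparent circularity between well-definedness, multiplicativity, and $A$-stability of $\ker\psi$; the resolution is to prove the last of these first, purely from the character property of $\psi$ on $I$ together with the two-sided ideal property of $I$ in $A$.
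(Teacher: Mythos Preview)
Your argument is correct and follows essentially the same route as the paper: both prove injectivity by multiplying against a fixed $b_0\in I$ with $\psi(b_0)=1$, and both construct the extension by $\chi(a)=\psi(ab_0)$. The only difference is organisational: where you first establish that $\ker\psi$ is an $A$-bimodule and then deduce multiplicativity, the paper verifies $\chi(ab)=\chi(a)\chi(b)$ in a single direct chain of equalities, $u(aba_0)=u(a_0)u(aba_0)=u(a_0a)u(ba_0)=u(a_0aa_0)f(b)=f(a)f(b)$, avoiding the intermediate kernel lemma altogether.
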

\begin{proof}
Let $f,g\in X$ induce the same (non-zero) character on $I$.  Pick $a_0\in I$
with $f(a_0) = g(a_0)=1$.  Then, for $a\in A$, we see that
$f(a) = f(a)f(a_0) = f(aa_0) = g(aa_0) = g(a)g(a_0) = g(a)$, using that
$aa_0\in I$.  Thus $f=g$, so the restriction map is injective.

Now let $u\in\Phi_I$, and pick $a_0\in I$ with $u(a_0)=1$.  Define $f\in A^*$
by $f(a) = u(aa_0)$ for each $a\in A$.  Then, for $a,b\in A$,
\begin{align*} f(ab) &= u(aba_0) = u(a_0)u(aba_0) = u(a_0a ba_0)
= u(a_0a) u(ba_0) = u(a_0a) u(a_0) f(b) \\
&= u(a_0aa_0) f(b) = u(a_0) u(aa_0) f(b) = f(a) f(b). \end{align*}
So $f$ is a character on $A$.  For $a\in I$, also $f(a) = u(aa_0) = u(a)u(a_0)
= u(a)$, and so $f\in X$ and $f$ restricts to $u$.  Thus the restriction map
is a bijection.
\end{proof}

The following should be compared with \cite[Theorem~1]{wal}
where Walter shows this in the cocommutative case.

\begin{theorem}\label{thm:five}
Let $(A,\Delta_A,\pi)$ be a quantum group above $C_0(\G)$.  For a character
$u$ on $A^*$, the following are equivalent:
\begin{enumerate}
\item $u$ is a member of the intrinsic group of $A^{**}$;
\item $u$ is invertible in $A^{**}$;
\item $\tilde\pi(u)\not=0$, that is, $u$ does not induce the zero
functional on $\tilde\pi_*(L^1(\G))$.
\end{enumerate}
Moreover, $\tilde\pi:A^{**}\rightarrow L^\infty(\G)$ restricts to a bijection
between the intrinsic groups of $A^{**}$ and $L^\infty(\G)$.
\end{theorem}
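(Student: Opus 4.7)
The plan is to first apply Theorem~\ref{prop:one} to the Hopf-von Neumann algebra $(A^{**}, \tilde\Delta_A)$, identifying characters on $A^*$ with nonzero elements $u \in A^{**}$ satisfying $\tilde\Delta_A(u) = u \otimes u$. Then I would prove the cycle $(1) \Rightarrow (2) \Rightarrow (3) \Rightarrow (1)$. The first implication is trivial since unitaries are invertible. The second follows because if $uv = 1$ in $A^{**}$ then $\tilde\pi(u)\tilde\pi(v) = 1$ in $L^\infty(\G)$ (as $\tilde\pi$ is unital), forcing $\tilde\pi(u) \neq 0$.

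The main work is in $(3) \Rightarrow (1)$. Assume $\tilde\pi(u) \neq 0$. Since $\tilde\pi$ intertwines the coproducts, $\tilde\pi(u)$ is a nonzero element of $L^\infty(\G)$ with $\Delta(\tilde\pi(u)) = \tilde\pi(u) \otimes \tilde\pi(u)$, so by the second half of Theorem~\ref{prop:one} it is already a unitary. To upgrade this to unitarity of $u$ itself, set $a = u^*u$; then $a \geq 0$, $\tilde\Delta_A(a) = a \otimes a$, and $\tilde\pi(a) = 1$, while $\|a\| = \|a \otimes a\| = \|a\|^2$ forces $\|a\| = 1$. Running the Borel functional calculus argument from the proof of Theorem~\ref{prop:one}, one checks that for each $r \in (0, 1]$ the spectral projection $p_r = \chi_{[r, 1]}(a) \in A^{**}$ satisfies $\tilde\Delta_A(p_r) \leq p_r \otimes p_r$. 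Proposition~\ref{prop:seven} then yields the dichotomy $p_r = 1$ or $p_r \leq 1 - \supp\tilde\pi$; however $\tilde\pi(p_r) = \chi_{[r, 1]}(1) = 1$ rules out the latter option, so $p_r = 1$ for every $r \in (0, 1]$. Taking $r = 1$ gives $a = 1$, and the same argument applied to $uu^*$ then shows that $u$ is unitary, and hence a member of the intrinsic group of $A^{**}$.

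For the moreover clause, the equivalence just proved identifies the intrinsic group of $A^{**}$ with the set of characters on $A^*$ which do not restrict to zero on the essential ideal $\tilde\pi_*(L^1(\G))$. By the preceding lemma, restriction is then a bijection from this set onto the characters on $\tilde\pi_*(L^1(\G))$, and the algebra isomorphism $\tilde\pi_* : L^1(\G) \to \tilde\pi_*(L^1(\G))$ pulls these back to characters on $L^1(\G)$, which by Theorem~\ref{prop:one} are precisely the intrinsic group of $L^\infty(\G)$. Tracing the identifications shows the composite bijection is $u \mapsto \tilde\pi(u)$, as required. The principal obstacle is the lifting step in $(3) \Rightarrow (1)$, where Proposition~\ref{prop:seven} is indispensable for passing spectral information about $\tilde\pi(a)$ back up to $a$.
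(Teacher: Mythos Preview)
Your proof is correct, but the paper's argument for $(3)\Rightarrow(1)$ is considerably shorter and avoids both the spectral calculus and Proposition~\ref{prop:seven}. The paper simply observes that $u^*u$ is itself a character on $A^*$ (since $\tilde\Delta_A(u^*u)=u^*u\otimes u^*u$) with $\tilde\pi(u^*u)=\tilde\pi(u)^*\tilde\pi(u)=1\neq 0$, so $u^*u$ lies in the set $X_3$ of characters not annihilated by $\tilde\pi$. But the preceding lemma, applied with $I=\tilde\pi_*(L^1(\G))$, already shows that $\tilde\pi$ is \emph{injective} on $X_3$; since $1\in X_3$ with $\tilde\pi(1)=1=\tilde\pi(u^*u)$, one concludes $u^*u=1$ immediately, and likewise $uu^*=1$.

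Your route is legitimate and self-contained: it shows that the functional-calculus argument of Theorem~\ref{prop:one} transplants to $A^{**}$ once Proposition~\ref{prop:seven} is available as a substitute for \cite[Lemma~6.4]{kv} (note you need the slightly sharper bound $\tilde\Delta_A(p_r)\leq p_r\otimes p_r$ rather than $\leq p_r\otimes 1$, but the same pointwise reasoning gives it). However, this machinery is unnecessary here, since the injectivity of $\tilde\pi$ on characters---which you in any case invoke for the moreover clause---settles the matter in one line. Your treatment of the moreover clause matches the paper's.
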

\begin{proof}
Let $Y$ be the intrinsic group of $L^\infty(\G)$, which by
Theorem~\ref{prop:one} is the character space of $L^1(\G)$.
Let $X_1$ be the intrinsic group of $A^{**}$, let $X_2$ be the
collection of invertible characters, and let $X_3$ be the collection
of characters not sent to zero by $\tilde\pi$.  If $u\in X_2$ then
$1 = \tilde\pi(1) = \tilde\pi(u u^{-1}) = \tilde\pi(u) \tilde\pi(u^{-1})$,
showing that $\tilde\pi(u)\not=0$ and hence $u\in X_3$.  Thus
$X_1 \subseteq X_2 \subseteq X_3$.  By the lemma, $\tilde\pi$ restricts
to a bijection between $X_3$ and $Y$.

Let $u\in X_3$, so by Theorem~\ref{prop:one}, $\tilde\Delta_A(u) = u \otimes u$.
As $\tilde\Delta_A$ is a $*$-homomorphism, also $u^*u$ is a character.
As $\tilde\pi(u)\in L^\infty(\G)$ is a (non-zero) character, it is unitary, and
so $1 = \tilde\pi(u)^* \tilde\pi(u) = \tilde\pi(u^*u)$.  Thus $u^*u\in X_3$,
and as $\tilde\pi$ injects on $X_3$, and $1\in X_3$, we conclude that
$u^*u=1$.  Similarly, $uu^*=1$.  Thus $u$ is a member of the intrinsic group
of $A^{**}$, that is, $u\in X_1$.  We hence have the required equalities
$X_1 = X_2 = X_3$.
\end{proof}

In special cases, we can say more.

\begin{proposition}
The intrinsic group of $C_u(\G)^{**}$, respectively $C_0(\G)^{**}$,
is a subgroup of the unitary group of $M(C_u(\G))$, respectively $M(C_0(\G))$.
\end{proposition}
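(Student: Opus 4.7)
The plan is to combine two ingredients.  Theorem~\ref{thm:five} shows that $\tilde\pi$ restricts to a bijection between the intrinsic group of $A^{**}$ and that of $L^\infty(\G)$, while Theorem~\ref{prop:one} places the intrinsic group of $L^\infty(\G)$ inside $M(C_0(\G))$.  It therefore suffices, for each $v$ in the intrinsic group of $L^\infty(\G)$, to exhibit a group-like unitary in $M(A)$ that maps to $v$ under $\tilde\pi$; the injectivity part of Theorem~\ref{thm:five} then forces the $A^{**}$-preimage of $v$ to coincide with that element.

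For $A=C_0(\G)$ this is direct: $v$ itself lies in $M(C_0(\G))$ by Theorem~\ref{prop:one}, and sits canonically in $C_0(\G)^{**}$.  I would verify, using that the normal extension $\tilde\Delta_0$ on $C_0(\G)^{**}$ and the strict extension $\Delta_0\colon M(C_0(\G))\to M(C_0(\G)\otimes C_0(\G))$ agree on $M(C_0(\G))$ (both being canonical extensions of $\Delta_0|_{C_0(\G)}$), that $\tilde\Delta_0(v)=v\otimes v$ in $C_0(\G)^{**}\vnten C_0(\G)^{**}$; so $v$ is a group-like unitary of $C_0(\G)^{**}$.  Similarly, $\tilde\pi$ restricted to $M(C_0(\G))$ is the canonical inclusion into $L^\infty(\G)$, so $\tilde\pi(v)=v$, which closes this case.

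For $A=C_u(\G)$, I would use the universal property of $C_u(\G)$ from \cite{kusun}, specifically a bi-universal fundamental unitary $\mathbb{V}\in M(C_u(\G)\otimes C_u(\hat\G))$ lifting the fundamental unitary and satisfying $(\pi\otimes\iota)(\mathbb{V})$ equal to the appropriate half-universal unitary in $M(C_0(\G)\otimes C_u(\hat\G))$.  The duality between $1$-dimensional unitary corepresentations of $\G$ and characters of $C_u(\hat\G)$ realised by this half-universal unitary converts $v$ into a character $\chi_v\colon C_u(\hat\G)\to\mathbb{C}$.  I would then set $\tilde v:=(\iota\otimes\chi_v)(\mathbb{V})\in M(C_u(\G))$; the corepresentation identity for $\mathbb{V}$ gives $\Delta_u(\tilde v)=\tilde v\otimes\tilde v$, while the compatibility with $\pi$ gives $\pi(\tilde v)=v$.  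Theorem~\ref{thm:five} then identifies $\tilde v$, viewed in $C_u(\G)^{**}$, with the $\tilde\pi$-preimage of $v$.

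The principal obstacle is the $C_u$ case, for which one must invoke and correctly use the universal-level machinery of \cite{kusun}---the bi-universal fundamental unitary, its compatibility with $\pi$, and the character/corepresentation duality.  Once this structural input is granted, both the group-likeness of $\tilde v$ and the identification with the $\tilde\pi$-preimage are formal; the $C_0$ case by contrast is essentially a bookkeeping exercise about canonical embeddings.
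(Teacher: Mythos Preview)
Your proposal is correct and follows essentially the same route as the paper: in both cases one takes $v=\tilde\pi(x)\in M(C_0(\G))$ via Theorem~\ref{prop:one}, exhibits a group-like unitary in $M(A)$ sitting over $v$, and then invokes the injectivity in Theorem~\ref{thm:five}. The only cosmetic difference is that for $A=C_u(\G)$ the paper cites \cite[Proposition~6.6]{kusun} directly to obtain the lift $x_0\in M(C_u(\G))$ of the one-dimensional unitary corepresentation $v$, whereas you unpack that same lift by hand via the bi-universal unitary $\mathbb V$ and the corepresentation/character duality; this is precisely the mechanism behind the cited proposition, so the arguments coincide.
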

\begin{proof}
Let $x\in C_u(\G)^{**}$ be a member of the intrinsic group, and set $y
= \tilde\pi(x) \in L^\infty(\G)$.  By Theorem~\ref{prop:one}, we have that
$y$ is unitary, and $y\in M(C_0(\G))$.  Thus, in the language of
\cite[Proposition~6.6]{kusun}, $y$ is a unitary corepresentation of
$C_0(\G)$ on $\mathbb C$, and so there is $x_0 \in M(C_u(\G))$ with
$\pi(x_0) = y$ and $\Delta_u(x_0)=x_0\otimes x_0$.  By uniqueness
(from the previous theorem) we must have
that $x_0 = x$, treating $M(C_u(\G))$ as a subalgebra of $C_u(\G)^{**}$.

Now let $x\in C_0(\G)^{**}$ be a member of the intrinsic group.  Again,
$\tilde\pi(x) = y \in M(C_0(\G))$, so let $x_0$ be the image of $y$ under
the embedding $M(C_0(\G)) \rightarrow C_0(\G)^{**}$.  Thus $x_0$ is a member
of the intrinsic group of $C_0(\G)^{**}$ and $\tilde\pi(x_0)=\tilde\pi(x)$,
so again by uniqueness, we conclude that $x_0=x$.
\end{proof}

\subsection{The picture under duality}

We now show that by using the duality theory of locally compact
quantum groups, we can handle the more general situation; this
also gives results more reminiscent of those for Kac algebras, see
\cite[Section~5.6]{es}.

Let $(A,\Delta_A,\pi)$ be a quantum group above $C_0(\G)$.
As $L^1(\G)$ is an essential ideal in $A^*$, each member of $A^*$
induces a (completely bounded) multiplier (or centraliser) of $L^1(\G)$.
Let us introduce the notation that given $\mu\in A^*$, we have maps
$L_\mu, R_\mu:L^1(\G) \rightarrow L^1(\G)$ with
\[ \tilde\pi_* L_\mu(\omega) = \mu \tilde\pi_*(\omega), \quad
\tilde\pi_* R_\mu(\omega) = \tilde\pi_*(\omega) \mu
\qquad (\omega\in L^1(\G)). \]
Let us denote by $M_{cb}(L^1(\G))$ the algebra of completely bounded
multipliers of $L^1(\G)$.  In \cite[Theorem~8.9]{daws}, a homomorphism
$\Lambda:M_{cb}(L^1(\G)) \rightarrow C^b(\hat\G)$ was constructed
(and a more general construction, with one-sided multipliers, is
given in \cite{jnr}).  We hence find a map, which we shall continue
to denote by $\Lambda$, from $A^*$ to $C^b(\hat\G)$, which is
uniquely determined by the properties that
\[ \Lambda(\mu)\lambda(\omega) = \lambda(L_\mu(\omega)), \quad
\lambda(\omega)\Lambda(\mu) = \lambda(R_\mu(\omega))
\qquad (\mu\in A^*, \omega\in L^1(\G)). \]

An important link between multipliers and the antipode is established
in \cite{daws1}.  In particular, given $\mu\in A^*$, define an associated
left multiplier $L_\mu^\dagger$ by
\[ L_\mu^\dagger(\omega) = L_\mu(\omega^*)^* \quad\text{so}\quad
\tilde\pi_* L_\mu^\dagger(\omega)
= \big( \mu \tilde\pi_*(\omega^*) \big)^*
= \mu^* \tilde\pi_*(\omega), \]
that is, $L_\mu^\dagger = L_{\mu^*}$.
(Recall that $\omega^*$ is the normal functional $x\mapsto
\overline{ \ip{x^*}{\omega} }$ so this calculation follows immediately
from the fact that $\tilde\Delta_A$ and $\tilde\pi$ are $*$-homomorphisms).
Then \cite[Theorem~5.9]{daws1} shows that $\Lambda(\mu^*) \in D(\hat S)^*$
and $\Lambda(\mu) = \hat S(\Lambda(\mu^*)^*)$.

For $\lambda:L^1(\G) \rightarrow C_0(\G)$ (which $\Lambda$ extends) we can
see this directly.  Recall (see \cite[Proposition~8.3]{kv}) that
$\hat S((\iota\otimes\omega)(\hat W)) = (\iota\otimes\omega)(\hat W^*)$.
As $\hat W = \sigma W^*\sigma$, we see that $\lambda(\omega)
= (\omega\otimes\iota)(W) = \hat S((\omega\otimes\iota)(W^*))
= \hat S(\lambda(\omega^*)^*)$.

\begin{lemma}\label{lem:nine}
Let $u\in L^\infty(\G)$ be a member of the intrinsic group.  For
$x\in L^\infty(\hat\G)$, let $\hat\gamma_u(x) = uxu^*$.  Then $\hat\gamma_u$
is a $*$-automorphism of $L^\infty(\hat\G)$, which restricts to a $*$-automorphism
of $C_0(\hat\G)$.  Furthermore, $\hat\gamma_u\lambda(\omega) =
\lambda(u\omega)$ for $\omega\in L^1(\G)$.
\end{lemma}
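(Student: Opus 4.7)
The plan is to reduce the claim to a slice-map computation on the fundamental unitary $W$ and then bootstrap by density. Since $u$ lies in the intrinsic group of $L^{\infty}(\G)$, it is unitary with $\Delta(u)=u\otimes u=W^{*}(1\otimes u)W$, which rearranges to $(1\otimes u)W=W(u\otimes u)$ and hence
\[
(1\otimes u)W(1\otimes u^{*}) \;=\; W(u\otimes 1).
\]

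Applying the slice map $(\omega\otimes\iota)$ for $\omega\in L^{1}(\G)$, and matching the one-sided module action on $L^{1}(\G)$ introduced in Section~\ref{sec:ell1case} (namely $\ip{z}{u\omega} = \ip{zu}{\omega}$), this yields at once
\[
u\,\lambda(\omega)\,u^{*}
\;=\; (\omega\otimes\iota)\bigl((1\otimes u)W(1\otimes u^{*})\bigr)
\;=\; (\omega\otimes\iota)\bigl(W(u\otimes 1)\bigr)
\;=\; \lambda(u\omega).
\]
This simultaneously proves the intertwining identity and shows that $\hat\gamma_{u}$ preserves the space $\lambda(L^{1}(\G))$.

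It remains to promote this to the algebras. Conjugation by the unitary $u$ is a normal $*$-automorphism of $\mc B(L^{2}(\G))$, so it is both $\sigma$-weakly and norm continuous. Since $\lambda(L^{1}(\G))$ is $\sigma$-weakly dense in $L^{\infty}(\hat\G)$, this forces $\hat\gamma_{u}(L^{\infty}(\hat\G))\subseteq L^{\infty}(\hat\G)$; running the same argument with $u^{*}$ in place of $u$, which is again in the intrinsic group by Theorem~\ref{prop:one} (since $u^{*}=S(u)$), gives the reverse inclusion and so a $*$-automorphism of $L^{\infty}(\hat\G)$. For the restriction to $C_{0}(\hat\G)$, one invokes the corresponding norm-density of $\lambda(L^{1}(\G))$ (equivalently, of $\lambda(\mc B(L^{2}(\G))_{*})$) in $C_{0}(\hat\G)$; norm continuity of $\hat\gamma_{u}$ together with the same two-sided argument then produces a $*$-automorphism of $C_{0}(\hat\G)$ by restriction.

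The only genuinely delicate step is the slice-map computation: one must extract the commutation relation between $u$ and $W$ from $\Delta(u) = u\otimes u$ and then carefully identify $(\omega\otimes\iota)(W(u\otimes 1))$ with $\lambda(u\omega)$, making sure to use the correct convention for the module action of $L^{\infty}(\G)$ on $L^{1}(\G)$. Everything after that is a formal density argument.
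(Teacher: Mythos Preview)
Your argument is correct and follows essentially the same route as the paper: derive $(1\otimes u)W(1\otimes u^{*})=W(u\otimes 1)$ from $\Delta(u)=u\otimes u$, apply the slice $(\omega\otimes\iota)$ to obtain $\hat\gamma_u\lambda(\omega)=\lambda(u\omega)$, and then pass to $C_0(\hat\G)$ and $L^\infty(\hat\G)$ by density. The paper's proof is slightly more terse (it simply notes that $\hat\gamma_{u^{*}}$ is the inverse rather than invoking Theorem~\ref{prop:one}), but the content is the same.
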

\begin{proof}
We have that $\Delta(u)=u\otimes u$, so $W^*(1\otimes u)W = u\otimes u$.
Using that $W$ and $u$ are unitary, it follows that $(1\otimes u)W(1\otimes u^*)
= W(u\otimes 1)$.  Then, for $\omega\in L^1(\G)$,
\[ \hat\gamma_u\lambda(\omega) = u(\omega\otimes\iota)(W)u^*
= (\omega\otimes\iota)\big( (1\otimes u)W(1\otimes u^*) \big)
= (\omega\otimes\iota)\big( W(u\otimes 1) \big)
= \lambda(u\omega), \]
as claimed.  By density, it follows that $\hat\gamma_u$ is a self-map of
$C_0(\hat\G)$, which clearly has the inverse $\hat\gamma_{u^*}$.
As $\hat\gamma_u$ is normal, it follows that $\hat\gamma_u$ is also an
automorphism of $L^\infty(\hat\G)$.
\end{proof}

For the construction of $\hat\theta$ in the following, we again
refer the reader to Section~\ref{sec:qgir}.

\begin{theorem}\label{thm:three}
For $i=1,2$, let $\G_i$ be a locally compact quantum groups, and let
$(A_i,\Delta_{A_i},\pi_i)$ be a quantum group above $C_0(\G_i)$.
Let $T_*:A_2^* \rightarrow A_1^*$ be a bijective isometric algebra
homomorphism, and set $T=(T_*)^*$.
Then $v=T(1)$ and $u = \tilde\pi_2(v)$ are members of the
intrinsic groups of $A_2^{**}$ and $L^\infty(\G_2)$, respectively.
Then either:
\begin{enumerate}
\item\label{thm:threeone} There is a quantum group isomorphism
$\theta:L^\infty(\G_1) \rightarrow L^\infty(\G_2)$, leading to a quantum
group isomorphism $\hat\theta:L^\infty(\hat\G_2)\rightarrow
L^\infty(\hat\G_1)$, with
\[ \Lambda_1 T_* = \hat\theta \hat\gamma_u \Lambda_2. \]
\item\label{thm:threetwo}
There is a quantum group commutant isomorphism
$\theta:L^\infty(\G_1) \rightarrow L^\infty(\G_2)$, leading to a quantum
group isomorphism $\hat\theta:L^\infty(\hat\G_2^\op)\rightarrow
L^\infty(\hat\G_1)$, with
\[ \Lambda_1 T_* = \hat\theta \hat R_2 \hat S_2^{-1} \hat\gamma_u \Lambda_2. \]
\end{enumerate}
In particular, $\G_1$ is isomorphic to either $\G_2$ or $\G_2'$.
\end{theorem}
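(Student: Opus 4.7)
My strategy is to leverage Theorem~\ref{thm:main} to describe the restriction of $T_*$ to the $L^1$-algebras, and then transport this description to the full dual $A_2^*$ via the multiplier map $\Lambda$. By Proposition~\ref{prop:eight}, the restriction $T_r\colon L^1(\G_2)\to L^1(\G_1)$ is a well-defined isometric algebra isomorphism. Applying Theorem~\ref{thm:main} to $T_r$ produces a unitary $u\in L^\infty(\G_2)$ in the intrinsic group and either a quantum group isomorphism or a commutant isomorphism $\theta\colon L^\infty(\G_1)\to L^\infty(\G_2)$ satisfying $T_r(\omega)=\theta_*(u\omega)$. A direct computation using the defining relation $\tilde\pi_{1,*}T_r=T_*\tilde\pi_{2,*}$ shows that $T_r^*(1)=\tilde\pi_2(T(1))=\tilde\pi_2(v)$, so by Theorem~\ref{thm:five} both $v$ and $u=\tilde\pi_2(v)$ lie in their respective intrinsic groups.

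The pivotal calculation is to identify the left multiplier $L_{T_*(\mu)}$ for $\mu\in A_2^*$. Using $\theta_*(\omega)=T_r(u^*\omega)$ for $\omega\in L^1(\G_2)$, so that $\tilde\pi_{1,*}(\theta_*(\omega))=T_*(\tilde\pi_{2,*}(u^*\omega))$, multiplication in $A_1^*$ together with the multiplicativity of $T_*$ yields
\[
L_{T_*(\mu)}(\theta_*(\omega))=\theta_*\bigl(u\cdot L_\mu(u^*\omega)\bigr).
\]
This identity is valid in both cases, since its derivation uses only the homomorphism property of $T_*$.

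In Case~(\ref{thm:threeone}), $\theta$ is a quantum group isomorphism, and Section~\ref{sec:qgir} supplies $\hat\theta\colon L^\infty(\hat\G_2)\to L^\infty(\hat\G_1)$ with $\hat\theta\lambda_2=\lambda_1\theta_*$. Applying $\lambda_1$ to the displayed identity and successively invoking Lemma~\ref{lem:nine} together with the multiplicativity of $\hat\gamma_u$ and of $\hat\theta$, I would rewrite the right-hand side as $\hat\theta(\hat\gamma_u\Lambda_2(\mu))\cdot\lambda_1(\theta_*(\omega))$; the two applications of Lemma~\ref{lem:nine} neatly cancel the $u$ and $u^*$. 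Since $\lambda_1(L^1(\G_1))$ is $\sigma$-weakly dense in $L^\infty(\hat\G_1)$ and left multiplication by a fixed element is $\sigma$-weakly continuous, this forces the equality $\Lambda_1T_*=\hat\theta\hat\gamma_u\Lambda_2$.

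The hardest step is Case~(\ref{thm:threetwo}), in which $\theta$ is anti-$*$ and $\theta_*$ is anti-multiplicative. The direct relation $\hat\theta\lambda_2=\lambda_1\theta_*$ no longer holds; instead, via the identification $(\G_2')\hat{\ }=\hat\G_2^{\op}$ recalled in Section~\ref{sec:qgir}, $\hat\theta$ is a $*$-isomorphism $L^\infty(\hat\G_2^{\op})\to L^\infty(\hat\G_1)$, and the anti-multiplicativity of $\theta_*$ must be straightened by an application of the antipode on the dual side. My plan is to exploit the antipodal characterisation $\Lambda(\mu)=\hat S(\Lambda(\mu^*)^*)$ from \cite[Theorem~5.9]{daws1}: this converts the factor $\theta_*(uL_\mu(u^*\omega))$, in which $\theta_*$ reverses multiplication, into an expression that can be fed through a genuine $*$-isomorphism at the price of the composite $\hat R_2\hat S_2^{-1}$ (which formally incarnates $\hat\tau_{2,i/2}$ and encodes precisely the twist needed to reconcile an anti-isomorphism of $L^\infty(\G_2)$ with a $*$-isomorphism of $L^\infty(\hat\G_2^{\op})$). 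Modulo this antipodal bookkeeping, the argument mirrors Case~(\ref{thm:threeone}); the closing assertion that $\G_1$ is isomorphic to $\G_2$ or $\G_2'$ is then immediate from the $*$- or anti-$*$-character of $\theta$.
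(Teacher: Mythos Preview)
Your treatment of Case~(\ref{thm:threeone}) is essentially the paper's argument: the identity
\(L_{T_*(\mu)}(\theta_*(\omega))=\theta_*\bigl(u\,L_\mu(u^*\omega)\bigr)\)
is a clean repackaging of the computation in the proof, and applying $\lambda_1$, Lemma~\ref{lem:nine}, and the intertwining $\hat\theta\lambda_2=\lambda_1\theta_*$ gives the result. (A minor point: your density argument via $\sigma$-weak continuity works, but the paper instead uses norm density of $\{\hat\gamma_u\lambda_2(\omega)\}$ in $C_0(\hat\G_2)$ and cancels inside $M(C_0(\hat\G_2))$.)

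Case~(\ref{thm:threetwo}) has a genuine gap, and the diagnosis is off. You write that ``$\theta_*$ is anti-multiplicative'' and that this must be ``straightened''; but in fact $\theta_*$ is \emph{multiplicative} even when $\theta$ is anti-$*$, because a quantum group commutant isomorphism still intertwines the coproducts, and the preadjoint of a coproduct-intertwining map is always a homomorphism on the convolution algebra. So the obstruction is not where you locate it. The real issue is that the dual map $\hat\theta$ is constructed from the genuine quantum group isomorphism $\theta'=\Phi\theta:L^\infty(\G_1)\to L^\infty(\G_2')$ (with $\Phi(x)=Jx^*J$), so that $\hat\theta\lambda_2'=\lambda_1\theta'_*$, and one must then compute how the left-regular representation $\lambda_2'$ of $\G_2'$ relates to $\lambda_2$. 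The paper does this by a direct Hilbert-space calculation using $W_2'=(J\otimes J)W_2(J\otimes J)$, obtaining
\[
\lambda_2'\Phi_*^{-1}(\omega)=\hat R_2\bigl((\omega\otimes\iota)(W_2^*)\bigr)=\hat R_2\,\hat S_2^{-1}\,\lambda_2(\omega),
\]
which yields $\lambda_1\theta_*=\hat\theta\,\hat R_2\,\hat S_2^{-1}\,\lambda_2$; after that the argument is identical to Case~(\ref{thm:threeone}). Your proposed route through the antipodal identity $\Lambda(\mu)=\hat S(\Lambda(\mu^*)^*)$ does not obviously produce this relation: that identity lives entirely on one side and does not by itself encode the passage from $\lambda_2'$ to $\lambda_2$, which is where the factor $\hat R_2\hat S_2^{-1}$ actually originates. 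You need either the explicit $J$-calculation above or an equivalent computation identifying $\lambda_2'\Phi_*^{-1}$.
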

\begin{proof}
In this more general situation, the proof of Theorem~\ref{thm:two},
and Theorem~\ref{thm:five}, still give
the facts about $v$ and $u$, and yields $\theta$ such that
\[ T_*\big( \tilde\pi_{2,*}(u^*\omega) \big)
= \tilde\pi_{1,*}\big( \theta_*(\omega) \big)
\qquad (\omega\in L^1(\G_2)). \]

Suppose first that $\theta$ is a quantum group isomorphism.
Let $\hat\theta:L^\infty(\hat\G_2) \rightarrow L^\infty(\hat\G_1)$ be
the quantum group isomorphism induced by $\theta$, which satisfies
$\lambda_1 \theta_* = \hat\theta\lambda_2$.

Let $\omega\in L^1(\G_2)$ and $\mu\in A_2^*$.  There is $\omega'\in L^1(\G_2)$
with $\mu\tilde\pi_{2,*}(\omega) = \tilde\pi_{2,*}(\omega')$.  Then
\begin{align*} \Lambda_1(T_*(\mu)) \Lambda_1(T_*(\tilde\pi_{2,*}(\omega)))
&= \Lambda_1(T_*(\tilde\pi_{2,*}(\omega')))
= \lambda_1(\theta_*(u\omega'))
= \hat\theta(\lambda_2(u\omega'))
= \hat\theta(\hat\gamma_u(\lambda_2(\omega'))) \\
&= \hat\theta\hat\gamma_u\big( \Lambda_2(\mu) \lambda_2(\omega) \big)
= \hat\theta\hat\gamma_u\Lambda_2(\mu) \Lambda_1(T_*(\tilde\pi_{2,*}(\omega))),
\end{align*}
using that, similarly, $\Lambda_1(T_*(\tilde\pi_{2,*}(\omega)))
= \hat\theta(\hat\gamma_u(\lambda_2(\omega)))$.  As the set
\[ \{ \hat\gamma_u(\lambda_2(\omega)) : \omega\in L^1(\G_2) \} \]
is norm dense in $C_0(\hat\G_2)$, working in $M(C_0(\hat\G_2))$, we conclude
that
\[ \Lambda_1T_*(\mu) = \hat\theta \hat\gamma_u \Lambda_2(\mu)
\qquad (\mu\in A_2^*), \]
as required.

In the case when $\theta$ is a quantum group commutant isomorphism,
define $\Phi:L^\infty(\G_2) \rightarrow L^\infty(\G_2'); x\mapsto Jx^*J$,
and set $\theta' = \Phi\theta:L^\infty(\G_1) \rightarrow L^\infty(\G_2')$,
which is a quantum group isomorphism.
As in Section~\ref{sec:qgir} we find a quantum group isomorphism
$\hat\theta' : L^\infty((\G_2')\hat{\ }) \rightarrow L^\infty(\hat\G_1)$.
As $(\G_2')\hat{\ } = (\hat\G_2)^\op$, this gives a normal $*$-isomorphism
$\hat\theta:L^\infty(\hat\G_2) \rightarrow L^\infty(\hat\G_1)$ with
$\hat\Delta_1\hat\theta = \sigma(\hat\theta\otimes\hat\theta)\hat\Delta_2$.
Then $\hat\theta \lambda_2' = \lambda_1\theta'_* = \lambda_1\theta_*\Phi_*$.

We now calculate $\lambda_2'\Phi_*^{-1}$.
Let $\xi,\eta,\alpha,\beta\in L^2(\G)$, so
\begin{align*} \big( \lambda_2'\Phi_*^{-1}(\omega_{\xi,\eta})
   \alpha \big| \beta \big)
&= \big( \lambda_2'(\omega_{J\eta,J\xi}) \alpha \big| \beta \big)
= \big( W'_2(J\eta\otimes\alpha) \big| J\xi\otimes\beta \big) \\
&= \big( (J\otimes J)W_2(\eta\otimes J\alpha) \big| J\xi\otimes\beta \big)
=  \big( W_2^*(\xi\otimes J\beta) \big| \eta\otimes J\alpha\big) \\
&= \big( (\omega_{\xi,\eta}\otimes\iota)(W_2^*) J\beta \big| J\alpha \big)
= \big( J((\omega_{\xi,\eta}\otimes\iota)(W_2^*))^* J\alpha \big| \beta \big),
\end{align*}
using that $W_2' = (J\otimes J)W_2(J\otimes J)$.
With reference to the discussion before Lemma~\ref{lem:nine}, we see that
\[ \lambda_2'\Phi_*^{-1}(\omega) = \hat R_2\big( (\omega\otimes\iota)(W_2^*) \big)
= \hat R_2\big( \lambda_2(\omega^*)^* \big)
= \hat R_2 \hat S_2^{-1} \lambda_2(\omega) \qquad (\omega\in L^1(\G_2)). \]
In particular,
\[ \lambda_1 \theta_* = \hat\theta \hat R_2 \hat S_2^{-1} \lambda_2. \]

Finally, we follow the previous argument through.  So let $\omega,\omega'
\in L^1(\G_2)$ and $\mu\in A_2^*$ with $\mu \tilde\pi_{2,*}(\omega) =
\tilde\pi_{2,*}(\omega')$.  Then
\begin{align*} \Lambda_1(T_*(\mu)) \Lambda_1(T_*(\tilde\pi_{2,*}(\omega)))
&= \lambda_1(\theta_*(u\omega'))
= \hat\theta \hat R_2 \hat S_2^{-1} \hat\gamma_u \lambda_2(\omega')
= \hat\theta \hat R_2 \hat S_2^{-1} \hat\gamma_u
   \big( \Lambda_2(\mu) \lambda_2(\omega) \big) \\
&= \big(\hat\theta \hat R_2 \hat S_2^{-1} \hat\gamma_u \Lambda_2(\mu)\big)
\Lambda_1(T_*(\tilde\pi_{2,*}(\omega))),
\end{align*}
using that $\hat R_2 \hat S_2^{-1}$ is a homomorphism on $D(S_2^{-1})$.
This completes the proof.
\end{proof}

\small
\noindent
Matthew Daws\\
School of Mathematics,\\
University of Leeds,\\
LEEDS LS2 9JT\\
United Kingdom\\
Email: \texttt{matt.daws@cantab.net}

\smallskip

\noindent
Hung Le Pham\\
School of Mathematics, Statistics, and Operations Research,\\
Victoria University of Wellington,\\
Wellington 6012\\
New Zealand\\
Email: \texttt{hung.pham@vuw.ac.nz}


\begin{thebibliography}{99}
\small

\bibitem{bs} S. Baaj, G. Skandalis,
   ``Unitaires multiplicatifs et dualit\'e pour les produits crois\'es de C*-alg\`ebres'', 
   \emph{Ann. Sci. \'Ecole Norm. Sup.} 26 (1993) 425--488. 

\bibitem{bm} D.\,P. Blecher, C. Le Merdy,
   \emph{Operator algebras and their modules: an operator space approach.}
   London Mathematical Society Monographs. New Series, 30. Oxford Science Publications.
   (The Clarendon Press, Oxford University Press, Oxford, 2004).

\bibitem{daws1} M. Daws,
   ``Multipliers of locally compact quantum groups via Hilbert C$^*$-modules'',
  \emph{J. Lond. Math. Soc.} 84 (2011) 385--407.

\bibitem{daws} M. Daws,
   ``Multipliers, Self-Induced and Dual Banach Algebras'',
   \emph{Dissertationes Mathematicae} 470 (2010) 62pp.

\bibitem{des} J. De Canni{\`e}re, M. Enock, J.-M. Schwartz,
   ``Sur deux r\'esultats d'analyse harmonique non-commutative: une
              application de la th\'eorie des alg\`ebres de {K}ac'',
   \emph{J. Operator Theory} 5 (1981) 171--194.

\bibitem{es} M. Enock, J.-M. Schwartz,
   \emph{Kac Algebras and Duality of Locally Compact Groups}
   (Springer-Verlag, Berlin, 1992)

\bibitem{jnr} M. Junge, M. Neufang, Z.-J. Ruan,
   ``A representation theorem for locally compact quantum groups'',
   \emph{Internat. J. Math.} 20 (2009) 377--400.

\bibitem{kad} R.\,V. Kadison,
   ``Isometries of operator algebras'',
   \emph{Ann. Of Math.} 54 (1951) 325--338. 

\bibitem{kt} M. Kalantar, ``Towards harmonic analysis on locally compact
   quantum groups'', Ph.D. thesis, Carleton University, Ottawa, 2010.

\bibitem{kn} M. Kalantar, M. Neufang, ``From quantum groups to groups'',
   preprint, see arXiv:1110.5129v1.

\bibitem{kusun} J. Kustermans,
   ``Locally compact quantum groups in the universal setting'',
   \emph{Internat. J. Math.} 12 (2001), no. 3, 289--338.

\bibitem{kusbook} J. Kustermans,
   ``Locally compact quantum groups'' in
   \emph{Quantum independent increment processes. {I}},
   Lecture Notes in Math. 1865, pp. 99--180
   (Springer, Berlin, 2005).

\bibitem{kusvn} J. Kustermans, S. Vaes,
   ``Locally compact quantum groups in the von {N}eumann algebraic setting'',
   \emph{Math. Scand.} 92 (2003) 68--92.

\bibitem{kv} J. Kustermans, S. Vaes,
   ``Locally compact quantum groups'',
   \emph{Ann. Sci. \'Ecole Norm. Sup. (4)} 33 (2000) 837--934.

\bibitem{ks} D. Kyed, P.\, M. So{\l}tan,
   ``Property (T) and exotic quantum group norms'',
   to appear in \emph{J. Noncommutative Geometry},
   see arXiv:1006.4044v1 [math.OA]

\bibitem{mas} T. Masuda, Y. Nakagami, S.\,L. Woronowicz,
   ``A {$C\sp \ast$}-algebraic framework for quantum groups'',
   \emph{Internat. J. Math.} 14 (2003) 903--1001.

\bibitem{runde} V. Runde,
   ``Applications of operator spaces to abstract harmonic analysis'',
   \emph{Expo. Math.} 22 (2004) 317--363.

\bibitem{rundebook} V. Runde,
   ``Lectures on Amenability'',
   Lecture Notes in Mathematics, 1774.
   (Springer-Verlag, Berlin, 2002).

\bibitem{sto} E. St{\o}rmer,
   ``On the {J}ordan structure of {$C^{\ast} $}-algebras'',
   \emph{Trans. Amer. Math. Soc.} 120 (1965) 438--447.

\bibitem{tak2} M. Takesaki,
   \emph{Theory of operator algebras. II.}
   Encyclopaedia of Mathematical Sciences, 125.
   Operator Algebras and Non-commutative Geometry, 6.
   (Springer-Verlag, Berlin, 2003).

\bibitem{vaes} S. Vaes,
   ``Locally compact quantum groups'',
   PhD. thesis, Katholieke Universiteit Leuven, 2001.
   Available from \texttt{http://wis.kuleuven.be/analyse/stefaan/}

\bibitem{vv} S. Vaes, L. Vainerman,
   ``Extensions of locally compact quantum groups and the bicrossed
              product construction'',
   \emph{Adv. Math.} 175 (2003) 1--101.

\bibitem{vd} A. Van Daele,
   ``Locally compact quantum groups. A von Neumann algebra approach.''
   See arXiv:math/0602212v1 [math.OA]

\bibitem{wal} M.\, E. Walter,
   ``{$W^{\ast} $}-algebras and nonabelian harmonic analysis'',
   \emph{J. Functional Analysis} 11 (1972) 17--38.

\bibitem{wen} J.\,G. Wendel,
   ``On isometric isomorphism of group algebras'',
   \emph{Pacific J. Math.} 1 (1951) 305--311.

\bibitem{woro} S.\,L. Woronowicz,
   ``From multiplicative unitaries to quantum groups'',
   \emph{Internat. J. Math.} 7 (1996) 127--149. 

\end{thebibliography}
\end{document}